\documentclass[12pt]{article}

\usepackage[top=2.5cm, bottom=3cm, left=3cm, right=3cm]{geometry} 

\usepackage{amsmath, amssymb, amscd, amsthm}
\usepackage{mathrsfs} 

\usepackage{graphicx}
\usepackage{float}
\usepackage{tikz}
\usetikzlibrary{cd, intersections, calc, arrows.meta}

\usepackage[title]{appendix} 
\usepackage[hidelinks]{hyperref}

\newtheorem{theorem}{Theorem}[section]
\newtheorem{lemma}[theorem]{Lemma}
\newtheorem{proposition}[theorem]{Proposition}
\newtheorem{cor}[theorem]{Corollary}

\theoremstyle{definition}
\newtheorem{definition}[theorem]{Definition}
\newtheorem{example}[theorem]{Example}
\newtheorem{remark}[theorem]{Remark}

\newtheorem{question}[theorem]{Question}

\numberwithin{equation}{section} 

\newcommand{\B}{\mathbb{B}}
\newcommand{\C}{\mathbb{C}}

\newcommand{\F}{\mathbb{F}}
\newcommand{\K}{\mathbb{K}}
\newcommand{\M}{\mathbb{M}}
\newcommand{\N}{\mathbb{N}}

\newcommand{\R}{\mathbb{R}}

\newcommand{\cI}{\mathcal{I}} \newcommand{\cJ}{\mathcal{J}}

\newcommand{\cO}{\mathcal{O}} \newcommand{\cP}{\mathcal{P}}

\newcommand{\cX}{\mathcal{X}}
 \newcommand{\cZ}{\mathcal{Z}}

\DeclareMathOperator{\diag}{diag}

\DeclareMathOperator{\Lip}{Lip}

\DeclareMathOperator{\Tr}{Tr}

\renewcommand{\Re}{\operatorname{Re}}
\renewcommand{\Im}{\operatorname{Im}}
\newcommand{\inpr}[2]{\langle{#1,#2}\rangle}

\title{A family of matrix flows converging to normal matrices}

\newcommand{\email}[1]{\href{mailto:#1}{\texttt{#1}}}
\title{A family of matrix flows converging to normal matrices}

\author{Masaki Izumi
	\thanks{Supported in part by JSPS KAKENHI Grant Number 25K00912}\\
	Graduate School of Science \\
	Kyoto University \\
	Sakyo-ku, Kyoto 606-8502, Japan \\
	\email{izumi@math.kyoto-u.ac.jp}}

\begin{document} 
\maketitle
\begin{abstract} 
The celebrated Antezana-Pujals-Stojanoff Theorem states that the iterated Aluthge transforms of 
an arbitrary matrix converge to a normal matrix.
We introduce a family of matrix flows that share this convergence property 
by defining them through ordinary differential equations. 
The family includes a continuous analogue of the Aluthge transform, 
as well as a differential equation discussed by Haagerup in the context of II$_1$ factors. 
We also examine the same type of flows in the setting of Hilbert space operators equipped with unitarily invariant norms. 
\end{abstract}

\section{Introduction} 
This work stems from the study of two examples of operator transformations. 
One of them is the Aluthge transform introduced in \cite{A1990}.  
Throughout the paper, we denote the algebra of bounded operators on a Hilbert space $H$ by $\B(H)$. 
For $T\in \B(H)$, let $T=U|T|$ be its polar decomposition. 
Then the Aluthge transform of $T$, denoted by $\Delta(T)$, is defined by $|T|^{1/2}U|T|^{1/2}$.  
Thanks to Antezana-Pujals-Stojanoff's work \cite[Theorem 4.12]{APS2011}, 
the iterated Aluthge transforms of any matrix converge to a normal matrix. 
For a parameter $0<\lambda<1$, one can similarly define the $\lambda$-Aluthge transform
\[\Delta_\lambda(T)=|T|^\lambda U|T|^{1-\lambda},\] 
and the convergence result holds for $\Delta_\lambda$ as well (see \cite[Theorem 6.1]{APS2011}). 
For a left invertible $T$, we have 
\begin{equation*}
\Delta_\lambda(T)=e^{\lambda \log |T|}Te^{-\lambda \log |T|}=T+\lambda [\log |T|,T]+O(\lambda^2),
\end{equation*}
as $\lambda$ tends to 0. 
Thus it is natural to consider the following ODE as a continuous analogue of the Aluthge transform: 
\begin{equation}\label{cAluthge}
	\frac{dX(t)}{dt}=[\log |X(t)|,X(t)],\quad X(0)=T.
\end{equation}
In fact, we can show that $\{\Delta_{t/n}^n(T)\}_{n}$ converges to $X(t)$ in the operator norm 
(see Proposition \ref{Aluthge}). 

The second example comes from an ODE introduced by Haagerup in his series of lectures 
at MSRI in 2001 (see \cite[Section 2]{HS2009} for a published account).  
For an element $T$ in a II$_1$ factor $M$, he considered the following ODE, which we call 
the \textit{Haagerup equation}:   
\begin{equation}\label{HaagerupODE}
\frac{dX(t)}{dt}=[[X(t)^*,X(t)],X(t)],\quad X(0)=T.
\end{equation}
He showed that  a unique global solution exists on $[0,\infty)$, and that it remains in the similarity orbit 
$\cO_M(T)$ of $T$ in $M$. 
Moreover, the flow $\{X(t)\}_{t\geq 0}$ converges to a normal element in the $*$ distribution sense. 
The above ODE itself makes sense in $\B(H)$, and Haagerup's argument works for any finite von Neumann algebras
---in particular, for matrix algebras. 
It turns out that $X(t)$ in the matrix case is precisely the gradient flow for 
the energy function $\Tr([X^*,X]^2)/4$ (see Proposition \ref{gradient}). 
A similarity between the Aluthge transform and the Haagerup equation was 
already observed in \cite{DS2009}.

These two examples can be unified in the following framework. 
Let $\varphi=(\varphi_1,\varphi_2)$ be a pair of continuous increasing functions on a finite closed 
interval $[a,b]\subset [0,\infty)$ such that $\varphi_1+\varphi_2$ is strictly increasing. 
For $T\in \B(H)$ with $\sigma(|T|),\sigma(|T^*|)\subset [a,b]$, we set 
\[\varphi(T)=\varphi_1(|T|)-\varphi_2(|T^*|),\] 
where $\sigma(S)$ denotes the spectrum of an operator $S$. 
The main subject of this paper is the solution of the following ODE: 
\begin{equation}\label{ODE}
\frac{dX(t)}{dt}=[\varphi(X(t)),X(t)],\quad X(0)=T.\end{equation}
Note that $(\varphi_1(x),\varphi_2(x))=(\log x,0)$ in the first example, and 
$(\varphi_1(x),\varphi_2(x))=(x^2,x^2)$ in the second.

In the matrix case, it turns out that the usual Lipschitz condition on $\varphi_1$ and $\varphi_2$ 
assures not only a unique local solution, but a unique global solution on $[0,\infty)$ 
(see Theorem \ref{exi&uni}), and the flow $F^\varphi_t(T):=X(t)$ is well-defined. 
The solution curve $\{F^\varphi_t(T)\}_{t\geq0}$ stays in the similarity orbit $\cO(T)$ of $T$ 
because $[\varphi(X),X]$ defines a vector field on a subset of the manifold $\cO(T)$, and 
$\{F^\varphi_t(T)\}_{t\geq0}$ is its integral curve. 
One of the main purposes of this paper is to show that the flow $\{F^\varphi_t(T)\}_{t\geq 0}$ converges to a normal matrix for 
any matrix $T$ under certain regularity assumptions of $\varphi_1$ and $\varphi_2$---
assumptions satisfied in both of the above examples (see Theorem \ref{exponential}, Theorem \ref{Cn-1}, Theorem \ref{holomorphic}, and 
Theorem \ref{noninvertible} for the precise assumptions).  

To show the convergence results, we have an advantage of our continuous systems over 
the iterated Aluthge transforms: we can separate the dynamics into that of 
upper triangular matrices and that of unitary matrices.  
(Thanks to the unitary invariance of Eq.(\ref{ODE}), we may always assume that the initial value 
$T$ is upper triangular). 
In fact, unlike in \cite{APS2007}, we do not need the sophisticated theory of dynamical systems; 
instead, ordinary Lyapunov analysis together with matrix analysis suffices.

This paper is organized as follows. 
We discuss only matrices up to Section 3, while general bounded operators on Hilbert spaces are treated  
only in Section 4. 
This arrangement is intended to allow readers interested only in the matrix case to follow the paper easily. 

In Section 2, we set up the basic assumptions.  
Following Haagerup's argument, we show that the operator norm of $X(t)$ is decreasing as long as 
the solution exists. 
This, together with the Lipschitz condition, guarantees the existence of a unique global solution. 

In Section 3, we first show exponential convergence under the $C^1$-condition on $\varphi$ for a diagonalizable initial value  
by a simple Lyapunov function argument (Theorem \ref{exponential}).  
Note that in the case of iterated Aluthge transforms, the corresponding result was already established in \cite{APS2007}. 
An intuitive explanation of these two phenomena is straightforward: 
the convergence occurs within the similarity orbit of the initial value if and only if the initial value is diagonalizable. 
To obtain more general convergence results, we require a more refined estimate of the Lyapunov function. 
For this purpose, we use interpolation polynomials in Theorem \ref{Cn-1}, holomorphic functional calculus 
in Theorem \ref{holomorphic}, and a combination of both in Theorem \ref{noninvertible}.  
Each case requires a different regularity assumption.

In Section 4, we discuss general Hilbert space operators equipped with a unitarily invariant norm.  
More precisely, we consider solutions of Eq.(\ref{ODE}) with $X(t)-T$ in a symmetric normed ideal 
under the condition that $[\varphi(T),T]$ belongs to the same ideal. 
We have no convergence theorem in the infinite dimensional case, and the main goal of this section is to prove  
the existence of a unique global solution under an appropriate Lipschitz condition.  
This again follows from the fact that the operator norm of the solution is decreasing, but the Haagerup argument 
no longer applies. 
Instead, we take an alternative approach using the Dini derivative. 
We show both convergent examples and non-convergent ones, and discuss several questions.  

Throughout the paper, for a matrix or a bounded operator $T$, the symbol $\|T\|$ denotes its operator norm. 
The Schatten $p$-norm is denoted by $\|T\|_p$ for $1\leq p\leq \infty$. 
For matrices, we mainly use the Hilbert--Schmidt norm and the operator norm. 
We denote by $\sigma(T)$ the spectrum of $T$, and by $r(T)$ the spectral radius of $T$. 
The similarity orbit of $T$ is denoted by $\cO(T)$ in the matrix case. 
For Hilbert space operators, there is an issue with the choice of 
a subgroup of the group of bounded invertible operators $\B(H)^{-1}$ to define the similarity orbit of $T$, 
and we therefore use a more descriptive notation in Section 4.  
   
\section{The basic properties of the solutions}
For the basics of matrix analysis, the reader is referred to \cite{B1997},\cite{HP2014}, and \cite{HJ1991}.  

We denote by $\M_n$ the set of $n$-by-$n$ complex matrices, and by $GL(n,\C)$ the group of its invertible elements. 
For $T$, we denote its singular numbers by $\{s_i(T)\}_{i=1}^n$.

\begin{definition} For a pair of continuous functions $\varphi=(\varphi_1,\varphi_2)$ defined 
on a finite closed interval $[a,b]\subset [0,\infty)$, we set up the following conditions: 
\begin{itemize}
\item[(C0)] $\varphi_1$ and $\varphi_2$ are increasing, and $\varphi_1+\varphi_2$ 
is strictly increasing.  
\item[(CL)] In addition to (C0), we assume $\varphi_1, \varphi_2\in \Lip[a,b]$, where 
$\Lip[a,b]$ is the set of Lipschitz functions on $[a,b]$. 
\item[(C1)] In addition to (CL), we assume $\varphi_1, \varphi_2\in C^1[a,b]$ and 
$\varphi_1'(x)+\varphi_2'(x)>0$ for all $x\in [a,b]\setminus \{0\}$. 
\end{itemize}
\end{definition}

We set 
\[\M_n[a,b]=\{X\in \M_n;\; \sigma(|X|)\subset [a,b]\},\] 
and $\varphi(X)=\varphi_1(|X|)-\varphi_2(|X^*|)$ for $X\in \M_n[a,b]$. 
Note that $X\in \M_n[a,b]$ implies $\sigma(|X^*|)\subset [a,b]$ for matrices. 

\begin{remark}
The map $\varphi:\M_n[a,b]\to \M_n$ is continuous under the condition (C0), which can be  
shown by polynomial approximations.  
Furthermore, it is Lipschitz continuous under the condition (CL), which follows from 
\[
\left(
\begin{array}{cc}
|T| & 0 \\
0 &|T^*| 
\end{array}
\right)=
\left|
\left(
\begin{array}{cc}
0 &T^*  \\
T &0 
\end{array}
\right)
\right|\]
and \cite[Theorem 3.5.1]{AP2016}. 
\end{remark}

The next lemma follows from the rearrangement inequality. 

\begin{lemma}\label{rearrangement}
Let $A\in \M_n$ be a self-adjoint matrix, and let $f$ be an increasing function on $\sigma(A)$. 
Let $U\in \M_n$ be a unitary matrix. 
Then 
\[\Tr(f(A)(A-UAU^*))\geq 0,\]
and equality holds if and only if $U$ commutes with $f(A)$. 	
\end{lemma}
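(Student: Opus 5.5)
Diagonalize $A$ and reduce everything to a statement about a doubly stochastic matrix. Write $A=\sum_i a_i P_i$ with an orthonormal eigenbasis $\{e_i\}$, $Ae_i=a_ie_i$, and order the eigenvalues so that $a_1\geq a_2\geq\cdots\geq a_n$. Since $f$ is increasing on $\sigma(A)$, we get $f(a_1)\geq\cdots\geq f(a_n)$. Then
\[
\Tr(f(A)A)=\sum_i f(a_i)a_i,\qquad \Tr(f(A)UAU^*)=\sum_{i,j} f(a_i)\,|\langle Ue_j,e_i\rangle|^2 a_j .
\]
The matrix $D=(d_{ij})$ with $d_{ij}=|u_{ij}|^2$ (where $u_{ij}=\langle Ue_j,e_i\rangle$) is unistochastic, hence doubly stochastic. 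So the claim becomes
\[
\sum_i f(a_i)a_i\;\geq\;\sum_{i,j} f(a_i)d_{ij}a_j .
\]

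By Birkhoff's theorem, $D$ is a convex combination of permutation matrices. For each permutation $\pi$, the rearrangement inequality gives $\sum_i f(a_i)a_{\pi(i)}\leq \sum_i f(a_i)a_i$, because both sequences are similarly ordered. Averaging over the convex combination gives the inequality.

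The equality case is the main obstacle. Sufficiency is easy: if $U$ commutes with $f(A)$, then by cyclicity $\Tr(f(A)UAU^*)=\Tr(U^*f(A)UA)=\Tr(f(A)A)$.

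For necessity I would avoid Birkhoff and use a summation-by-parts identity that exhibits the difference as a sum of nonnegative terms. Group the indices by the distinct values $c_1>\cdots>c_m$ of $f$ on $\sigma(A)$. Let $Q_k$ be the spectral projection of $f(A)$ onto $\{f=c_k\}$, and set $E_k=Q_1+\cdots+Q_k$; each $E_k$ is a spectral projection of $A$ onto an upper set of eigenvalues. Then $f(A)=\sum_{k<m}(c_k-c_{k+1})E_k + c_m I$, so
\[
\Tr(f(A)(A-UAU^*))=\sum_{k=1}^{m-1}(c_k-c_{k+1})\,\Tr\bigl(E_k(A-UAU^*)\bigr).
\]
Each $\Tr(E_kA)$ is the sum of the top $\operatorname{rank}E_k$ eigenvalues of $A$. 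By Ky Fan's maximum principle, $\Tr(E_kUAU^*)=\Tr(U^*E_kU\,A)$ is at most that sum. Hence every term is nonnegative, and this reproves the inequality.

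If equality holds, every term vanishes, since $c_k-c_{k+1}>0$. So $U^*E_kU$ attains the Ky Fan maximum for each $k$. The key step, and the one needing care, is showing that attainment forces $U^*E_kU=E_k$. This holds because $E_k$ has range equal to a full spectral subspace of $A$ cut at a gap between distinct eigenvalues: $E_k$ cuts where $f$ jumps, and since $f$ is increasing, $f$ changes value only between distinct eigenvalues, so the top block is never split. To prove the attainment claim, write $P=U^*E_kU$ and compute $\Tr(E_kA)-\Tr(PA)=\Tr\bigl((E_k-P)(A-\lambda I)\bigr)$, where $\lambda$ lies strictly between the smallest eigenvalue in $E_k$ and the next one. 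This equals $\Tr\bigl(E_k(I-P)E_k(A-\lambda)\bigr)+\Tr\bigl((I-E_k)P(I-E_k)(\lambda-A)\bigr)$. Both terms are positive unless $(I-P)E_k=0$ and $P(I-E_k)=0$, that is, unless $P=E_k$. Therefore $U$ commutes with every $E_k$, hence with every $Q_k$, and so with $f(A)$.
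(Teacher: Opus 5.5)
Your proof is correct, but your equality case takes a genuinely different route from the paper's.

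The paper uses a single Birkhoff decomposition $|\inpr{Uv_j}{v_i}|^2=\sum_{\sigma}c(\sigma)\delta_{i,\sigma(j)}$ for both parts. It does Abel summation in the eigenvalues of $A$, using the gaps $\mu_i=\lambda_i-\lambda_{i+1}$. This reduces the trace to nonnegative partial sums $\sum_{j\le i}(f(\lambda_j)-f(\lambda_{\sigma(j)}))$. It then argues combinatorially that every permutation with $c(\sigma)>0$ preserves the level sets of $f$, taking the last index $i'$ of an eigenvalue block so that $\mu_{i'}>0$. Hence $\inpr{Uv_j}{v_i}=0$ whenever $f(\lambda_i)\neq f(\lambda_j)$.

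You perform the dual Abel summation, in the values $c_1>\cdots>c_m$ of $f$. This gives $f(A)=\sum_{k<m}(c_k-c_{k+1})E_k+c_mI$, and the problem reduces to comparing $\Tr(E_kA)$ with $\Tr(PA)$ for the equal-rank projection $P=U^*E_kU$. Your identity $\Tr((E_k-P)(A-\lambda))=\Tr(E_k(I-P)E_k(A-\lambda))+\Tr((I-E_k)P(I-E_k)(\lambda-A))$ holds because $\Tr E_k=\Tr P$ and $E_k$ commutes with $A$. Both terms are nonnegative and vanish only if $(I-P)E_k=0$ and $P(I-E_k)=0$, i.e. $P=E_k$. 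That identity already proves the Ky Fan bound in the case you need, so neither Ky Fan's principle nor Birkhoff's theorem is actually required, and your opening rearrangement paragraph is redundant.

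Your route gives a basis-free argument using only spectral projections and the trace. Its equality mechanism is transparent: strict positivity on either side of a spectral gap. It should also adapt more readily to tracial settings such as the finite von Neumann algebras mentioned in the paper. The paper's route is more combinatorial and tied to finite dimensions, but it gets the inequality and its equality case from a single decomposition.
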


\begin{proof}
We choose an orthonormal basis $\{v_i\}_{i=1}^n$ of $\C^n$ consisting of eigenvectors of $A$ 
with $Av_i=\lambda_i v_i$, and assume $\lambda_1\geq \lambda_2\geq\cdots \geq \lambda_n$.  
Then 
\begin{align*}
\Tr(f(A)(A-UAU^*)) &=\sum_{i=1}^n f(\lambda_i)(\lambda_i-\inpr{UAU^*v_i}{v_i}) \\
	&=\sum_{i=1}^n f(\lambda_i)(\lambda_i-\sum_{j=1}^n\lambda_j|\inpr{Uv_j}{v_i}|^2).
\end{align*}
Since  $(|\inpr{Uv_j}{v_i}|^2)_{ij}$ is a doubly stochastic matrix, it is a convex combination 
of permutation matrices, and there exists $c:S_n\to [0,1]$ with 
\[\sum_{\sigma\in S_n}c(\sigma)=1,\]
\[|\inpr{Uv_j}{v_i}|^2=\sum_{\sigma\in S_n}c(\sigma)\delta_{i,\sigma(j)},\]
where $S_n$ is the symmetric group (see \cite[Theorem II.2.3]{B1997}). 
Thus 
\[\Tr(f(A)(A-UAU^*))=\sum_{\sigma\in S_n}c(\sigma)
\sum_{j=1}^n \lambda_j(f(\lambda_j)-f(\lambda_{\sigma(j)})).\] 
Letting 
\[\mu_j=\begin{cases}
	\lambda_j-\lambda_{j+1},  & 1\leq j\leq n-1, \\
	\lambda_n, &j=n,
\end{cases}\]
we get 
\[\Tr(f(A)(A-UAU^*))=\sum_{\sigma\in S_n}c(\sigma)\sum_{i=1}^n 
\mu_i\sum_{j=1}^i(f(\lambda_j)-f(\lambda_{\sigma(j)}))\geq 0.\]

Assume that equality holds. 
We claim that if there exists $i$ with $f(\lambda_{\sigma(i)})<f(\lambda_i)$, we get $c(\sigma)=0$. 
Indeed, for such $i$ we choose the largest number $i'$ with $\lambda_i=\lambda_{i'}$. 
Then $\mu_{i'}=\lambda_{i'}-\lambda_{i'+1}>0$, and 
\[\mu_{i'}\sum_{j=1}^{i'}(f(\lambda_j)-f(\lambda_{\sigma(j)}))>0,\]
which implies $c(\sigma)=0$. 
The claim shows that $c(\sigma)>0$ necessitates $f(\lambda_{\sigma(j)})=f(\lambda_j)$ for all $j$, 
implying that $U$ commutes with $f(A)$. 
\end{proof}

\begin{lemma}\label{positive semi-definite}
Let $\varphi=(\varphi_1,\varphi_2)$ be a pair of functions satisfying the condition (C0), 
and assume $T\in \M_n[a,b]$. 
Then 
\[\Tr(\varphi(T)[T^*,T])\geq 0,\]
and equality holds if and only if $T$ is normal. 
In particular $[\varphi(T),T]=0$ holds if and only if $T$ is normal. 
\end{lemma}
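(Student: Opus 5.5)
Write $T=U|T|$ with $U$ unitary, which is possible since we are in $\M_n$. Then $|T^*|=U|T|U^*$, and hence $TT^*=U|T|^2U^*$ and $T^*T=|T|^2$. Put $A=|T|$ and $B=|T^*|=UAU^*$. Then
\[[T^*,T]=T^*T-TT^*=A^2-B^2,\]
and
\[\Tr(\varphi(T)[T^*,T])=\Tr(\varphi_1(A)(A^2-UA^2U^*))+\Tr(\varphi_2(B)(B^2-U^*B^2U)).\]
The second term comes from $-\Tr(\varphi_2(B)(A^2-B^2))$ together with $A^2=U^*B^2U$.

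I will apply Lemma \ref{rearrangement} to each term, with the self-adjoint matrix taken to be $A^2$ (respectively $B^2$). The functions are $f=\varphi_1\circ\sqrt{\cdot}$ (respectively $\varphi_2\circ\sqrt{\cdot}$). These are increasing on $\sigma(A^2)\subset[a^2,b^2]$, and they satisfy $f(A^2)=\varphi_1(A)$. The lemma is applied with unitary $U$ in the first term and $U^*$ in the second. Both terms are then $\geq 0$, which gives the inequality.

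For the equality case, suppose the trace is $0$. Then both terms vanish, so by Lemma \ref{rearrangement} $U$ commutes with $\varphi_1(A)$, and $U^*$ commutes with $\varphi_2(B)$. The second statement is equivalent to $U$ commuting with $U^*\varphi_2(B)U=\varphi_2(A)$. Hence $U$ commutes with $(\varphi_1+\varphi_2)(A)$.

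This is the main point: since $\varphi_1+\varphi_2$ is strictly increasing, it is injective on $\sigma(A)$. So $A$ is a continuous function of $(\varphi_1+\varphi_2)(A)$, via the inverse function on the finite set of its eigenvalues, or simply by an interpolating polynomial. Thus $U$ commutes with $A$. Then
\[TT^*=UA^2U^*=A^2=T^*T,\]
so $T$ is normal. Conversely, if $T$ is normal then $[T^*,T]=0$ and the trace vanishes trivially.

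For the last claim, if $T$ is normal then $|T|=|T^*|$ is a function of $T^*T$, which commutes with $T$. So $\varphi(T)$ commutes with $T$. Conversely, suppose $[\varphi(T),T]=0$. Since $\varphi(T)$ is self-adjoint, taking adjoints gives $[\varphi(T),T^*]=0$ as well. Then
\[\Tr(\varphi(T)[T^*,T])=\Tr(\varphi(T)T^*T)-\Tr(\varphi(T)TT^*).\]
Because $\varphi(T)$ commutes with $T^*$, we have $\Tr(\varphi(T)T^*T)=\Tr(T^*\varphi(T)T)=\Tr(\varphi(T)TT^*)$ by cyclicity. So the trace equals $0$, and the equality case shows that $T$ is normal.

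The only delicate step is the passage from "$U$ commutes with $\varphi_1(A)$ and with $\varphi_2(A)$" to "$U$ commutes with $A$". The individual $\varphi_i$ need not be injective, so the argument genuinely needs the strict monotonicity of their sum.
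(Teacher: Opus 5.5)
Your proof is correct and follows the same route as the paper. Both arguments use the unitary polar decomposition $T=U|T|$ and rewrite the two terms in the form of Lemma \ref{rearrangement}, applied with $\varphi_i\circ\sqrt{\cdot}$ and the unitaries $U$ and $U^*$. Both then invoke strict monotonicity of $\varphi_1+\varphi_2$ in the equality case, and use cyclicity of the trace for the final claim. You also make explicit several steps the paper leaves implicit: the change of variable $x\mapsto\sqrt{x}$, turning commutation with $U^*$ into commutation with $U$, and the converse direction of the last claim.
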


\begin{proof} Let $T=U|T|$ be the polar decomposition. 
We may and do assume that $U$ is a unitary by adding a partial isometry to $U$ if necessary. 
Then,  
\begin{align*}
\Tr(\varphi(T)[T^*,T])&=\Tr(\varphi_1(|T|)(|T|^2-|T^*|^2)+\varphi_2(|T^*|)(|T^*|^2-|T|^2)) \\
&=\Tr(\varphi_1(|T|)(|T|^2-U|T|^2U^*)+\varphi_2(|T|)(|T|^2-U^*|T|^2U)).  
\end{align*}
The previous lemma shows $\Tr(\varphi(T)[T^*,T])\geq 0$, and equality holds if and only if 
$U$ commutes with both $\varphi_1(|T|)$ and $\varphi_2(|T|)$. 
Since $\varphi_1+\varphi_2$ is strictly increasing, it is the case if and only if $U$ commutes 
with $|T|$, which is equivalent to $T$ being normal. 

The second statement follows from 
\[\Tr(T^*[\varphi(T),T])=-\Tr(\varphi(T)[T^*,T])).\] 
\end{proof}

\begin{lemma}\label{decreasing}
Let $\varphi=(\varphi_1,\varphi_2)$ be a pair of functions satisfying the condition (C0), 
and assume $T\in \M_n[a,b]$.  
Let $X(t)$ be a solution of Eq.(\ref{ODE}) defined on $[0,\tau]$. 
Then $\|X(t)\|_{2n}$ for $n\in \N$ and $\|X(t)\|$ are decreasing with respect to $t$ on $[0,\tau]$.  
\end{lemma}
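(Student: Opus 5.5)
The plan is to differentiate $\Tr\bigl((X(t)^*X(t))^m\bigr)=\|X(t)\|_{2m}^{2m}$ in $t$ and show the derivative is nonpositive. For this I will use the same rearrangement mechanism as in Lemma \ref{positive semi-definite}, with $|X|^2$ replaced by $|X|^{2m}$. The operator norm case will then follow by letting $m\to\infty$. To avoid a clash with the matrix size, I write $m$ for the exponent in the statement.

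First, the differentiation. Since $X(t)$ solves Eq.(\ref{ODE}), it is $C^1$ and $X(t)\in\M_n[a,b]$ on $[0,\tau]$, so $\varphi(X(t))$ is defined and self-adjoint. Write $\Phi=\varphi(X(t))$. Then $\frac{d}{dt}X^*=[\Phi,X]^*=[X^*,\Phi]$, and hence
\[
\frac{d}{dt}(X^*X)=2X^*\Phi X-\Phi X^*X-X^*X\Phi .
\]
Using cyclicity of the trace, this gives
\[
\frac{d}{dt}\Tr\bigl((X^*X)^m\bigr)=m\Tr\bigl((X^*X)^{m-1}\tfrac{d}{dt}(X^*X)\bigr)
=2m\Tr\bigl(\Phi\,(|X^*|^{2m}-|X|^{2m})\bigr).
\]
Here I use $X(X^*X)^{m-1}X^*=(XX^*)^m$. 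This trace bookkeeping is the only computational step and I expect it to be routine.

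Second, the sign. Take the polar decomposition $X=U|X|$ with $U$ unitary, as in the proof of Lemma \ref{positive semi-definite}. Then $|X^*|^{2m}=U|X|^{2m}U^*$ and $\varphi_2(|X^*|)=U\varphi_2(|X|)U^*$. Consequently
\[
\Tr\bigl(\Phi(|X|^{2m}-|X^*|^{2m})\bigr)=\Tr\bigl(\varphi_1(|X|)(|X|^{2m}-U|X|^{2m}U^*)\bigr)+\Tr\bigl(\varphi_2(|X|)(|X|^{2m}-U^*|X|^{2m}U)\bigr).
\]
I apply Lemma \ref{rearrangement} with $A=|X|^{2m}$ and with $f(\lambda)=\varphi_i(\lambda^{1/2m})$, which is increasing on $\sigma(A)$ by (C0). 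The lemma is applied with unitary $U$ in the first term and $U^*$ in the second. Both terms are then $\geq 0$, so the derivative above is $\leq 0$. Hence $\|X(t)\|_{2m}$ is decreasing on $[0,\tau]$.

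Finally, for the operator norm, I use the finite-dimensional inequality $\|Y\|\le\|Y\|_{2m}\le n^{1/2m}\|Y\|$ for $Y\in\M_n$. It gives $\|X(t)\|=\lim_{m\to\infty}\|X(t)\|_{2m}$ pointwise in $t$, and a pointwise limit of decreasing functions is decreasing. The main point needing care is the identification of the derivative with a trace against $|X^*|^{2m}-|X|^{2m}$. Once that is in hand, the sign comes directly from Lemma \ref{rearrangement}.
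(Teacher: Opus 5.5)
Your proposal is correct and follows the paper's proof essentially verbatim: you differentiate $\Tr((X^*X)^m)$ to get $-2m\Tr(\varphi(X)(|X|^{2m}-|X^*|^{2m}))$, split this into the $\varphi_1$ and $\varphi_2$ parts, obtain nonnegativity of each from Lemma \ref{rearrangement}, and let $m\to\infty$ for the operator norm. Your explicit choice $A=|X|^{2m}$, $f(\lambda)=\varphi_i(\lambda^{1/2m})$ with the unitary polar factor (as in Lemma \ref{positive semi-definite}) spells out what the paper compresses into ``a direct computation with Lemma \ref{rearrangement},'' and renaming the exponent to $m$ avoids the paper's clash with the matrix size $n$.
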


\begin{proof}
We follow the argument in \cite[Section 2]{HS2009}. 
A direct computation with Lemma \ref{rearrangement} shows
\begin{align*}
	\frac{d\|X(t)\|_{2n}^{2n}}{dt} &=\frac{d\Tr((X(t)^*X(t))^{n})}{dt}\\
	&=n\Tr((X(t)^*X(t))^{n-1}(X'(t)^*X(t)+X(t)^*X'(t)))\\
	&=2n\Re \Tr((X(t)^*X(t))^{n-1}X(t)^*[\varphi(X(t)),X(t)])\\
	&=-2n\Tr(\varphi(X(t))(|X(t)|^{2n}-|X(t)^*|^{2n}))\\
	&=-2n\Tr(\varphi_1(|X(t)|)(|X(t)|^{2n}-|X(t)^*|^{2n}))\\
	&-2n\Tr(\varphi_2(|X(t)^*|)(|X(t)^*|^{2n}-|X(t)|^{2n}))\\
	&\leq 0,
\end{align*}
and $\|X(t)\|_{2n}$ is decreasing on $[0,\tau]$ for every $n\in \N$. 
As 
\[\|X(t)\|=\lim_{n\to\infty}\|X(t)\|_{2n},\]
$\|X(t)\|$ is decreasing too. 
\end{proof}

\begin{cor}\label{increasing}
Let $\varphi=(\varphi_1,\varphi_2)$ be a pair of functions satisfying the condition (C0), 
and assume $a>0$ and $T\in \M_n[a,b]$. 
Let $X(t)$ be a solution of Eq.(\ref{ODE}) defined on $[0,\tau]$. 
Then the last singular number $s_n(X(t))$ is increasing on $[0,\tau]$. 
\end{cor}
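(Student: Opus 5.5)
The plan is to reduce to Lemma \ref{decreasing} by passing to inverses. Since $a>0$, $X(t)$ is invertible as long as it stays in $\M_n[a,b]$. Also $s_n(X(t))=\|X(t)^{-1}\|^{-1}$. So it suffices to show that $\|X(t)^{-1}\|$ is decreasing, and the natural route is to check that $Y(t)=X(t)^{-1}$ (or a related matrix) solves an ODE of the same type.

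Differentiating gives
\[Y'=-X^{-1}X'X^{-1}=-X^{-1}[\varphi(X),X]X^{-1}=-[X^{-1},\varphi(X)]=[\varphi(X),Y].\]
Write $X=U|X|$ with $U$ unitary. Then $|X^*|=U|X|U^*$, and $Y=|X|^{-1}U^*$, so $Y^*=U|X|^{-1}$. Hence
\[|Y|=|X^*|^{-1},\qquad |Y^*|=|X|^{-1}.\]
It follows that
\[\varphi(X)=\varphi_1(|Y^*|^{-1})-\varphi_2(|Y|^{-1}).\]
Now set $\psi_1(y)=\varphi_2(1/y)$ and $\psi_2(y)=\varphi_1(1/y)$ on $[1/b,1/a]$. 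Then $\varphi(X)=-\psi(Y)$, where $\psi(Y)=\psi_1(|Y|)-\psi_2(|Y^*|)$, and therefore
\[Y'=-[\psi(Y),Y].\]
The sign and the decreasing nature of $\psi$ are not in the (C0) form, so Lemma \ref{decreasing} does not apply verbatim. Its proof computation does, however, since $-\psi_1,-\psi_2$ are increasing.

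Concretely, redo the computation in Lemma \ref{decreasing} for $Y$:
\[\frac{d}{dt}\Tr(|Y|^{2m})=2m\,\Tr\bigl(\psi(Y)(|Y|^{2m}-|Y^*|^{2m})\bigr).\]
Writing $Y=V|Y|$ and using Lemma \ref{rearrangement} with the increasing functions $-\psi_1$ and $-\psi_2$, applied to $A=|Y|^{2m}$ (monotone in $|Y|$), gives
\[\Tr\bigl(\psi_1(|Y|)(|Y|^{2m}-V|Y|^{2m}V^*)\bigr)\le 0,\]
and similarly for the $\psi_2$ term. The derivative is therefore $\le 0$, so $\|Y(t)\|_{2m}$ is decreasing for every $m$. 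Letting $m\to\infty$ shows $\|Y(t)\|=1/s_n(X(t))$ is decreasing, and hence $s_n(X(t))$ is increasing.

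The main obstacle is sign bookkeeping: the same trace identity is being used with the roles of $\varphi_1$ and $\varphi_2$ swapped and an overall minus sign. I would double-check it by writing the identity directly in terms of $X$:
\[\frac{d}{dt}\Tr(|X^{-1}|^{2m})=-2m\,\Tr\bigl(\varphi(X)(|X^*|^{-2m}-|X|^{-2m})\bigr).\]
Then apply Lemma \ref{rearrangement} with $A=-|X|^{-2m}$, on which $\varphi_1(|X|)$ is an increasing function, and likewise with $A=-|X^*|^{-2m}$ for $\varphi_2$. This confirms nonpositivity.
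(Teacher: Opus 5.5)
Your proof is correct and follows the paper's route. The paper observes that $X(t)^{-1}$ solves Eq.~(\ref{ODE}) for the pair $\varphi^\sim=(-\varphi_2(x^{-1}),-\varphi_1(x^{-1}))$, which is exactly your $(-\psi_1,-\psi_2)$. This pair satisfies (C0) on $[1/b,1/a]$ and $-[\psi(Y),Y]=[\varphi^\sim(Y),Y]$, so Lemma~\ref{decreasing} applies verbatim, and your sign-correct re-derivation of the trace computation is not needed.
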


\begin{proof}
Let $\varphi^{\sim}=(-\varphi_2(x^{-1}),-\varphi_1(x^{-1}))$. 
Then $X(t)^{-1}$ is a solution of Eq.(\ref{ODE}) with $\varphi^\sim$ in place of $\varphi$ 
and $T^{-1}$ in place of $T$. 
Thus $s_n(X(t))=\|X(t)^{-1}\|^{-1}$ is increasing on $[0,\tau]$.  
\end{proof}

\begin{theorem}\label{exi&uni}
Let $\varphi=(\varphi_1,\varphi_2)$ be a pair of functions satisfying the condition (CL), 
and assume $T\in \M_n[a,b]$. 
Then a unique global solution of Eq.(\ref{ODE}) on $[0,\infty)$ exists. 	
\end{theorem}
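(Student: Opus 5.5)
We will use the standard Picard--Lindelöf theory on the closed set $\M_n[a,b]$ together with a priori bounds showing the solution cannot leave this set in finite time. The vector field $V(X)=[\varphi(X),X]$ is defined only on $\M_n[a,b]$. First extend it to all of $\M_n$ as a globally Lipschitz field on bounded sets. To do this, extend $\varphi_1,\varphi_2$ to increasing Lipschitz functions $\tilde\varphi_i$ on $[0,\infty)$ by taking them constant outside $[a,b]$: $\tilde\varphi_i(x)=\varphi_i(a)$ for $x<a$ and $\tilde\varphi_i(x)=\varphi_i(b)$ for $x>b$. By the Remark following the definition of the conditions (via the $2\times2$ block trick and \cite[Theorem 3.5.1]{AP2016}), $X\mapsto \tilde\varphi_1(|X|)-\tilde\varphi_2(|X^*|)$ is Lipschitz on $\M_n$. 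Hence $\tilde V(X)=[\tilde\varphi(X),X]$ is locally Lipschitz on $\M_n$, being a product of a Lipschitz map and the identity. Picard--Lindelöf then gives a unique maximal solution $X$ on $[0,\tau_{\max})$, and if $\tau_{\max}<\infty$ then $\|X(t)\|\to\infty$.

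The key step is to show that the solution of the extended equation stays in $\M_n[a,b]$; on that set $\tilde V=V$, so it solves Eq.(\ref{ODE}). For the upper bound, let $\tau_1$ be the supremum of times up to which $X(t)\in\M_n[a,b]$. On $[0,\tau_1]$, Lemma \ref{decreasing} gives $\|X(t)\|\le\|T\|\le b$. For the lower bound when $a>0$, Corollary \ref{increasing} gives $s_n(X(t))\ge s_n(T)\ge a$. The subtle point is that these bounds hold only while the solution lies in $\M_n[a,b]$, so the argument must not become circular. I would avoid this by observing that the proofs of Lemma \ref{decreasing} and Corollary \ref{increasing} use only that $\tilde\varphi_1,\tilde\varphi_2$ are increasing with strictly increasing sum up to the relevant inequality (only monotonicity is needed for the inequality). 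The extensions $\tilde\varphi_i$ are increasing on $[0,\infty)$, so Lemma \ref{rearrangement} still applies. The derivative computation in Lemma \ref{decreasing} then holds verbatim for the extended flow at all times.

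For Corollary \ref{increasing}, the inverse flow must exist, so $X(t)$ must remain invertible. This I would handle by continuity: invertibility fails first at some time at which $s_n$ would have reached $0$. Before that time the monotonicity gives $s_n\ge s_n(T)=a>0$, which is a contradiction. When $a=0$ the lower bound is vacuous.

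Therefore $\sigma(|X(t)|)\subset[a,b]$ for all $t<\tau_{\max}$ and $\|X(t)\|\le\|T\|$ stays bounded, which forces $\tau_{\max}=\infty$. Uniqueness follows because any solution of Eq.(\ref{ODE}) lying in $\M_n[a,b]$ also solves the extended Lipschitz equation, whose solutions are unique. The main obstacle I expect is exactly the justification that the monotonicity results apply to the extended field, including invertibility in the $a>0$ case. A fallback is a barrier argument: at a first exit time, $\|X\|$ would exceed $b$ or $s_n$ would fall below $a$, contradicting the one-sided derivative signs.
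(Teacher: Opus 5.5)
Your proposal is correct and follows essentially the same route as the paper. Both extend $\varphi_i$ beyond $[a,b]$ to get a Lipschitz vector field, then apply Lemma~\ref{decreasing} and Corollary~\ref{increasing} to the extended flow to show that the solution never leaves $\M_n[a,b]$. The differences are technical. The paper extends only to $[a-\varepsilon,b+\varepsilon]$ while keeping (CL), so the lemmas apply verbatim, and it continues the solution in uniform time steps. You instead extend by constants to $[0,\infty)$, which forces you to add two things, both of which you handle correctly: that only monotonicity of $\tilde\varphi_i$ is used, and a continuity argument for invertibility. You then conclude with the blow-up alternative for the maximal solution.
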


\begin{proof}
First, assume $a>0$. 
We choose and fix $0<\varepsilon <a$, and extend $\varphi_1$ and $\varphi_2$ to $[a-\varepsilon,b+\varepsilon]$ 
such that they satisfy the condition (CL) for $[a-\varepsilon,b+\varepsilon]$. 
Let $L$ be the Lipschitz constant of $[\varphi(X),X]$ on $\M_n[a-\varepsilon,b+\varepsilon]$, 
and let 
\[M=\sup\{\|[\varphi(X),X]\|;\;X\in \M_n[a,b]\}.\]
Then $T_1\in \M_n$ with $\|T-T_1\|<\varepsilon$ assures $\sigma(|T_1|)\subset (a-\varepsilon,b+\varepsilon)$ 
since 
\[|s_k(T_1)-s_k(T)|\leq \|T_1-T\|,\]
holds for all $1\leq k\leq n$. 
For such $T_1$, we have the estimate 
\[ \|[\varphi(T_1),T_1]\|\leq L\|T_1-T\|+\|[\varphi(T),T]\|\leq L\varepsilon+M.\]
Let $\tau=\varepsilon/(L\varepsilon+M)$. 
Then there exists a unique solution of Eq.(\ref{ODE}) on $[0,\tau]$ 
(see \cite[Theorem 2.2]{Te2012} for example). 
Thanks to Lemma \ref{decreasing} and Corollary \ref{increasing}, we have 
$X(t)\in \M_n[a,b]$ for all $t\in [0,\tau]$. 
Thus the solution uniquely extends to $t\in [0,2\tau]$. 
Repeating the same argument, we get the desired global solution.  

When $a=0$, we can apply the same argument to $[0,b+\varepsilon]$ in place of 
$[a-\varepsilon,b+\varepsilon]$ by using Lemma \ref{decreasing}. 
\end{proof}

\begin{definition}
Let $\varphi=(\varphi_1,\varphi_2)$ be a pair of functions satisfying the condition (CL), 
and assume $T\in \M_n[a,b]$. 
Let $X(t)$ be the unique global solution of Eq.(\ref{ODE}) on $[0,\infty)$.  
We define a flow $F^\varphi_t$ in $\M_n$ by $F^\varphi_t(T)=X(t)$. 

We denote $F^A_t=F^{(\log x,0)}_t$ and $F^H_t=F^{(x^2,x^2)}_t$, and call them 
\textit{the Aluthge flow} and \textit{the Haagerup flow} respectively. 
\end{definition}

\begin{proposition}\label{orbit}
Let $\varphi=(\varphi_1,\varphi_2)$ be a pair of functions satisfying the condition (CL), 
and assume $T\in \M_n[a,b]$. 
Then the flow $\{F^\varphi_t(T)\}_{t\geq 0}$ stays in the similarity orbit $\cO(T)$ of $T$. 
\end{proposition}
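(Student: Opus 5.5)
The plan is to exhibit the solution explicitly as a similarity conjugate of $T$. Write $X(t)=F^\varphi_t(T)$ and set $A(t)=\varphi(X(t))$. This is a continuous (indeed Lipschitz in $t$) matrix-valued function on $[0,\infty)$: by Theorem \ref{exi&uni} the solution $X(t)$ is $C^1$, it stays in $\M_n[a,b]$ by Lemma \ref{decreasing} and Corollary \ref{increasing} (or Lemma \ref{decreasing} alone when $a=0$), and $\varphi$ is Lipschitz continuous on $\M_n[a,b]$ by the Remark.

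Next I solve the linear ODE
\[
\frac{dS(t)}{dt}=A(t)S(t),\qquad S(0)=I,
\]
on $[0,\infty)$. Since $A$ is continuous, this linear equation has a unique global $C^1$ solution. $S(t)$ is invertible for all $t$, for either of two reasons. One is Liouville's formula $\det S(t)=\exp\int_0^t\Tr A(s)\,ds\neq 0$. The other is that the companion equation $R'=-RA$, $R(0)=I$, gives $(RS)'=0$, so $RS=I$. This second argument also shows $\frac{d}{dt}S^{-1}=-S^{-1}A$.

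Now put $Y(t)=S(t)TS(t)^{-1}$. Differentiating gives
\[
Y'=AS T S^{-1}-STS^{-1}A=[A(t),Y(t)],\qquad Y(0)=T.
\]
Then $X$ and $Y$ both solve the linear ODE $Z'=[A(t),Z]$ with the same initial value, where $A(t)$ is now a fixed continuous coefficient. The right-hand side is globally Lipschitz in $Z$, with constant $2\sup_{[0,\tau]}\|A\|$ on each $[0,\tau]$. By uniqueness for linear equations, $X(t)=Y(t)=S(t)TS(t)^{-1}\in\cO(T)$ for all $t\ge0$.

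The only step needing care is this last one. I must avoid appealing to uniqueness for the nonlinear equation (\ref{ODE}) with $Y$ in place of $X$, since that would require first knowing $\varphi(Y)=A$. Freezing $A(t)=\varphi(X(t))$ along the given solution sidesteps this, so no real obstacle remains. Here $\varphi$ only needs to be continuous in $t$ along the solution, which (C0) already gives.
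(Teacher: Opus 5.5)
Your argument is correct and is essentially the paper's second (Haagerup-style) proof: your $S$ and $R$ are exactly the paper's $W$ and $V$, obtained by solving linear equations with the frozen coefficient $\varphi(F^\varphi_t(T))$. The only cosmetic difference is that you conclude via uniqueness for the linear equation $Z'=[A(t),Z]$, whereas the paper checks directly that $V(t)F^\varphi_t(T)W(t)$ has zero derivative. The paper also sketches a first argument using tangency of $[\varphi(X),X]$ to $\cO(T)$.
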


\begin{proof} Note that we have $[\varphi(X),X]\in T_X\cO(T)$ for $X\in \M_n[a,b]\cap \cO(T)$ as 
\[[\varphi(X),X]=\frac{d}{ds}e^{s\varphi(X)}Xe^{-s\varphi(X)}|_{s=0},\] 
where $T_X\cO(T)$ denotes the tangent space of $\cO(T)$ at $X$. 
Thus the uniqueness of the solution yields that the flow is the integral curve of 
the vector field $[\varphi(X),X]$ in $\cO(T)$. 

Or alternatively, we can follow the argument in \cite[Section 2]{HS2009}. 
We solve the two linear equations in $\M_n$: 
\begin{equation*}
\frac{dV(t)}{dt}=-V(t)\varphi(F^\varphi_t(T)),\quad V(0)=I,
\end{equation*} 
\begin{equation*}
\frac{dW(t)}{dt}=\varphi(F^\varphi_t(T))W(t),\quad W(0)=I, 
\end{equation*}
which have unique global solutions. 
A direct computation shows 
\[\frac{d(V(t)W(t))}{dt}=0,\]
\[\frac{d(V(t)F^\varphi_t(T)W(t))}{dt}=0,\]
which imply $V(t)W(t)=I$ and $V(t)F^\varphi_t(T)W(t)=T$. 
Thus $F^\varphi_t(t)=V(t)^{-1}TV(t)$. 
\end{proof}

\begin{remark} The Aluthge flow has a special property with respect to tensor product. 
For $T_i\in GL(n_i,\C)$, $i=1,2\cdots,m$, we have 
\[\log\left| T_1\otimes T_2\otimes\cdots\otimes T_m \right|
=\sum_{i=1}^m I^{\otimes^{i-1}}\otimes \log|T_i|\otimes I^{\otimes^{n-i}}. \]
Consequently, we have 
\[F^A_t(T_1\otimes T_2\otimes \cdots \otimes T_m)
=F^A_t(T_1)\otimes F^A_t(T_2)\otimes \cdots \otimes F^A_t(T_m).\]
For $T\in GL(n,\C)$, we denote by $T^{\wedge^k}$ the restriction of $T^{\otimes^k}$ 
to the $k$-th antisymmetric tensor product $\bigwedge^k \C^n$ of $\C^n$. 
Then we get $F_t^A(T)^{\wedge^k}=F_t^A(T^{\wedge^k})$. 
Thus Lemma \ref{decreasing} shows that $\|F_t^A(T)^{\wedge^k}\|$ is decreasing with respect to $t$, 
which means that 
\[\prod_{j=1}^ks_j(F^A_t(T))\] 
is decreasing for every $1\leq k\leq n$. 
When $k=n$, the above product equals $|\det F^A_t(T)|=|\det T|$ since $F^A_t(T)\in \cO(T)$, and it is a constant. 
Thus $F^A_t(T)$ is decreasing in the sense of logarithmic majorization (see \cite{AH1994}). 
In particular, $|||F^A_t(T)|||$ is decreasing for every unitarily invariant norm $|||\cdot|||$. 
\end{remark}

\begin{question} Is $|||F^\varphi_t(T)|||$ decreasing with respect to $t$ 
for any unitarily invariant norm $|||\cdot|||$? 
We have already seen in Lemma \ref{decreasing} that it is the case for the Schatten norm $\|\cdot \|_{2n}$ 
for $n\in \N$.  
\end{question}

\section{Convergence results}
Since the flow $F^\varphi_t$ satisfies $F^\varphi_t(UTU^{-1})=UF^\varphi_t(T)U^{-1}$ for any unitary $U\in \M_n$,   
by the Schur unitary triangulation theorem, we may assume 
that the initial value $T$ is upper triangular in order to obtain convergence results.  
Throughout this section, we fix $\varphi$ satisfying the condition (CL) 
and fix an upper triangular matrix $T\in \M_n$ with $T\in \M_n[a,b]$. 
We denote by $\Lambda=\diag(\lambda_1,\lambda_2,\cdots,\lambda_n)$ the diagonal part of $T$. 

Our basic idea is to separate the dynamics of $F^\varphi_t(T)$ into that of 
upper triangular matrices and that of unitary matrices.  
For this purpose, we need to find a unitary flow $U(t)$ satisfying $U(t)F^\varphi_t(T)U(t)^{-1}=\Lambda+Y(t)$ 
with strictly upper triangular $Y(t)$. 
Assuming such $U(t)$ exists, we would get 
\begin{align*}
	\frac{d (U(t)F^\varphi_t(T)U(t)^{-1})}{dt} &=\frac{dU(t)}{dt}F^\varphi_t(T)U(t)^{-1}
	+U(t)[\varphi(F^\varphi_t(T)),F^\varphi(T)]U(t)^{-1}\\
	&-U(t)F^\varphi_t(T)U(t)^{-1}\frac{dU(t)}{dt}U(t)^{-1}\\
	&=[\frac{dU(t)}{dt}U(t)^{-1}+\varphi(\Lambda+Y(t)),\Lambda+Y(t)].
\end{align*}
Since $U(t)$ is required to be unitary, the term $\frac{dU(t)}{dt}U(t)^{-1}$ should be skew-adjoint. 
On the other hand, to keep the commutator at the end strictly upper triangular, the term 
$\frac{dU(t)}{dt}U(t)^{-1}+\varphi(\Lambda+Y(t))$ should be upper triangular. 
We can fulfill this requirement by using the additive decomposition of $\varphi(\Lambda+Y)$ into 
its strictly upper triangular, diagonal, and strictly lower triangular parts. 

In the following arguments, we treat $Y=(y_{ij})$ as a variable whose value is in the set of $n$-by-$n$ 
strictly upper triangular matrices $\mathbf{N}_n$, while we treat $\Lambda$ as a fixed diagonal matrix. 
We decompose $\varphi(\Lambda +Y)$ as 
\[\varphi(\Lambda +Y)=P(Y)+D(Y)+P(Y)^*,\] 
with strictly upper triangular $P(Y)$ and diagonal $D(Y)$.  
Now we consider the following system of ODEs: 

\begin{equation}\label{upptriflow}
\frac{dY(t)}{dt}=[D(Y(t))+2P(Y(t)),\Lambda+Y(t)],\quad Y(0)=T-\Lambda,
\end{equation}
\begin{equation}\label{unitaryflow}
\frac{dU(t)}{dt}=(P(Y(t))-P(Y(t))^*)U(t),\quad U(0)=I.
\end{equation}

\begin{lemma}
Let the notation be as above. 
Then unique global solutions of Eq.(\ref{upptriflow}) and Eq.(\ref{unitaryflow}) exist on $[0,\infty)$, 
and $U(t)$ is unitary for all $t\in [0,\infty)$. 
Moreover, we have 
\[F^\varphi_t(T)=U(t)^{-1}(\Lambda+Y(t))U(t),\]
for all $t\in [0,\infty)$. 
\end{lemma}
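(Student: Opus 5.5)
Rather than solve Eq.(\ref{upptriflow}) and Eq.(\ref{unitaryflow}) directly and then compare with $F^\varphi_t(T)$, I will build them from the known global solution $X(t)=F^\varphi_t(T)$ of Theorem \ref{exi&uni}. The point is that global existence for Eq.(\ref{upptriflow}) is not obvious on its own, since the right-hand side is only locally Lipschitz on $\mathbf{N}_n$. I therefore expect no a priori bound on $Y$ to be needed beyond what the known flow already supplies.

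\emph{Step 1 (local solutions and unitarity).} Write $\Phi(Y)=D(Y)+2P(Y)$. Since $\varphi(\Lambda+Y)=P(Y)+D(Y)+P(Y)^*$ is self-adjoint, $D(Y)$ is real diagonal and
\[\Phi(Y)=\varphi(\Lambda+Y)+\bigl(P(Y)-P(Y)^*\bigr).\]
This identity is the key algebraic fact. The map $Y\mapsto\Phi(Y)$ is locally Lipschitz wherever $\Lambda+Y\in\M_n[a-\varepsilon,b+\varepsilon]$, by the Remark after the definition of (CL) (extending $\varphi_i$ as in the proof of Theorem \ref{exi&uni}). The commutator $[\Phi(Y),\Lambda+Y]$ is strictly upper triangular, because $\Phi(Y)$ and $\Lambda+Y$ are both upper triangular with diagonal parts $D(Y)$ and $\Lambda$, which commute. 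So Eq.(\ref{upptriflow}) is an ODE on $\mathbf{N}_n$ and has a unique maximal solution on some interval $[0,\tau^*)$. Eq.(\ref{unitaryflow}) is linear with continuous coefficient on that interval, so it has a solution there. Since $K(t)=P(Y(t))-P(Y(t))^*$ is skew-adjoint, $\frac{d}{dt}(U^*U)=U^*(K^*+K)U=0$, so $U(t)$ is unitary.

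\emph{Step 2 (intertwining).} On $[0,\tau^*)$ set $Z(t)=U(t)^{-1}(\Lambda+Y(t))U(t)$. Differentiating, with $U'U^{-1}=K$, gives
\[Z'=U^{-1}\bigl([\Phi(Y),\Lambda+Y]-[K,\Lambda+Y]\bigr)U=U^{-1}[\varphi(\Lambda+Y),\Lambda+Y]U=[\varphi(Z),Z],\]
using the identity above. The last equality holds because $\varphi(V^*XV)=V^*\varphi(X)V$ for unitary $V$. Also $Z(0)=T$. By the uniqueness in Theorem \ref{exi&uni}, $Z(t)=F^\varphi_t(T)$ on $[0,\tau^*)$.

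\emph{Step 3 (globality).} This is the main obstacle, and it is handled by Step 2. Because $\Lambda+Y(t)=U(t)F^\varphi_t(T)U(t)^{-1}$, we get $\|Y(t)\|\le\|\Lambda\|+\|T\|$ by Lemma \ref{decreasing}. Moreover $\Lambda+Y(t)\in\M_n[a,b]$, since it is unitarily equivalent to $F^\varphi_t(T)$, whose singular values stay in $[a,b]$ by Lemma \ref{decreasing} and Corollary \ref{increasing}. Hence $Y(t)$ stays in a compact set on which the vector field is uniformly Lipschitz. The standard continuation argument, with the same uniform step $\tau$ as in Theorem \ref{exi&uni}, then gives $\tau^*=\infty$. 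The linear equation Eq.(\ref{unitaryflow}) is then solvable on $[0,\infty)$ with $U(t)$ unitary, and the identity $F^\varphi_t(T)=U(t)^{-1}(\Lambda+Y(t))U(t)$ holds for all $t\ge0$. Uniqueness of both solutions follows from the local Lipschitz property.
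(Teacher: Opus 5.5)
Your proposal is correct and follows essentially the same route as the paper. Both arguments take a local solution of the triangular system and get unitarity of $U(t)$ from the skew-adjointness of $P(Y)-P(Y)^*$. Both then show by the intertwining computation that $U(t)^{-1}(\Lambda+Y(t))U(t)$ solves Eq.~(\ref{ODE}), so uniqueness identifies it with $F^\varphi_t(T)$, and use $\Lambda+Y(t)\in\M_n[a,b]$ to continue the solution indefinitely. The differences are cosmetic: you phrase the continuation via a maximal solution that must leave compact sets, where the paper iterates $[0,\tau]\to[0,2\tau]$, and you state explicitly the identity $D(Y)+2P(Y)=\varphi(\Lambda+Y)+P(Y)-P(Y)^*$ and the strict upper triangularity of the right-hand side, which the paper uses tacitly.
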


\begin{proof} We show the statement assuming $a>0$. 
The case $a=0$ can be treated in a similar way. 
We choose $0<\varepsilon<a$ and extend $\varphi_1$ and $\varphi_2$ to $[a-\varepsilon,b+\varepsilon]$ 
satisfying the condition (CL). 
Let $L$ be the Lipschitz constant of $[D(Y)+2P(Y),\lambda+Y]$ on 
\[\{Y\in \mathbf{N}_n;\; \Lambda +Y\in \M_n[a-\varepsilon,b+\varepsilon] \},\]  
and let 
\[M=\sup_{\Lambda+Y\in \M_n[a,b]}\|[D(Y)+2P(Y),\Lambda+Y]\|.\]
Then a unique local solution of Eq.(\ref{upptriflow}) exists on $[0,\tau]$ with 
$\tau=\varepsilon/(L\epsilon+M)$ as in the proof of Theorem \ref{exi&uni}.   
Since Eq.(\ref{unitaryflow}) is a linear equation with a continuous coefficient term $(P(Y(t))-P(Y(t))^*)$, 
a unique solution exists on $[0,\tau]$ too. 
A direct computation shows 
\[\frac{d(U(t)^*U(t))}{dt}=0,\]
and $U(t)$ is unitary. 

Let $X(t)=U(t)^{-1}(\Lambda+Y(t))U(t)$ for $t\in [0,\tau]$. 
Then $X(0)=T$ and 
\begin{align*}
\frac{dX(t)}{dt}&=-U(t)^{-1}\frac{dU(t)}{dt}U(t)^{-1}(\Lambda+Y(t))U(t)+U(t)^{-1}\frac{dY(t)}{dt}U(t) \\
&+U(t)^{-1}(\Lambda+Y(t))\frac{dU(t)}{dt}\\
&=-U(t)^{-1}(P(Y(t))-P(Y(t))^*)(\Lambda+Y(t))U(t)\\
&+U(t)^{-1}[D(Y(t))+2P(Y(t)),\Lambda+Y(t)]U(t)\\
&+U(t)^{-1}(\Lambda+Y(t))(P(Y(t))-P(Y(t))^*)U(t)\\
&=U(t)^{-1}[\varphi(\Lambda+Y(t)),\Lambda+Y(t)]U(t)\\
&=[\varphi(X(t)),X(t)]. 
\end{align*} 
Thus Theorem \ref{exi&uni} implies $U(t)^{-1}(\Lambda+Y(t))U(t)=F^\varphi_t(T)$. 
This shows $\Lambda +Y(t)\in \M_n[a,b]$ for all $t\in [0,\tau]$, and we can extend the solution 
$Y(t)$ to $[0,2\tau]$. 
Repeating the same argument, we get the desired unique global solutions of 
Eq.(\ref{upptriflow}) and Eq.(\ref{unitaryflow}), which satisfy
$F^\varphi_t(T)=U(t)^{-1}(\Lambda+Y(t))U(t)$.   
\end{proof}

Since 
\[\|F^\varphi_t(T)-U(t)^{-1}\Lambda U(t)\|_2=\|Y(t)\|_2,\]
our task is to show that $\{Y(t)\}_{t\geq 0}$ converges to 0 and 
$\{U(t)^{-1}\Lambda U(t)\}_{t\geq 0}$ converges as $t$ tends to $\infty$. 
The first convergence holds without any additional assumption.

\begin{theorem} Let the notation be as above. 
Then $\|Y(t)\|_2$ converges to 0 as $t$ tends to $\infty$.  
\end{theorem}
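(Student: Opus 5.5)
The plan is to use the squared Hilbert--Schmidt norm of $Y(t)$ as a Lyapunov function and to show that its dissipation rate must vanish along the trajectory. Since $F^\varphi_t(T)=U(t)^{-1}(\Lambda+Y(t))U(t)$ with $U(t)$ unitary, and $\Lambda$ is diagonal while $Y(t)$ is strictly upper triangular, we have
\[\|Y(t)\|_2^2=\|\Lambda+Y(t)\|_2^2-\|\Lambda\|_2^2=\|F^\varphi_t(T)\|_2^2-\sum_{i=1}^n|\lambda_i|^2 .\]
We will use the computation in the proof of Lemma \ref{decreasing} with exponent $1$, i.e.\ for $\|X(t)\|_2^2=\Tr(X(t)^*X(t))$. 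For $X(t)=F^\varphi_t(T)$ it gives
\[\frac{d\|Y(t)\|_2^2}{dt}=-2\,g(X(t)),\qquad g(X):=\Tr(\varphi(X)[X^*,X]) .\]
By Lemma \ref{positive semi-definite}, $g\geq 0$ on $\M_n[a,b]$, with $g(X)=0$ exactly when $X$ is normal. Hence $\|Y(t)\|_2^2$ is decreasing and converges to some limit $c\geq 0$. Moreover
\[\int_0^\infty g(X(t))\,dt\leq \|Y(0)\|_2^2/2<\infty .\]

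Next we show that $g(X(t))\to 0$. The function $g$ is unitarily invariant, so $g(X(t))=g(\Lambda+Y(t))$. The set
\[K=\{Y\in\mathbf{N}_n;\;\Lambda+Y\in\M_n[a,b]\}\]
is compact, since it is closed and bounded because $\|Y\|_2\le\|T\|_2$. On $K$ the map $Y\mapsto g(\Lambda+Y)$ is continuous, because $\varphi$ is continuous by the Remark following the definitions of (C0)--(C1). The right-hand side of Eq.(\ref{upptriflow}) is bounded on $K$, so $Y(t)$ is uniformly Lipschitz in $t$. Therefore $t\mapsto g(\Lambda+Y(t))$ is uniformly continuous on $[0,\infty)$. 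A nonnegative, uniformly continuous, integrable function on $[0,\infty)$ tends to $0$ (a Barbalat-type argument), so $g(\Lambda+Y(t))\to 0$.

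Finally we identify the limit points. By compactness of $K$, every sequence $t_k\to\infty$ has a subsequence with $Y(t_k)\to Y_\infty\in K$. Continuity gives $g(\Lambda+Y_\infty)=0$, so $\Lambda+Y_\infty$ is normal by Lemma \ref{positive semi-definite}. An upper triangular normal matrix is diagonal: compare the $(1,1)$ entries of $X^*X$ and $XX^*$ and induct on the size. Hence $Y_\infty=0$, which gives $c=\lim\|Y(t)\|_2^2=0$.

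The only delicate step is passing from integrability of the dissipation $g$ to its pointwise decay. This is handled by the uniform Lipschitz bound on $Y(t)$ together with the compactness of $K$. If one prefers to avoid the Barbalat-type lemma, a simpler route works: since $c=\lim\|Y(t)\|_2^2$ exists, it suffices to find one sequence $t_k\to\infty$ with $g(X(t_k))\to 0$. Such a sequence exists by integrability alone, and the limit-point argument above then applies to it directly.
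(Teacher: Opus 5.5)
Your argument is correct and is essentially the paper's: $\|Y(t)\|_2^2=\|F^\varphi_t(T)\|_2^2-\|\Lambda\|_2^2$ serves as a strict Lyapunov function. Its derivative $-2\Tr(\varphi(\Lambda+Y)[(\Lambda+Y)^*,\Lambda+Y])$ is $\le 0$ and vanishes only when the upper triangular matrix $\Lambda+Y$ is normal, i.e.\ $Y=0$. The only difference is in the last step: the paper cites the standard Lyapunov/LaSalle results in Teschl (Theorems 6.13, 6.14), while you carry out the compactness and limit-point argument by hand. Your simpler variant, picking $t_k\to\infty$ with $g(X(t_k))\to 0$, already suffices. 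One small slip: $K$ is bounded because $\|\Lambda+Y\|\le b$; the bound $\|Y\|_2\le\|T\|_2$ is a property of the trajectory, not of $K$.
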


\begin{proof} We show that $\|Y\|_2^2$ is a strict Lyapunov function (see \cite[page 201]{Te2012} for the definition). 
Since $\|F^\varphi_t(T)\|_2^2=\|\Lambda\|_2^2+\|Y(t)\|_2^2$, Lemma \ref{positive semi-definite} and 
Lemma \ref{decreasing} show 
\begin{align}\label{Lyapunov}
\frac{d\|Y(t)\|^2}{dt}&=\frac{d\|F^\varphi_t(T)\|^2}{dt} \\ \nonumber
 &= -2\Tr(\varphi(F^\varphi_t(T))[F^\varphi_t(T)^*,F^\varphi_t(T)])\\ \nonumber
 &=-2\Tr(\varphi(\Lambda+Y(t))[(\Lambda+Y(t))^*,\Lambda+Y(t)])\\ \nonumber
 &\leq 0,
\end{align}
and equality holds if and only if $\Lambda+Y(t)$ is normal, that is, $Y(t)=0$. 
Thus the convergence follows from \cite[Theorem 6.13, 6.14]{Te2012}.  
\end{proof}

As a corollary, we obtain an analogue of Yamazaki's result \cite[Theorem 1]{Y2002} for iterated  Aluthge transforms (see also \cite{Ta2008}). 

\begin{cor}\label{numerical} Under the condition (CL), the operator norm $\|F^\varphi_t(T)\|$ converges to the spectral 
radius $r(T)$ of $T$ as $t$ tends to $\infty$. 

\end{cor}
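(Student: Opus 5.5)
We will squeeze $\|F^\varphi_t(T)\|$ between $r(T)$ and a quantity that tends to $r(T)$. The preceding theorem shows $\|Y(t)\|_2\to 0$, and the preceding lemma gives the unitary decomposition $F^\varphi_t(T)=U(t)^{-1}(\Lambda+Y(t))U(t)$.

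For the lower bound, we use Proposition \ref{orbit}: $F^\varphi_t(T)\in\cO(T)$, so $F^\varphi_t(T)$ has the same spectrum as $T$. Since the operator norm dominates the spectral radius, $\|F^\varphi_t(T)\|\geq r(F^\varphi_t(T))=r(T)$ for all $t$.

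For the upper bound, unitary invariance of the operator norm gives
\[\|F^\varphi_t(T)\|=\|\Lambda+Y(t)\|\leq \|\Lambda\|+\|Y(t)\|\leq \|\Lambda\|+\|Y(t)\|_2 .\]
The diagonal matrix $\Lambda$ has the eigenvalues $\lambda_1,\dots,\lambda_n$ of $T$ on its diagonal, so $\|\Lambda\|=\max_i|\lambda_i|=r(T)$. Hence
\[r(T)\leq\|F^\varphi_t(T)\|\leq r(T)+\|Y(t)\|_2,\]
and letting $t\to\infty$ gives the claim.

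All steps are immediate; the only real input is the convergence $\|Y(t)\|_2\to0$ from the preceding theorem. As a side remark, Lemma \ref{decreasing} shows that $\|F^\varphi_t(T)\|$ is decreasing, so the convergence is in fact monotone, although this is not needed for the proof.
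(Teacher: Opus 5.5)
Your proof is correct and is essentially the argument the paper intends: the corollary is stated without separate proof as an immediate consequence of $\|Y(t)\|_2\to 0$, the decomposition $F^\varphi_t(T)=U(t)^{-1}(\Lambda+Y(t))U(t)$, and $\|U(t)^{-1}\Lambda U(t)\|=\|\Lambda\|=r(T)$. Your lower bound via the similarity orbit is fine; the reverse triangle inequality $\|\Lambda+Y(t)\|\geq\|\Lambda\|-\|Y(t)\|$ would give it just as well.
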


\begin{cor}\label{sca+nil} Under the condition (CL), if $T$ is a scalar $\alpha I$ plus a nilpotent matrix, then  
$F^\varphi_t(T)$ converges to $\alpha I$ as $t$ tends to $\infty$.  
\end{cor}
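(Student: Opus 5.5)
We would use the triangular decomposition of this section. Since $F^\varphi_t(UTU^{-1})=UF^\varphi_t(T)U^{-1}$ for unitary $U$, and $UTU^{-1}$ is again a scalar plus a nilpotent, we may assume by Schur triangulation that $T$ is upper triangular. Its diagonal part is then $\Lambda=\alpha I$, because the eigenvalues of $\alpha I+N$ with $N$ nilpotent are all equal to $\alpha$. By the lemma above,
\[F^\varphi_t(T)=U(t)^{-1}(\Lambda+Y(t))U(t)\]
with $U(t)$ unitary and $Y(t)$ strictly upper triangular.

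The key point is that $U(t)^{-1}\Lambda U(t)=\alpha I$ is constant, so the unitary part of the dynamics plays no role. Therefore
\[\|F^\varphi_t(T)-\alpha I\|_2=\|U(t)^{-1}Y(t)U(t)\|_2=\|Y(t)\|_2,\]
and the preceding theorem gives $\|Y(t)\|_2\to 0$ as $t\to\infty$. Since all norms on $\M_n$ are equivalent, $F^\varphi_t(T)\to\alpha I$ in any norm.

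No step is a genuine obstacle. The only thing to check is that this case avoids the general difficulty, namely the convergence of $U(t)^{-1}\Lambda U(t)$, which is immediate here because $\Lambda$ is scalar.

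As an alternative check that avoids the triangular flow, one can use Proposition \ref{orbit}: $F^\varphi_t(T)$ stays in $\cO(T)$, so it has the single eigenvalue $\alpha$ and $r(F^\varphi_t(T)-\alpha I)=0$. The Hilbert--Schmidt decay of the strictly upper triangular part in a Schur form is exactly the content of the previous theorem, so the first argument is the one we would present.
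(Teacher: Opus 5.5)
Your proposal is correct and follows the paper's intended argument. The paper leaves the corollary unproved, but it reduces to the identity $\|F^\varphi_t(T)-U(t)^{-1}\Lambda U(t)\|_2=\|Y(t)\|_2$ together with the theorem that $\|Y(t)\|_2\to 0$. With $\Lambda=\alpha I$, the unitary conjugation term is constantly $\alpha I$, exactly as you observe.
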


Note that we have 
\begin{align*}
\left\|\frac{d(U(t)^{-1}\Lambda U(t))}{dt}\right\|_2
 &=\left\| -U(t)^{-1}\frac{dU(t)}{dt}U(t)^{-1}\Lambda U(t)+U(t)^{-1}\Lambda \frac{dU(t)}{dt}\right\|_2 \\
 &=\|[\Lambda,P(Y(t))-P(Y(t))^*]\|_2, 
\end{align*}
and 
\[2\|[\Lambda,P(t)]\|_2^2= \|[\Lambda,\varphi(\Lambda+Y(t))]\|_2^2
=\|[\Lambda, \varphi(\Lambda+Y(t))-\varphi(\Lambda)]\|_2^2.\]
Thus to prove the convergence of $\{U(t)^{-1}\Lambda U(t)\}_{t\geq 0}$, it suffices to show 
\begin{equation}\label{finitelength}
\sqrt{2}\int_0^\infty  \|[\Lambda,P(t)]\|_2dt
=\int_0^\infty \|[\Lambda, \varphi(\Lambda+Y(t))-\varphi(\Lambda)]\|_2dt<\infty. 
\end{equation}
Since $\varphi$ is Lipschitz, this follows if $Y(t)$ decays sufficiently fast.  

To perform further analysis, we may and do arrange $T$, up to unitary equivalence, so that 
\[\Lambda=\diag(\overbrace{\mu_1,\cdots,\mu_1}^{m^a_1},\overbrace{\mu_2,\cdots,\mu_2}^{m^a_2},\cdots, 
\overbrace{\mu_s,\cdots,\mu_s}^{m^a_s}),\]
\[|\mu_1|\leq |\mu_2|\leq \cdots\leq |\mu_s|,\]
with distinct eigenvalues $\{\mu_1,\mu_2,\cdots,\mu_s\}$ of $T$. 
Thus if $T$ is not invertible, we have $\mu_1=0$. 
We call $m^a_k$ the algebraic multiplicity of the eigenvalue $\mu_k$. 
We set $m^g_k=\dim \ker(T-\mu_k I)$, and call it the geometric multiplicity of $\mu_k$. 
When $\mu_1=0$, we may and do assume $T_{ij}=0$ for $0\leq i\leq j\leq m^g_1$. 

\begin{remark}\label{diagonalizable} 
Under the above assumption, the following hold: 
\begin{itemize}
\item[(1)] If $T$ as above is diagonalizable, then $T_{ij}=0$ for all $i<j$ with $\lambda_i=\lambda_j$.   
	Indeed, if there exist two indices $i<j$ with $\lambda_i=\lambda_j$ and $T_{ij}\neq 0$, 
	we can express $T-\lambda_iI$ as the block matrix 
	\[T-\lambda_iI=\left(
	\begin{array}{ccc}
		A &B &*  \\
		O &N &*  \\
		O &O &C 
	\end{array}
	\right),
	\]
	so that $A$ and $C$ are invertible matrices and $N$ is a non-zero nilpotent matrix. 
	Since \[(T-\lambda_iI)^2=\left(
	\begin{array}{ccc}
		A^2 &AB+ BN&*  \\
		O &N^2 &*  \\
		O &O &C^2 
	\end{array}
	\right),
	\]
	we get $\ker (T-\lambda_i I)^2\varsupsetneqq\ker (T-\lambda_i I)$, and thus $T$ is not diagonalizable. 
\item[(2)] If $\mu_1=0$, and $Y(t)=(y(t)_{ij})$ is the unique solution of Eq.(\ref{upptriflow}), 
we have $y_{ij}(t)=0$ for all $1\leq i<j\leq m^g_1$ and $t\geq 0$. 
Indeed, it follows from the uniqueness of the solution since we can show that Eq.(\ref{upptriflow}) with 
this additional condition has a solution. 
In the rest of this section, we always assume $y_{ij}=0$ for $1\leq i<j\leq m^g$. 
\end{itemize}
\end{remark}	

For $f\in C^1[a,b]$, we define its first divided difference $f^{[1]}\in C[a,b]^2$ by 
\[f^{[1]}(x,y)=\left\{
\begin{array}{ll}
\frac{f(x)-f(y)}{x-y} , &\quad x\neq y,  \\
f'(x) , &\quad x=y. 
\end{array}
\right.
\]
We denote by $\{e_i\}_{i=1}^n$ the canonical basis of $\C^n$, and by $\{E_{ij}\}_{1\leq i,j\leq n}$ 
the system of matrix units corresponding to $\{e_i\}_{i=1}^n$.  

\begin{lemma}\label{Hessian} 
Let the notation be as above, and assume that $\varphi$ satisfies the condition (C1). 
Then 
\begin{align*}\lefteqn{\Tr(\varphi(\Lambda+Y)[(\Lambda+Y)^*,\Lambda+Y])}\\
 &= \sum_{i<j, \lambda_i\neq \lambda_j}
\frac{\varphi_1^{[1]}(|\lambda_i|,|\lambda_j|)+\varphi_2^{[1]}(|\lambda_i|,|\lambda_j|)}{|\lambda_i|+|\lambda_j|}
|\lambda_i-\lambda_j|^2|y_{ij}|^2
+o(\|Y\|_2^2)
\end{align*}
as $Y$ tends to 0. 
\end{lemma}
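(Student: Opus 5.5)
The plan is to expand everything to second order in $Y$ around the diagonal (hence normal) matrix $\Lambda$. Write $X=\Lambda+Y$. Then
\[
[X^*,X]=[\Lambda^*,Y]+[Y^*,\Lambda]+[Y^*,Y],
\]
where $[\Lambda^*,\Lambda]=0$ because $\Lambda$ is diagonal. Similarly, $\varphi(X)=\varphi(\Lambda)+L(Y)+o(\|Y\|_2)$, where $L$ is the Fréchet derivative of $X\mapsto\varphi(X)$ at $\Lambda$. Since $[X^*,X]=O(\|Y\|_2)$, the trace becomes
\[
\Tr(\varphi(\Lambda)[\Lambda^*,Y])+\Tr(\varphi(\Lambda)[Y^*,\Lambda])
+\Tr(\varphi(\Lambda)[Y^*,Y])+\Tr\bigl(L(Y)([\Lambda^*,Y]+[Y^*,\Lambda])\bigr)+o(\|Y\|_2^2).
\]
Both $\varphi(\Lambda)=\varphi_1(|\Lambda|)-\varphi_2(|\Lambda|)=:\diag(d_1,\dots,d_n)$ and $\Lambda$ are diagonal, so $\varphi(\Lambda)$ commutes with $\Lambda$ and $\Lambda^*$. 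By cyclicity the two linear terms therefore vanish. The quadratic term $\Tr(\varphi(\Lambda)[Y^*,Y])$ is computed directly as $\sum_{i<j}(d_j-d_i)|y_{ij}|^2$.

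The main step is computing $L(Y)$. I would use the block trick from the Remark in Section~2: $|\,\cdot\,|$ of $\begin{pmatrix}0&X^*\\ X&0\end{pmatrix}$ is $\diag(|X|,|X^*|)$. Extending $\varphi_k$ evenly to $[-b,b]$ gives $\varphi_k(|\,\cdot\,|)$ as a $C^1$ function of a self-adjoint matrix, so the Daleckii--Krein formula (Fréchet differentiability of $C^1$ functional calculus on Hermitian matrices) applies. This is what handles a possible zero eigenvalue, where $|X|$ itself is not smooth.

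Equivalently, wherever it is legitimate, write $\varphi_k(|X|)=g_k(X^*X)$ with $g_k(s)=\varphi_k(\sqrt s)$, so that $g_k^{[1]}(s^2,t^2)=\varphi_k^{[1]}(s,t)/(s+t)$. For $i<j$ this gives the entries
\[
L(Y)_{ij}=\frac{\varphi_1^{[1]}(|\lambda_i|,|\lambda_j|)}{|\lambda_i|+|\lambda_j|}\,\bar\lambda_i y_{ij}
-\frac{\varphi_2^{[1]}(|\lambda_i|,|\lambda_j|)}{|\lambda_i|+|\lambda_j|}\,y_{ij}\bar\lambda_j ,
\]
using $(\Lambda^*Y+Y^*\Lambda)_{ij}=\bar\lambda_i y_{ij}$ and $(\Lambda Y^*+Y\Lambda^*)_{ij}=y_{ij}\bar\lambda_j$. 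The lower-triangular entries are the adjoints of these. Pairing with $[\Lambda^*,Y]+[Y^*,\Lambda]$, whose $(j,i)$ entry for $i<j$ is $(\lambda_j-\lambda_i)\bar y_{ij}$, whose $(i,j)$ entry is $(\bar\lambda_j-\bar\lambda_i)y_{ij}$, and whose diagonal vanishes, gives an explicit sum over $i<j$ of $|y_{ij}|^2$ times coefficients built from $\lambda_i,\lambda_j$ and the divided differences.

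The final step is algebraic. For each pair $i<j$, add $d_j-d_i$ to the derivative contribution. Express $d_j-d_i$ as $\varphi_1^{[1]}(|\lambda_i|,|\lambda_j|)(|\lambda_j|-|\lambda_i|)+\varphi_2^{[1]}(|\lambda_i|,|\lambda_j|)(|\lambda_j|-|\lambda_i|)$, with the sign of the $\varphi_2$ part coming from the minus sign in $\varphi(\Lambda)$. Put everything over $|\lambda_i|+|\lambda_j|$ and use $|\lambda_i-\lambda_j|^2=|\lambda_i|^2+|\lambda_j|^2-2\Re(\bar\lambda_i\lambda_j)$. The aim is to check that the coefficient collapses to $(\varphi_1^{[1]}+\varphi_2^{[1]})|\lambda_i-\lambda_j|^2/(|\lambda_i|+|\lambda_j|)$. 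This algebraic verification, together with careful sign bookkeeping (taking real parts, since the total is real by Lemma~\ref{positive semi-definite}), is what I expect to be the main obstacle.

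Pairs with $\lambda_i=\lambda_j$ contribute nothing. In that case $d_i=d_j$, and the derivative contribution carries the factor $\lambda_j-\lambda_i=0$. This holds even when $\lambda_i=\lambda_j=0$, where the quotient $\varphi_k^{[1]}/(|\lambda_i|+|\lambda_j|)$ is undefined but the Daleckii--Krein entry from the block formulation stays bounded. This explains why the sum is restricted to $\lambda_i\neq\lambda_j$. The remainder is $o(\|Y\|_2^2)$ by Fréchet differentiability.
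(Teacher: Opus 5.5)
Your split into $\Tr(\varphi(\Lambda)[Y^*,Y])$ plus the first-order variation of $\varphi$ paired with $M=[\Lambda^*,Y]+[Y^*,\Lambda]$, with Daleckii--Krein applied to $g_k(s)=\varphi_k(\sqrt{s})$, is the paper's route. It is fine when $\Lambda$ is invertible. The gap is the singular case, which you say the block trick handles; it does not. The even extension $\varphi_k(|x|)$ is $C^1$ on $[-b,b]$ only if $\varphi_k'(0)=0$, and (C1) does not require this. Take $\varphi_1(x)=x$, $\varphi_2=0$, so $\varphi(X)=|X|$; this is the case Theorem~\ref{noninvertible} is aimed at. Then $X\mapsto\varphi(X)$ is not Fr\'echet differentiable at a $\Lambda$ with a zero eigenvalue, even along strictly upper triangular directions. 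For $\Lambda=\diag(0,0,\lambda_3)$ and $Y=yE_{12}$ one gets $|\Lambda+Y|=|y|E_{22}+|\lambda_3|E_{33}$. So the derivative $L$ you expand around need not exist, and your final appeal to ``Fr\'echet differentiability'' for the remainder fails exactly there.

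The missing idea is that you never need the full derivative. Since $M$ has zero diagonal and $\varphi(\cdot)$ is self-adjoint, $\Tr((\varphi(\Lambda+Y)-\varphi(\Lambda))M)=2\Re\sum_{i<j}(\varphi(\Lambda+Y)-\varphi(\Lambda))_{ij}(\lambda_i-\lambda_j)\overline{y_{ij}}$ holds exactly. The part paired with $[Y^*,Y]$ is $o(\|Y\|_2^2)$ by continuity of $\varphi$ alone. So only entries with $i<j$ and $\lambda_i\neq\lambda_j$ need a first-order expansion, and for those the ordering $|\lambda_1|\leq\cdots\leq|\lambda_n|$ forces $\lambda_j\neq 0$. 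Set $f_j(x)=\varphi_1^{[1]}(x,|\lambda_j|)/(x+|\lambda_j|)$, which is continuous on $[0,b]$ because $|\lambda_j|>0$. Then you have the exact identity $(\varphi_1(|\Lambda+Y|)-\varphi_1(|\Lambda|))_{ij}=\inpr{f_j(|\Lambda+Y|)(|\Lambda+Y|^2-|\Lambda|^2)e_j}{e_i}$. Continuity of the functional calculus turns this into $f_j(|\lambda_i|)\overline{\lambda_i}y_{ij}+o(\|Y\|_2)$. The $\varphi_2$ term is handled the same way with $|\Lambda^*+Y^*|$. This one-sided divided-difference argument is what the paper uses for $a=0$.

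Separately, fix two signs before the final algebra. First, $M_{ij}=(\overline{\lambda_i}-\overline{\lambda_j})y_{ij}$ and $M_{ji}=(\lambda_i-\lambda_j)\overline{y_{ij}}$; you reversed both. Second, $d_j-d_i=(\alpha-\beta)(|\lambda_j|^2-|\lambda_i|^2)$, where $\alpha=\varphi_1^{[1]}(|\lambda_i|,|\lambda_j|)/(|\lambda_i|+|\lambda_j|)$ and $\beta$ is the same with $\varphi_2$. With these signs, the $\alpha$-coefficient is $2|\lambda_i|^2-2\Re(\overline{\lambda_i}\lambda_j)+|\lambda_j|^2-|\lambda_i|^2=|\lambda_i-\lambda_j|^2$. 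The $\beta$-coefficient is $2|\lambda_j|^2-2\Re(\overline{\lambda_i}\lambda_j)+|\lambda_i|^2-|\lambda_j|^2=|\lambda_i-\lambda_j|^2$. So the collapse is immediate; with your signs it does not close.
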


\begin{proof}
A direct computation shows 
\begin{align*}
\lefteqn{\Tr(\varphi(\Lambda+Y)[(\Lambda+Y)^*,\Lambda+Y])}\\
& =\Tr(\varphi(\Lambda)([\Lambda^*,Y]+[Y^*,\Lambda]+[Y^*,Y])) \\
&+\Tr((\varphi(\Lambda+Y)-\varphi(\Lambda))([\Lambda^*,Y]+[Y^*,\Lambda]+[Y^*,Y])). 
\end{align*}
The first term is equal to 
\[\Tr([\varphi(\Lambda),Y^*]Y)=\sum_{i<j}\left(\varphi_1(|\lambda_j|)-\varphi_1(|\lambda_i|)
-\varphi_2(|\lambda_j|)+\varphi_2(|\lambda_i|)\right)|y_{ij}|^2.\]
The second term is 
\[2\Re\sum_{i<j}(\varphi(\Lambda+Y)-\varphi(\Lambda))_{ij}(\lambda_i-\lambda_j)\overline{y_{ij}}
+o(\|Y\|_2^2).\]
Thus \begin{align*}
	\lefteqn{\Tr(\varphi(\Lambda+Y)[(\Lambda+Y)^*,\Lambda+Y])}\\
	& =\sum_{i<j}[(\varphi_1(|\lambda_j|)-\varphi_1(|\lambda_i|)) |y_{ij}|^2
	+2\Re\left( (\varphi_1(|\Lambda+Y|)-\varphi_1(|\Lambda|))_{ij}(\lambda_i-\lambda_j)\overline{y_{ij}}\right)]\\
	&- \sum_{i<j}[(\varphi_2(|\lambda_j|)-\varphi_2(|\lambda_i|)) |y_{ij}|^2
	+2\Re\left( (\varphi_2(|\Lambda^*+Y^*|)-\varphi_2(|\Lambda|))_{ij}(\lambda_i-\lambda_j)\overline{y_{ij}}\right)].
\end{align*}

If $a\neq 0$, the rest of the computation follows from the usual Daleckii-Krein formula 
(see, for example, \cite[Theorem 3.33]{HP2014}, \cite[Theorem 2.3.1]{H2010}) 
applied to $\varphi_1(\sqrt{x})$ and $\varphi_2(\sqrt{x})$.  
However, if $a=0$, since $\varphi_1(\sqrt{x})$ are $\varphi_2(\sqrt{x})$ are not necessarily differentiable at $0$, 
we need a more careful argument.     
Assume $1\leq i<j\leq n$. 
We further assume $\lambda_i\neq \lambda_j$ since only such pairs $(i,j)$ have contributions to the above summation. 
Note that $\lambda_j\neq 0$ from our convention. 
Let 
\[|\lambda+Y|=\sum_{k=1}^m \nu_k F_k\]
be the spectral decomposition. 
Then we have 
\[\varphi_1(|\Lambda+Y|)-\varphi_1(|\Lambda|)=\sum_{\nu_k\neq |\lambda_l|}
\frac{\varphi_1(\nu_k)-\varphi_1(|\lambda_l|)}{\nu_k^2-|\lambda_l|^2}F_k(|\Lambda+Y|^2-|\Lambda|^2)E_{ll}.\]
Since $F_k(|\Lambda+Y|^2-|\Lambda|^2)E_{jj}=0$ for $\nu_k=|\lambda_j|$, we get 
\[(\varphi_1(|\Lambda+Y|-\varphi_1(|\Lambda)|))_{ij}=\sum_{k=1}^m\frac{\varphi^{[1]}_1(\mu_k,|\lambda_j|)}{\mu_k+|\lambda_j|}
\inpr{F_k(|\Lambda+Y|^2-|\Lambda|^2)e_j}{e_i}.\]
Letting
\[f_j(x)=\frac{\varphi^{[1]}_1(x,|\lambda_j|)}{x+|\lambda_j|},\]
which is continuous on $[a,b]$, we get 
\begin{align*}
(\varphi_1(|\Lambda+Y|)-\varphi_1(|\Lambda|))_{ij}
	&=\inpr{f_j(|\Lambda+Y|)(|\Lambda+Y|^2-|\Lambda|^2)e_j}{e_i}  \\
	&=\inpr{f_j(|\Lambda+Y|)(\Lambda^*Y+Y^*\Lambda)e_j}{e_i}+o(\|Y\|_2)  \\
	&=\inpr{f_j(|\Lambda|)(\Lambda^*Y+Y^*\Lambda)e_j}{e_i}+o(\|Y\|_2)  \\
	&=\frac{\varphi^{[1]}_1(|\lambda_i|,|\lambda_j|)}{|\lambda_i|+|\lambda_j|}\overline{\lambda_i}y_{ij}+o(\|Y\|_2). 
\end{align*}
This shows 
\begin{align*}
	&(\varphi_1(|\lambda_j|)-\varphi_1(|\lambda_i|)) |y_{ij}|^2
	+2\Re\left( (\varphi_1(|\Lambda+Y|)-\varphi_1(|\Lambda|))_{ij}(\lambda_i-\lambda_j)\overline{y_{ij}}\right)\\
	&=\left(\varphi_1(|\lambda_j|)-\varphi_1(|\lambda_i|) 
	+\frac{\varphi^{[1]}_1(|\lambda_i|,|\lambda_j|)}{|\lambda_i|+|\lambda_j|}
	(2|\lambda_i|^2-\lambda_i\overline{\lambda_j}-\overline{\lambda_i}\lambda_j)\right)|y_{ij}|^2
	+o(\|Y\|_2^2) \\
	&=\frac{\varphi^{[1]}_1(|\lambda_i|,|\lambda_j|)}{|\lambda_i|+|\lambda_j|}
	|\lambda_i-\lambda_j|^2|y_{ij}|^2+o(\|Y\|_2^2).
	\end{align*}
	
In a similar way, we can show 
\[	(\varphi_2(|\Lambda^*+Y^*|)-\varphi_2(|\Lambda^*|))_{ij}
=\frac{\varphi^{[1]}_2(|\lambda_i|,|\lambda_j|)}{|\lambda_i|+|\lambda_j|}\overline{\lambda_j}y_{ij}+o(\|Y\|_2), \]
and 
\begin{align*}
&(\varphi_2(|\lambda_j|)-\varphi_2(|\lambda_i|)) |y_{ij}|^2
+2\Re\left( (\varphi_2(|\Lambda^*+Y^*|)-\varphi_2(|\Lambda|))_{ij}(\lambda_i-\lambda_j)\overline{y_{ij}}\right)\\
&=\left(\varphi_2(|\lambda_j|)-\varphi_2(|\lambda_i|)
+\frac{\varphi^{[1]}_2(|\lambda_i|,|\lambda_j|)}{|\lambda_i|+|\lambda_j|}
(\lambda_i\overline{\lambda_j}+\overline{\lambda_i}\lambda_j-2|\lambda_j|^2)\right) |y_{ij}|^2+o(\|Y\|_2^2)\\
&=-\frac{\varphi^{[2]}_2(|\lambda_i|,|\lambda_j|)}{|\lambda_i|+|\lambda_j|}
|\lambda_i-\lambda_j|^2|y_{ij}|^2+o(\|Y\|_2^2).
\end{align*}
Therefore the statement holds. 
\end{proof}

The following is a counterpart of the main result in \cite{APS2007}.

\begin{theorem}\label{exponential} Let the notation be as above, and assume that $\varphi$ satisfies the condition (C1) and 
	$T$ is diagonalizable. 
Then the flow $\{F^\varphi_t(T)\}$ converges to a normal matrix in $\cO(T)$ exponentially fast as $t$ tends to $\infty$.  
\end{theorem}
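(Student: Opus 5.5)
The plan is to show that $\|Y(t)\|_2$ decays exponentially. Then (\ref{finitelength}) holds and $U(t)^{-1}\Lambda U(t)$ converges exponentially fast. Together these give exponential convergence of $F^\varphi_t(T)$ to a matrix $U_\infty^{-1}\Lambda U_\infty$. That limit is normal and unitarily equivalent to $\Lambda$. Since $T$ is diagonalizable with diagonal part $\Lambda$, the matrix $\Lambda$ lies in $\cO(T)$, and so does its unitary conjugate.

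\emph{Step 1: a structural constraint on $Y(t)$.} Since $T$ is diagonalizable, Remark \ref{diagonalizable}(1) gives $T_{ij}=0$ whenever $i<j$ and $\lambda_i=\lambda_j$. I would show that this vanishing persists along Eq.(\ref{upptriflow}), i.e.\ $y_{ij}(t)=0$ for all $t$ whenever $\lambda_i=\lambda_j$. One route is to argue as in Remark \ref{diagonalizable}(2): the subspace of $\mathbf{N}_n$ cut out by these conditions should be invariant under the vector field $[D(Y)+2P(Y),\Lambda+Y]$, and uniqueness then forces the solution to stay there. A simpler alternative is to observe that $\Lambda+Y(t)=U(t)F^\varphi_t(T)U(t)^{-1}$ is similar to $T$, hence diagonalizable, and upper triangular with the same diagonal $\Lambda$. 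Applying Remark \ref{diagonalizable}(1) to $\Lambda+Y(t)$ at each time $t$ gives the claim directly. I prefer this second argument, though one should check that the ordering convention of the remark is respected.

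\emph{Step 2: a differential inequality.} By Step 1, every nonzero entry $y_{ij}$ has $\lambda_i\neq\lambda_j$. Under (C1), the coefficient
\[
c_{ij}=\frac{\varphi_1^{[1]}(|\lambda_i|,|\lambda_j|)+\varphi_2^{[1]}(|\lambda_i|,|\lambda_j|)}{|\lambda_i|+|\lambda_j|}\,|\lambda_i-\lambda_j|^2
\]
is strictly positive. The divided difference is positive because $\varphi_1+\varphi_2$ is strictly increasing, and at $|\lambda_i|=|\lambda_j|\neq 0$ it is positive by the derivative condition. The denominator is nonzero because $\lambda_i\ne\lambda_j$ rules out $\lambda_i=\lambda_j=0$. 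Set $c=\min c_{ij}>0$. Lemma \ref{Hessian} then gives
\[
\Tr(\varphi(\Lambda+Y)[(\Lambda+Y)^*,\Lambda+Y])\geq c\|Y\|_2^2+o(\|Y\|_2^2).
\]
Since $\|Y(t)\|_2\to0$ by the previous theorem, there is a $t_0$ such that for $t\ge t_0$ the right-hand side is at least $(c/2)\|Y\|_2^2$. Inserting this into (\ref{Lyapunov}) gives
\[
\frac{d}{dt}\|Y\|_2^2\le -c\|Y\|_2^2,
\]
so $\|Y(t)\|_2\le Ce^{-ct/2}$.

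\emph{Step 3: convergence of the unitary part.} The Lipschitz property of $\varphi$ gives $\|[\Lambda,\varphi(\Lambda+Y)-\varphi(\Lambda)]\|_2\le K\|Y\|_2$, so (\ref{finitelength}) holds with an exponentially small tail. Hence $U(t)^{-1}\Lambda U(t)$ converges to some $N$ at an exponential rate. Combining this with $\|F^\varphi_t(T)-U(t)^{-1}\Lambda U(t)\|_2=\|Y(t)\|_2$ gives $F^\varphi_t(T)\to N$ exponentially fast.

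The main obstacle I anticipate is Step 1. Without it, entries $y_{ij}$ with $\lambda_i=\lambda_j$ have zero coefficient in Lemma \ref{Hessian}, and the quadratic lower bound fails. This is precisely where diagonalizability has to enter.
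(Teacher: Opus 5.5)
Your proposal is correct and follows the paper's proof. Diagonalizability, via Remark \ref{diagonalizable}(1) applied to $\Lambda+Y(t)$ (which is similar to $T$ and has the same contiguously ordered diagonal), kills the entries $y_{ij}$ with $\lambda_i=\lambda_j$; Lemma \ref{Hessian} together with Eq.(\ref{Lyapunov}) then gives $\frac{d}{dt}\|Y\|_2^2\le -C\|Y\|_2^2$ for large $t$, and Lipschitz continuity finishes the argument. The only difference is in the last step: the paper uses $\|P(Y)\|_2=O(\|Y\|_2)$ to obtain exponential convergence of $U(t)$ itself, whereas you only need, and prove, convergence of $U(t)^{-1}\Lambda U(t)$ via (\ref{finitelength}).
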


\begin{proof}
We first show that $Y(t)$ converges exponentially fast. 
Thanks to Remark \ref{diagonalizable}, we have $y_{ij}(t)=0$ if $\lambda_i=\lambda_j$. 
Theorem \ref{Lyapunov} and Lemma \ref{Hessian} show that there exist $C>0$ and $\tau>0$ such that 
\[\frac{d\|Y(t)\|_2^2}{dt}\leq -C\|Y(t)\|_2^2,\quad \forall t\geq \tau,\]
which implies $\|Y(t)\|_2^2\leq e^{-C(t-\tau)}\|Y(\tau)\|_2^2$ for all $t\geq \tau$. 

The Lipschitz continuity of $P$ implies that $\|\frac{dU(t)}{dt}\|_2$ converges to 0 exponentially fast, 
and $U(t)$ converges to a unitary matrix, say $U(\infty)$ exponentially fast. 
Therefore $F^\varphi_t(T)$ converges to $U(\infty)^{-1}\Lambda U(\infty)$ exponentially fast.  
\end{proof}

\begin{example} When $n=2$, Corollary \ref{sca+nil} and Theorem \ref{exponential} show that $\{F^\varphi_t(T)\}_{t\geq 0}$ 
always converges to a normal matrix under the condition (C1). 
In this case, $Y$ has only one variable $y_{12}$, which we denote by $y$ for simplicity. 
We can explicitly write down Eq.(\ref{upptriflow}) in terms of $y$ as follows. 
We denote $s_i(\Lambda+Y)=s_i(y)$ for simplicity. 
Then they are given by  
\[s_1(y)=\sqrt{\frac{|\lambda_1|^2+|\lambda_2|^2+|y|^2+\sqrt{\left((|\lambda_1|+|\lambda_2|)^2+|y|^2\right)
\left((|\lambda_1|-|\lambda_2|)^2+|y|^2\right)}}{2}},\]
\[s_2(y)=\sqrt{\frac{|\lambda_1|^2+|\lambda_2|^2+|y|^2-\sqrt{\left((|\lambda_1|+|\lambda_2|)^2+|y|^2\right)
\left((|\lambda_1|-|\lambda_2|)^2+|y|^2\right)}}{2}}.\]
We can compute $\varphi(\Lambda+Y)$ by using the interpolation polynomial (see Section \ref{interpolation}), and obtain 
\begin{align*}
\frac{dy}{dt}	 & =-\frac{\varphi_1^{[1]}(s_1(y),s_2(y))}{s_1(y)+s_2(y)}(|\lambda_1|^2+|\lambda_2|^2-2\overline{\lambda_1}\lambda_2+|y|^2)y\\
	& -\frac{\varphi_2^{[1]}(s_1(y),s_2(y))}{s_1(y)+s_2(y)}(|\lambda_1|^2+|\lambda_2|^2-2\lambda_1\overline{\lambda_2}+|y|^2)y.
\end{align*}
Letting $y(t)=|y(t)|e^{i\theta(t)}$ with $\theta(t)\in \R$, we get 
\[\frac{d|y|}{dt}=-\frac{\varphi_1^{[1]}(s_1(y),s_2(y))+\varphi_2^{[1]}(s_1(y),s_2(y))}{s_1(y)+s_2(y)}(|\lambda_1-\lambda_2|^2+|y|^2)|y|, \]
\[\frac{d\theta}{dt}=2\frac{\varphi_1^{[1]}(s_1(y),s_2(y))-\varphi_2^{[1]}(s_1(y),s_2(y))}{s_1(y)+s_2(y)}\Im \overline{\lambda_1}\lambda_2.
\]
In the case of the Haagerup flow, we get the following very simply ODE: 
\[\frac{dy}{dt}=-2(|\lambda_1-\lambda_2|^2+|y|^2)y.\]
In particular, if $\lambda_1=\lambda_2$, 
\[y(t)=\frac{y(0)}{\sqrt{1+4|y(0)|^2t}},\]
which shows how slow the convergence is in the case of non-diagonalizable $T$. 
\end{example}

In the rest of this section, we show our general convergence result under extra assumptions.
Our goal is to show the convergence (\ref{finitelength}). 
For this purpose, we need to separate fast converging variables $y_{ij}(t)$ with $\lambda_i\neq \lambda_j$ 
from the others. 
Our Lyapunov-like function at this time is 
\[\|[\Lambda, Y]\|_2^2=\sum_{i<j}|\lambda_i-\lambda_j|^2|y_{ij}|^2.\] 
Note that we have 
\begin{align}\label{Lfderivative}
\frac{d\|[\Lambda,Y(t)]\|_2^2}{dt}	 &=2\Re\Tr([\Lambda,Y(t)]^*[\Lambda,\frac{dY(t)}{dt}])  \\
&= -2\Re\Tr([\Lambda^*,Y(t)^*][\Lambda,[D(Y(t))+2P(Y(t)), \Lambda+Y(t)]]). \nonumber
\end{align}

In the proof of the previous theorem, we have seen that the following holds under the condition (C1) for 
$i<j$ with $\lambda_i\neq \lambda_j$:  
\begin{align}\label{derivative} 
\lefteqn{[\Lambda,\varphi(\Lambda+Y)]_{ij}} \\
	 &=(\lambda_i-\lambda_j)\left(  \frac{\varphi^{[1]}_1(|\lambda_i|,|\lambda_j|)}{|\lambda_i|+|\lambda_j|}\overline{\lambda_i}
	 -\frac{\varphi^{[1]}_2(|\lambda_i|,|\lambda_j|)}{|\lambda_i|+|\lambda_j|}\overline{\lambda_j} \right)y_{ij}+o(\|Y\|_2).\nonumber
\end{align}
To obtain a general convergence result, all we have to do is to turn the error term into $o(\|[\Lambda,Y]\|_2)$ 
as $Y\to 0$.

\begin{lemma}\label{convergence}
Let the notation be as above and assume that $\varphi$ satisfies the condition (C1). 
If 
\begin{align}\label{criterion}
\lefteqn{[\Lambda,\varphi(\Lambda+Y)]_{ij}} \\
	 &=(\lambda_i-\lambda_j)\left(  \frac{\varphi^{[1]}_1(|\lambda_i|,|\lambda_j|)}{|\lambda_i|+|\lambda_j|}\overline{\lambda_i}
	 -\frac{\varphi^{[1]}_2(|\lambda_i|,|\lambda_j|)}{|\lambda_i|+|\lambda_j|}\overline{\lambda_j}\right)y_{ij}
	 +o(\|[\Lambda,Y]\|_2),\quad (Y\to 0),\nonumber
\end{align}
holds for all $i<j$ with $\lambda_i\neq \lambda_j$, the flow $\{F^\varphi_t(T)\}_{t\geq 0}$ converges to a normal matrix 
as $t$ tends to $\infty$ for any $T\in \M_n[a,b]$.  
\end{lemma}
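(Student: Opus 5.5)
The plan is to prove (\ref{finitelength}) by showing that the Lyapunov-like function $\|[\Lambda,Y(t)]\|_2^2$ decays exponentially. Since $\|Y(t)\|_2\to 0$ and $\varphi$ is Lipschitz, it suffices to have $\|[\Lambda,Y(t)]\|_2\le Ce^{-ct}$. By (\ref{criterion}), the same then holds for $\|[\Lambda,\varphi(\Lambda+Y(t))-\varphi(\Lambda)]\|_2$. Indeed, its entries vanish when $\lambda_i=\lambda_j$. For $i<j$ with $\lambda_i\neq\lambda_j$, the entries are bounded by a constant times $|y_{ij}|+o(\|[\Lambda,Y]\|_2)$, hence by $O(\|[\Lambda,Y]\|_2)$. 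The lower triangular entries are handled by self-adjointness of $\varphi$: $[\Lambda,\varphi]_{ji}$ is controlled via $\overline{\varphi_{ij}}$ times $\lambda_j-\lambda_i$. So the integral in (\ref{finitelength}) is finite, $U(t)^{-1}\Lambda U(t)$ converges to some normal matrix, and since $\|Y(t)\|_2\to0$, $F^\varphi_t(T)$ converges to the same limit.

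The key computation is (\ref{Lfderivative}). I expand $[\Lambda,[D+2P,\Lambda+Y]]$ and keep only what contributes to the $(i,j)$ entries with $\lambda_i\ne\lambda_j$, since $[\Lambda,Y]^*$ only sees those.
\begin{itemize}
\item The main term is $[\Lambda,[D+2P,\Lambda]]_{ij}=-(\lambda_i-\lambda_j)[\Lambda,D+2P]_{ij}$. For $i<j$ this equals $-(\lambda_i-\lambda_j)[\Lambda,2P]_{ij}=-2(\lambda_i-\lambda_j)[\Lambda,\varphi(\Lambda+Y)]_{ij}$, because $[\Lambda,D]=0$ and the strictly upper part of $[\Lambda,\varphi]$ equals $[\Lambda,P]$.
\item Inserting (\ref{criterion}) gives the contribution
\[-4\sum_{i<j,\lambda_i\ne\lambda_j}|\lambda_i-\lambda_j|^2\,\Re\!\left(\frac{\varphi_1^{[1]}\overline{\lambda_i}-\varphi_2^{[1]}\overline{\lambda_j}}{|\lambda_i|+|\lambda_j|}(\lambda_i-\lambda_j)\right)|y_{ij}|^2+o(\|[\Lambda,Y]\|_2^2).\]
\item The real part equals $\big((\varphi_1^{[1]}+\varphi_2^{[1]})|\lambda_i-\lambda_j|^2+(\varphi_1^{[1]}-\varphi_2^{[1]})(|\lambda_i|^2-|\lambda_j|^2)\big)/\big(2(|\lambda_i|+|\lambda_j|)\big)$. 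With $\varphi_k^{[1]}(|\lambda_i|,|\lambda_j|)\bigl||\lambda_i|-|\lambda_j|\bigr|=|\varphi_k(|\lambda_i|)-\varphi_k(|\lambda_j|)|$ and $\bigl||\lambda_i|-|\lambda_j|\bigr|\le|\lambda_i-\lambda_j|$, this is bounded below by a positive multiple of $|\lambda_i-\lambda_j|^2$. The lower bound relies on positivity of $\varphi_1^{[1]}+\varphi_2^{[1]}$ away from the origin, which is (C1); note $\lambda_j\neq0$ since $|\lambda_j|\ge|\lambda_i|$ in our ordering. If the sign bookkeeping turns out to need it, the cross term is rearranged exactly as in Lemma \ref{Hessian}.
\item Hence the main term is bounded by $-c\|[\Lambda,Y]\|_2^2+o(\|[\Lambda,Y]\|_2^2)$.
\end{itemize}

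The main obstacle is the remaining term $[\Lambda,[D+2P,Y]]$. A priori it is only $O(\|Y\|_2\,\|[\Lambda,\varphi(\Lambda+Y)-\varphi(\Lambda)]\|_2+\|Y\|_2\|[\Lambda,Y]\|_2)$, and I must show it is $o(\|[\Lambda,Y]\|_2)$ after pairing with $[\Lambda,Y]^*$.
\begin{itemize}
\item Write $[D+2P,Y]=[\varphi(\Lambda+Y)+P-P^*,Y]$ and use $[\varphi(\Lambda+Y),\Lambda+Y]$-type identities.
\item Alternatively, use $[\Lambda,[A,Y]]=[[\Lambda,A],Y]+[A,[\Lambda,Y]]$. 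Both pieces are then bounded by $\|Y\|\cdot O(\|[\Lambda,Y]\|_2)$, because $\|[\Lambda,D+2P]\|_2\le 2\|[\Lambda,\varphi(\Lambda+Y)]\|_2=O(\|[\Lambda,Y]\|_2)$ by the first paragraph, and $\|D+2P\|$ is bounded.
\item Since $\|Y(t)\|\to0$, this gives $o(\|[\Lambda,Y]\|_2)$.
\end{itemize}
Combining, for $t\ge\tau$ we get $\frac{d}{dt}\|[\Lambda,Y(t)]\|_2^2\le -c\|[\Lambda,Y(t)]\|_2^2$, hence exponential decay and the conclusion.
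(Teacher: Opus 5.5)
There is a genuine gap in your treatment of the remaining term. In the splitting $[\Lambda,[D+2P,Y]]=[[\Lambda,D+2P],Y]+[D+2P,[\Lambda,Y]]$, only the first piece is $O(\|Y\|\,\|[\Lambda,Y]\|_2)$. The second piece is only $O(\|D+2P\|\,\|[\Lambda,Y]\|_2)$. You do have $P(Y)\to P(0)=0$, but the diagonal part satisfies $D(Y)\to\varphi(\Lambda)\neq 0$. Hence $[D(Y),[\Lambda,Y]]=[\varphi(\Lambda),[\Lambda,Y]]+o(\|[\Lambda,Y]\|_2)$ is of the same order as your main term. It contributes
\[2\sum_{i<j,\;\lambda_i\neq\lambda_j}|\lambda_i-\lambda_j|^2\bigl(\varphi(\Lambda)_{ii}-\varphi(\Lambda)_{jj}\bigr)|y_{ij}|^2\]
to $\frac{d}{dt}\|[\Lambda,Y]\|_2^2$, and it cannot be discarded.

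Dropping this term also makes your sign claim for the main term false. Note that $(\varphi_1^{[1]}-\varphi_2^{[1]})(|\lambda_i|^2-|\lambda_j|^2)/(|\lambda_i|+|\lambda_j|)=\varphi(\Lambda)_{ii}-\varphi(\Lambda)_{jj}$, and this cross term can dominate. Take the Aluthge flow ($\varphi_1=\log$, $\varphi_2=0$) with $\lambda_i=1$, $\lambda_j=2$. Your real part is then $-\frac{\log 2}{3}<0$, so the main term alone contributes $+\frac{4\log 2}{3}|y_{ij}|^2$ and makes $\|[\Lambda,Y]\|_2^2$ grow. Your bound would need $\bigl||\lambda_i|^2-|\lambda_j|^2\bigr|<|\lambda_i-\lambda_j|^2$, which here reads $3<1$.

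The correct bookkeeping, as in the paper, goes as follows. The main term gives $-2\sum|\lambda_i-\lambda_j|^2\bigl[\frac{\varphi_1^{[1]}+\varphi_2^{[1]}}{|\lambda_i|+|\lambda_j|}|\lambda_i-\lambda_j|^2+\varphi(\Lambda)_{ii}-\varphi(\Lambda)_{jj}\bigr]|y_{ij}|^2$. The $D$-term above cancels the $\varphi(\Lambda)_{ii}-\varphi(\Lambda)_{jj}$ part exactly. What remains is $-2\sum\frac{\varphi_1^{[1]}+\varphi_2^{[1]}}{|\lambda_i|+|\lambda_j|}|\lambda_i-\lambda_j|^4|y_{ij}|^2+o(\|[\Lambda,Y]\|_2^2)$, which is negative definite by (C1). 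In the example this gives $-\frac{2\log 2}{3}|y_{12}|^2$, consistent with the explicit $2\times 2$ formula in the paper.

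Your hedge about rearranging ``as in Lemma \ref{Hessian}'' points the right way, since there $\Tr([\varphi(\Lambda),Y^*]Y)$ plays exactly this role. However, you had already thrown away the term needed for the cancellation. The rest of your argument is fine: the $2P$ and $[[\Lambda,D+2P],Y]$ contributions are indeed $o(\|[\Lambda,Y]\|_2^2)$, and exponential decay does yield (\ref{finitelength}).
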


\begin{proof} First we claim 
\begin{align*}
	&\Re \Tr([\Lambda^*,Y^*][\Lambda,[D(Y)+2P(Y), \Lambda+Y]])  \\
	&=\sum_{i<j,\;\lambda_i\neq \lambda_j}
	\frac{\varphi^{[1]}_1(|\lambda_i|,|\lambda_j|)+\varphi^{[1]}_2(|\lambda_i|,|\lambda_j|)}{|\lambda_i|+|\lambda_j|}
	|\lambda_i-\lambda_j|^4|y_{ij}|^2+o(\|[\Lambda,Y]\|_2^2).\\
\end{align*}
A direct computation with our assumption shows 
\begin{align*}
	 &\Tr([\Lambda^*,Y^*][\Lambda,[D(Y)+2P(Y), \Lambda+Y]])  \\
	& =2\Tr([\Lambda^*,Y^*]([[\Lambda,P(Y)], \Lambda+Y]+
[D(Y)+2P(Y), [\Lambda,Y]]	)\\
&=2\Tr([\Lambda,[\Lambda^*,Y^*]][\Lambda,P(Y)])+2\Tr([\Lambda^*,Y^*][[\Lambda,P(Y)],Y])\\
&+\Tr([\Lambda^*,Y^*][D(Y),[\Lambda,Y]])+
2\Tr([\Lambda^*,Y^*][P(Y),[\Lambda,Y]])\\
&=2\Tr([\Lambda,[\Lambda^*,Y^*]][\Lambda,P(Y)]+\Tr([\Lambda^*,Y^*][\varphi(\Lambda),[\Lambda,Y]])+o(\|[\Lambda,Y]\|_2^2). 
\end{align*}
Thus 
\begin{align*}
	 & \Tr([\Lambda^*,Y^*][\Lambda,[D(Y)+2P(Y), \Lambda+Y]])  \\
	& =2\sum_{i<j,\;\lambda_i\neq \lambda_j}|\lambda_i-\lambda_j|^2[\Lambda, P(Y)]_{ij}\overline{y_{ij}}\\
	&-\sum_{i<j,\;\lambda_i\neq \lambda_j}|\lambda_i-\lambda_j|^2(\varphi(\Lambda))_{ii}-\varphi(\Lambda)_{jj})|y_{ij}|^2
	+o(\|[\Lambda,Y]\|_2^2),
\end{align*}
and 
\begin{align*}
	&\Re \Tr([\Lambda^*,Y^*][\Lambda,[D(Y)+2P(Y), \Lambda+Y]])  \\
	& =\sum_{i<j,\;\lambda_i\neq \lambda_j}|\lambda_i-\lambda_j|^2
	[  \frac{\varphi^{[1]}_1(|\lambda_i|,|\lambda_j|)}{|\lambda_i|+|\lambda_j|}
	(2|\lambda_i|^2-\overline{\lambda_i}\lambda_j-\lambda_i\overline{\lambda_j})\\
	&-\frac{\varphi^{[1]}_2(|\lambda_i|,|\lambda_j|)}{|\lambda_i|+|\lambda_j|}
	(\lambda_i\overline{\lambda_j}+\overline{\lambda_i}\lambda_j-2|\lambda_j|^2)	 
	]|y_{ij}|^2\\
	&-\sum_{i<j,\;\lambda_i\neq \lambda_j}|\lambda_i-\lambda_j|^2(\varphi(\Lambda))_{ii}-\varphi(\Lambda)_{jj})|y_{ij}|^2+o(\|[\Lambda,Y]\|_2^2)\\
	&=\sum_{i<j,\;\lambda_i\neq \lambda_j}
	\frac{\varphi^{[1]}_1(|\lambda_i|,|\lambda_j|)+\varphi^{[1]}_2(|\lambda_i|,|\lambda_j|)}{|\lambda_i|+|\lambda_j|}
	|\lambda_i-\lambda_j|^4|y_{ij}|^2+o(\|[\Lambda,Y]\|_2^2).\\
\end{align*}

The claim with Eq.(\ref{Lfderivative}) shows 
\begin{align*}
 &\frac{d\|[\Lambda,Y]\|_2^2}{dt} \\
 &=-2\sum_{i<j,\;\lambda_i\neq \lambda_j}
	\frac{\varphi^{[1]}_1(|\lambda_i|,|\lambda_j|)+\varphi^{[1]}_2(|\lambda_i|,|\lambda_j|)}{|\lambda_i|+|\lambda_j|}
	|\lambda_i-\lambda_j|^4|y_{ij}|^2+o(\|[\Lambda,Y]\|_2^2). 
\end{align*}
Therefore thanks to Theorem \ref{Lyapunov}, there exist $C>0$ and $\tau>0$ such that for all $t\geq \tau$, we have 
\[\|[\Lambda,Y(t)]\|_2^2\leq -C\|[\Lambda,Y(t)]\|^2,\] 
which implies that  $\|[\Lambda,Y(t)]\|_2$ converges to 0 exponentially fast. 
Since Eq.(\ref{criterion}) shows that $\|[\Lambda,P(Y(t))]\|_2$ converges to 0 exponentially fast too, 
we get the convergence (\ref{finitelength}). 
\end{proof}

\begin{theorem}\label{Cn-1}
Let the notation be as above and assume that 
$\varphi$ satisfies the condition (C1). 
Letting $\psi_i(x)=\varphi_i(\sqrt{x})$ for $i=1,2$, we further assume $\psi_i\in C^{n-1}[a^2,b^2]$ for $i=1,2$ 
(this is equivalent to $\varphi_i\in C^{n-1}[a,b]$ if $a> 0$). 
Then the flow $\{F^\varphi_t(T)\}_{t\geq 0}$ converges to a normal matrix 
as $t$ tends to $\infty$ for any $T\in \M_n[a,b]$.  
\end{theorem}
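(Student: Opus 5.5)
By Lemma \ref{convergence}, it suffices to verify the criterion (\ref{criterion}) for every pair $i<j$ with $\lambda_i\neq\lambda_j$. The plan is to express $\varphi_1(|\Lambda+Y|)=\psi_1(X^*X)$ and $\varphi_2(|(\Lambda+Y)^*|)=\psi_2(XX^*)$, with $X=\Lambda+Y$, through Hermite interpolation polynomials. Since $X^*X$ is an $n\times n$ positive matrix, it has at most $n$ distinct eigenvalues $\nu_1,\dots,\nu_m$ (counted with multiplicity in the interpolation sense). Let $p_Y$ be the Hermite interpolation polynomial of $\psi_1$ of degree at most $n-1$ at the eigenvalues of $X^*X$ (equivalently, the Newton form using divided differences $\psi_1[\nu_{k_1},\dots,\nu_{k_l}]$). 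Then $\psi_1(X^*X)=p_Y(X^*X)$. The hypothesis $\psi_1\in C^{n-1}[a^2,b^2]$ is exactly what is needed for all divided differences of order up to $n-1$ to be continuous functions of the nodes, including at coalescent nodes. Consequently the coefficients of $p_Y$ depend continuously on $Y$ and converge to those of $p_0$ as $Y\to 0$. Here the nodes are ordered, say, as $s_k(X)^2$, which are continuous in $Y$.

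Next write $\psi_1(X^*X)-\psi_1(\Lambda^*\Lambda)=\bigl(p_Y(X^*X)-p_Y(\Lambda^*\Lambda)\bigr)+\bigl(p_Y(\Lambda^*\Lambda)-\psi_1(\Lambda^*\Lambda)\bigr)$. The second term is diagonal. Its $(i,j)$ entry vanishes for $i\neq j$, so it does not contribute to $[\Lambda,\cdot]_{ij}$ with $i<j$ (more precisely, $[\Lambda,D]=0$ for diagonal $D$). For the first term, expand $(X^*X)^k-(\Lambda^*\Lambda)^k$ as a telescoping sum of words in $\Lambda,\Lambda^*,Y,Y^*$ containing at least one letter $Y$ or $Y^*$. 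The key observation concerns each word containing exactly one $Y$ (or $Y^*$). Its $(i,j)$ entry is a scalar multiple of $y_{ij}$ (or $\overline{y_{ji}}=0$ for $i<j$), because the other factors are diagonal. Summing these and letting the coefficients of $p_Y$ tend to those of $p_0$, one recovers exactly the Daleckii--Krein first-order term computed in Lemma \ref{Hessian}, namely $\frac{\varphi_1^{[1]}(|\lambda_i|,|\lambda_j|)}{|\lambda_i|+|\lambda_j|}\overline{\lambda_i}y_{ij}$ up to an error $o(1)\cdot|y_{ij}|$. This error is $o(\|[\Lambda,Y]\|_2)$ since $\lambda_i\neq\lambda_j$.

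The remaining words contain at least two factors from $\{Y,Y^*\}$. This is the main obstacle: such terms are a priori only $O(\|Y\|_2^2)$, and we need them to be $o(\|[\Lambda,Y]\|_2)$ after commuting with $\Lambda$. My approach is to replace the monomial basis by one adapted to $\Lambda$. Decompose $Y=Y_0+Y_1$, where $Y_0$ is the part supported on entries with $\lambda_i=\lambda_j$ (the block-diagonal part, which commutes with $\Lambda$) and $Y_1$ is the part with $\lambda_i\neq\lambda_j$. Then $\|Y_1\|_2\le C\|[\Lambda,Y]\|_2$. Any word that contains a $Y_1$ factor together with at least one additional $Y$-factor is $O(\|Y\|_2\|[\Lambda,Y]\|_2)=o(\|[\Lambda,Y]\|_2)$. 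It remains to treat words built only from $\Lambda,\Lambda^*,Y_0,Y_0^*$. These are block diagonal with respect to the eigenspace decomposition of $\Lambda$, hence commute with $\Lambda$, and so vanish in $[\Lambda,\cdot]_{ij}$. This disposes of the higher-order terms.

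The same argument applies to $\psi_2(XX^*)$, producing the $\overline{\lambda_j}$ coefficient. In the case $a=0$, the relevant divided difference of $\psi$ at $0$ must be handled through $\psi_i\in C^{n-1}[0,b^2]$, which is why the hypothesis is stated for $\psi_i$ rather than $\varphi_i$. Combining both parts verifies (\ref{criterion}), and Lemma \ref{convergence} gives the convergence.
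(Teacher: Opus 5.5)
Your proposal is correct and follows essentially the same route as the paper. You reduce to the criterion of Lemma \ref{convergence}, write $\varphi_1(|\Lambda+Y|)=p_Y(|\Lambda+Y|^2)$ with a Hermite interpolation polynomial of degree at most $n-1$ whose coefficients are continuous in $Y$, and identify the linear term through $\sum_k a_k(0)(x^k)^{[1]}(|\lambda_i|^2,|\lambda_j|^2)=\psi_1^{[1]}(|\lambda_i|^2,|\lambda_j|^2)$; when $|\lambda_i|=|\lambda_j|$ this identification relies on the derivative matching of the Hermite polynomial, which you only use implicitly. The one difference is in the remainder: the paper expands $[\Lambda,A^k]=\sum_{l}A^l[\Lambda,A]A^{k-1-l}$ with $[\Lambda,|\Lambda+Y|^2]=[\Lambda,Y^*](\Lambda+Y)+(\Lambda^*+Y^*)[\Lambda,Y]$, so every term already carries a factor $[\Lambda,Y]$ or $[\Lambda,Y^*]$ and one just lets $Y\to 0$ in the other factors, which makes your splitting $Y=Y_0+Y_1$ unnecessary, though that splitting is equally valid.
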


\begin{proof} It suffices to verify Eq.(\ref{criterion}). 
Note that there exists a polynomial of degree at most $n-1$  
\[p_Y(x)=\sum_{k=0}^{n-1}a_k(Y)x^k\]
with coefficients $a_k(Y)$ continuous functions of $Y$ satisfying 
\[\varphi_1(|\Lambda+Y|)=\psi_1(|\Lambda+Y|^2)=p_Y(|\Lambda+Y|^2),\]  
and necessarily $\psi_1(s_i(\Lambda+Y)^2)=p_Y(s_i(\Lambda+Y)^2)$. 
Moreover, we may further assume ${\psi_1}'(s_i(\Lambda+Y)^2)={p_Y}'(s_i(\Lambda+Y)^2)$ 
if the multiplicity of $s_i(\Lambda+Y)^2$ is larger than 1 (see Section \ref{interpolation}). 
Then 
\begin{align*}
\lefteqn{[\Lambda,\varphi_1(|\Lambda+Y|)]} \\
 &=\sum_{k=1}^{n-1}a_k(Y)\sum_{l=0}^{k-1}|\Lambda+Y|^{2l}\left([\Lambda,Y^*](\Lambda+Y)+(\Lambda^*+Y^*)[\Lambda,Y]
 \right)|\Lambda+Y|^{2(k-1-l)} \\
 &=\sum_{k=1}^{n-1}a_k(0)\sum_{l=0}^{k-1}|\Lambda|^{2l}\left([\Lambda,Y^*]\Lambda+\Lambda^*[\Lambda,Y]\right)|\Lambda|^{2(k-1-l)}
 +o(\|[\Lambda,Y]\|_2). 
\end{align*}
Thus for $i<j$ with $\lambda_i\neq \lambda_j$, we get 
\begin{align*}
\lefteqn{[\Lambda,\varphi_1(|\Lambda+Y|)]_{ij}} \\
&=\sum_{k=1}^{n-1}a_k(0)\sum_{l=0}^{k-1}|\lambda_i|^{2l}|\lambda_j|^{2(k-1-l)}\overline{\lambda_i}(\lambda_i-\lambda_j)y_{ij}
+o(\|[\Lambda,Y]\|_2)\\
 &=\sum_{k=1}^{n-1}a_k(0)(x^k)^{[1]}(|\lambda_i|^{2},|\lambda_j|^{2})\overline{\lambda_i}(\lambda_i-\lambda_j)y_{ij}+o(\|[\Lambda,Y]\|_2)\\
 &=\psi_1^{[1]}(|\lambda_i|^2,|\lambda_j|^2)\overline{\lambda_i}(\lambda_i-\lambda_j)y_{ij}+o(\|[\Lambda,Y]\|_2)\\
  &=\frac{\varphi_1^{[1]}(|\lambda_i|,|\lambda_j|)}{|\lambda_i|+|\lambda_i|}\overline{\lambda_i}(\lambda_i-\lambda_j)y_{ij}
  +o(\|[\Lambda,Y]\|_2). 
\end{align*}

We can estimate $[\Lambda,\varphi_2(|\Lambda^*+Y^*|)]_{ij}$ in a similar way.  
\end{proof}

When $a=0$ and $T$ has eigenvalue 0, the above result may not be very satisfactory: 
$\varphi_i(x)=x^\alpha$ with $\alpha/2\notin \N$ satisfies the assumption only if $\alpha\geq 2(n-1)$. 
In the next theorem, we relax the regularity at 0 while imposing analyticity.

For $r>0$ and $\varepsilon>0$, we set 
\[R_{r,\varepsilon}=\{z\in \C;\; 0\leq \Re z\leq r+\varepsilon,\; -\varepsilon \leq \Im z\leq \varepsilon\}.\]

\begin{theorem}\label{holomorphic} Let the notation be as above and assume that $\varphi$ satisfies the condition (C1) 
with $a=0$ and $\varphi_i(0)=0$. 
We further assume that $\varphi_i(\sqrt{x})$, $i=1,2$, 
extend to $\psi_i\in C(R_{b^2,\varphi})$ for some $\varepsilon>0$ such that they are holomorphic 
on the interior of $R_{b^2,\varepsilon}$.  
Furthermore, we assume that either of the following 2 conditions: 
\begin{itemize}
\item[$(1)$] The following integral converges for $i=1,2$: 
\[\int_{-\varepsilon}^{\varepsilon}\frac{|\psi_i(it)|}{t^2}dt<\infty.\] 
\item[$(2)$] $\varphi_2=0$, $\varphi_1'(x)/x$ is bounded on $(0,b]$, and 
\[\int_{-\varepsilon}^{\varepsilon}\frac{|\psi_1(it)|}{|t|}dt<\infty.\] 
\end{itemize}
Then the flow $\{F^\varphi_t(T)\}_{t\geq 0}$ converges to a normal matrix 
as $t$ tends to $\infty$ for any $T\in \M_n[0,b]$.  
\end{theorem}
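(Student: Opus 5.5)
By Lemma \ref{convergence}, it suffices to verify the criterion (\ref{criterion}) for every pair $i<j$ with $\lambda_i\neq\lambda_j$. I will treat $\varphi_1(|\Lambda+Y|)=\psi_1(|\Lambda+Y|^2)$; the term $\varphi_2(|\Lambda^*+Y^*|)$ is handled symmetrically. Put $A=|\Lambda+Y|^2$ and $A_0=|\Lambda|^2$. Since $\sigma(A)\subset[0,b^2]$ and $\psi_1$ is holomorphic on the interior of $R_{b^2,\varepsilon}$, I would use the Riesz--Dunford representation
\[\psi_1(A)=\frac{1}{2\pi i}\int_{\partial R}\psi_1(z)(z-A)^{-1}dz .\]
Here $\partial R$ is the boundary of $R_{b^2,\varepsilon}$. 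Its left side lies on the imaginary axis and passes through the possible eigenvalue $0$. I would justify the formula by shifting the contour to $\Re z=-\delta$, with $\psi_1$ only continuous on the boundary, and then letting $\delta\to0$. The integrability hypotheses should control exactly this singular piece, since $\|(it-A)^{-1}\|\le 1/|t|$.

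With this representation, the resolvent identity gives
\[[\Lambda,\psi_1(A)]=\frac{1}{2\pi i}\int_{\partial R}\psi_1(z)(z-A)^{-1}[\Lambda,A](z-A)^{-1}dz ,\]
where $[\Lambda,A]=[\Lambda,Y^*](\Lambda+Y)+(\Lambda^*+Y^*)[\Lambda,Y]$ as in the proof of Theorem \ref{Cn-1}. The term $(\Lambda^*+Y^*)[\Lambda,Y]$ is already $O(\|[\Lambda,Y]\|_2)$. For the term $[\Lambda,Y^*]$ I would use that $Y^*$ is strictly lower triangular, and that $[\Lambda,Y^*]_{kl}=(\lambda_k-\lambda_l)\overline{y_{lk}}$, so $\|[\Lambda,Y^*]\|_2=\|[\Lambda,Y]\|_2$. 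Hence $[\Lambda,A]=O(\|[\Lambda,Y]\|_2)$ as a whole. Replacing the two resolvents $(z-A)^{-1}$ by $(z-A_0)^{-1}$ then costs $o(1)$ times this, provided the integral converges uniformly. Once $A$ is replaced by the diagonal $A_0$, the $(i,j)$ entry becomes
\[\frac{1}{2\pi i}\int\frac{\psi_1(z)}{(z-|\lambda_i|^2)(z-|\lambda_j|^2)}dz\,\overline{\lambda_i}(\lambda_i-\lambda_j)y_{ij}=\psi_1^{[1]}(|\lambda_i|^2,|\lambda_j|^2)\,\overline{\lambda_i}(\lambda_i-\lambda_j)y_{ij},\]
which is the main term required in (\ref{criterion}).

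\textbf{Main obstacle.} The difficulty is the uniformity near $z=0$, where the resolvents blow up. The away-from-zero part of the contour is harmless. Near $z=it$ I would estimate
\[(z-A)^{-1}-(z-A_0)^{-1}=(z-A)^{-1}(A-A_0)(z-A_0)^{-1},\]
so the integrand is bounded by $|\psi_1(it)|\,t^{-2}\,\|[\Lambda,A]\|$ times a quantity tending to $0$ as $Y\to0$. Under condition (1), $\int|\psi_1(it)|/t^2\,dt<\infty$, and dominated convergence gives the required $o(\|[\Lambda,Y]\|_2)$.

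Under condition (2), only one inverse power of $t$ is affordable. Here I would exploit the structure instead. Write $(z-A)^{-1}(\Lambda+Y)^*=(\Lambda+Y)^*(z-|(\Lambda+Y)^*|^2)^{-1}$ and use the bound $\|(z-A)^{-1}(\Lambda+Y)^*\|\lesssim |t|^{-1/2}$. This saves half a power of $t$ on each side, so the singularity becomes $|t|^{-1}$. The hypothesis that $\varphi_1'(x)/x$ is bounded (i.e.\ $\psi_1'$ is bounded) would be used to control the diagonal-block contribution when $\lambda_i=0$. I would also check that the error there is still $o(\|[\Lambda,Y]\|_2)$ and not merely $O$. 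I expect this case to be the most delicate bookkeeping.
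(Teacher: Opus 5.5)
Your architecture matches the paper's proof: reduce to (\ref{criterion}) via Lemma \ref{convergence}, write $\psi_1(|\Lambda+Y|^2)$ as a Cauchy integral over $\partial R_{b^2,\varepsilon}$, factor out $[\Lambda,A]=O(\|[\Lambda,Y]\|_2)$ through the resolvent identity, and show that the remaining coefficient integrals converge to their values at $Y=0$. Case (1) is correct and is exactly the paper's dominated-convergence argument, with majorant $2|\psi_1(it)|t^{-2}$. One small correction: you cannot shift the contour to $\Re z=-\delta$, because $\psi_1$ is only defined on $R_{b^2,\varepsilon}$. Justify the functional calculus instead by indenting around $0$ inside the rectangle. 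The small arc contributes $O(\max_{|z|=r}|\psi_1(z)|)\to 0$ because $\psi_1(0)=0$, and the rest converges because $\int|\psi_1(it)|/|t|\,dt<\infty$.

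Case (2), however, has a genuine gap.

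\textbf{The claimed estimate fails.} The half-power trick does not reduce the singularity to $|t|^{-1}$. In $[\Lambda,A]=[\Lambda,Y^*]X+X^*[\Lambda,Y]$ (with $X=\Lambda+Y$), each term contains only one factor $X$ or $X^*$. So you gain $|t|^{1/2}$ on one side only and are left with $|t|^{-3/2}$. Writing $[\Lambda,A]=\Lambda A-A\Lambda$ does give $|t|^{-1}$, but then you lose the factor $\|[\Lambda,Y]\|_2$.

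\textbf{Even $|t|^{-3/2}$ would not suffice.} Take $\psi_1(z)=z+z^{1/2}e^{-1/z}$, i.e.\ $\varphi_1(x)=x^2+xe^{-1/x^2}$. It satisfies (2), yet $\int|\psi_1(it)|\,|t|^{-3/2}dt=\infty$. The obstruction is the block of singular values of $\Lambda+Y$ that tend to $0$ without vanishing (when $m^a_1>m^g_1$). On that block both resolvents cost a full $|t|^{-1}$, and the paper handles it by exact evaluation rather than domination.

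\textbf{The missing argument.} By Remark \ref{diagonalizable}(2), $\ker(\Lambda+Y)=\mathrm{span}\{e_p\}_{p\leq m^g_1}$ is fixed. Hence $\varphi_1(|\Lambda+Y|)_{ij}=0$ for $i\leq m^g_1$, and for $i>m^g_1$ both $e_i$ and $e_j$ are orthogonal to $\ker A$. Let $Q(Y)=\chi_{[0,c_1]}(|\Lambda+Y|)$ be the spectral projection of the small singular values. By the spectral gap it is Lipschitz in $Y$, and $Q(0)e_j=0$ since $\lambda_j\neq 0$, so $\|Q(Y)e_j\|=O(\|Y\|)$. Now split $e_j$.

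For the part $(I-Q(Y))e_j$, one resolvent is uniformly bounded. The integrand is then dominated by a constant times $|\psi_1(it)|/|t|$, and dominated convergence works under (2).

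For the part $Q(Y)e_j$, expand in an eigenbasis of $A$. The contour integrals become exactly $\psi_1^{[1]}(s_p^2,s_q^2)$ at positive arguments, which are bounded by $\sup|\psi_1'|$. This is where the boundedness of $\varphi_1'(x)/x=2\psi_1'(x^2)$ is used. So this part of the coefficient is $O(\|Y\|)=o(1)$, and it multiplies entries that are $O(\|[\Lambda,Y]\|_2)$.

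That also settles your ``$o$ versus $O$'' worry. You must keep $\|[\Lambda,Y]\|_2$ factored out, because an error of size $O(\|Y\|_2^2)$ would not suffice: $\|Y\|_2/\|[\Lambda,Y]\|_2$ is unbounded as $Y\to 0$.
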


\begin{proof} We verify Eq.(\ref{criterion}). 
Assume $1\leq i<j\leq n$ and $\lambda_i\neq \lambda_j$. 
Note that thanks to the integrability assumption and $\psi_i(0)=0$, the holomorphic functional calculus 
	\[\varphi(\Lambda+Y)=\frac{1}{2\pi i}\int_{\partial R_{b^2,\varepsilon}}
	\left(\frac{\psi_1(z)}{zI-|\Lambda+Y|^2}-\frac{\psi_2(z)}{zI-|\Lambda^*+Y^*|^2}\right)
	dz\]
makes sense. 
Thus 
\begin{align*}[\Lambda,P(Y)]_{ij}
	& =\frac{1}{2\pi i}\int_{\partial R_{b^2,\varepsilon}} \inpr{[\Lambda,\frac{1}{\zeta I-|\Lambda+Y|^2}]e_j}{e_j}\psi_1(\zeta)d\zeta \\
	&-\frac{1}{2\pi i}\int_{\partial R_{b^2,\varepsilon}} \inpr{[\Lambda,\frac{1}{\zeta I-|\Lambda^*+Y^*|^2}]e_j}{e_j}\psi_2(\zeta)d\zeta.
\end{align*}

The first integral is 
\begin{align*}
	&\frac{1}{2\pi i}\int_{\partial R_{b^2,\varepsilon}} \inpr{
		\frac{1}{\zeta I-|\Lambda+Y|^2}
		[\Lambda,|\Lambda+Y|^2] 
		\frac{1}{\zeta I-|\Lambda+Y|^2} e_j}{e_i}
	\psi_1(\zeta)d\zeta \\
	&=\frac{1}{2\pi i}\int_{\partial R_{b^2,\varepsilon}} \inpr{
		\frac{1}{\zeta I-|\Lambda+Y|^2}
		(\Lambda^*[\Lambda,Y]+[\Lambda,Y^*]\Lambda) 
		\frac{1}{\zeta I-|\Lambda+Y|^2} e_j}{e_i}
	\psi_1(\zeta)d\zeta \\
	&+\frac{1}{2\pi i}\int_{\partial R_{b^2,\varepsilon}} \inpr{
		\frac{1}{\zeta I-|\Lambda+Y|^2}
		(Y^*[\Lambda,Y]+[\Lambda,Y^*]Y) 
		\frac{1}{\zeta I-|\Lambda+Y|^2} e_j}{e_i}
	\psi_1(\zeta)d\zeta, 
\end{align*}
which can be computed from 
\begin{align*}
	&\frac{1}{2\pi i}\int_{\partial R_{b^2,\varepsilon}} \inpr{
		\frac{1}{\zeta I-|\Lambda+Y|^2}
		E_{kl} 
		\frac{1}{\zeta I-|\Lambda+Y|^2} e_j}{e_i}	\psi_1(\zeta)d\zeta\\ \nonumber
	&=\frac{1}{2\pi i}\int_{\partial R_{b^2,\varepsilon}} \inpr{\frac{1}{\zeta I-|\Lambda+Y|^2} e_k}{e_i}\inpr{\frac{1}{\zeta I-|\Lambda+Y|^2} e_j}{e_l}	\psi_1(\zeta)d\zeta. \nonumber
\end{align*}
Thus all we have to verify is  
\begin{align}\label{limit} 
	&\lim_{Y\to 0}
	\frac{1}{2\pi i}\int_{\partial R_{b^2,\varepsilon}} \inpr{\frac{1}{\zeta I-|\Lambda+Y|^2} e_k}{e_i}\inpr{\frac{1}{\zeta I-|\Lambda+Y|^2} e_j}{e_l}	\psi_1(\zeta)d\zeta\\ \nonumber
	&=\delta_{k,i}\delta_{l,j}\frac{1}{2\pi i}\int_{\partial R_{b^2,\varepsilon}}\frac{\psi_1(\zeta)}{(\zeta-|\lambda_i|^2)(\zeta-|\lambda_j|^2)}d\zeta,
\end{align}
and its counterpart for $|\Lambda^*+Y^*|$ as the rest of the computation is routine work. 
Note that we have 
\[\frac{1}{2\pi i}\int_{\partial R_{b^2,\varepsilon}}\frac{\psi_1(\zeta)}{(\zeta-|\lambda_i|^2)(\zeta-|\lambda_j|^2)}d\zeta
=\psi^{[1]}_1(|\lambda_i|^2,|\lambda_j|^2)
=\frac{\varphi_1^{[1]}(|\lambda_i|,|\lambda_j|)}{|\lambda_i|+|\lambda_j|}.\]

In the case (1), the Lebesgue convergence theorem verifies Eq.(\ref{limit}). 

Assume (2) now. 
Remark \ref{diagonalizable},(2) shows $y(t)_{pq}=0$ for all $1\leq p<q\leq m^g_1$, $t\geq 0$. 
Thus it suffices to verify  Eq.(\ref{limit}) under the additional condition $y_{pq}=0$ 
for all $1\leq p<q\leq m^g_1$, which we assume in the following argument.  
Then $\ker|\Lambda+Y(t)|=\mathrm{span}\{e_p\}_{p=1}^{m^g_1}$, which implies $\varphi(\Lambda+Y)_{ij}=0$ 
for $i\leq m^g_1$. 
We assume $m^g_1< i$ now. 
Since the rank of $\Lambda+Y$ is $n-m^g_1$, we have $s_p(\Lambda+Y)>0$ for $1\leq p\leq n-m^a_1$, 
and $s_p(\Lambda+Y)=0$ for $p\geq n-m^g_1+1$. 
The continuity of the singular numbers implies $s_q(\Lambda+Y)\to |\lambda_{n+1-q}|$ as $Y$ tends to 0. 
Thus there exist $\delta>0$ and $0<c_1<c_2<b$ such that for all $Y$ with $\|Y\|_2<\delta$, we have 
$s_p(\Lambda+Y)>c_2$ for $p\leq n-m^g_1$ and $s_p(\Lambda+Y)<c_1$ for $p>n-m^g_1$. 
We assume $\|Y\|_2<\delta$ in the following argument. 

Let $Q(Y)=\chi_{[0,c_1]}(|\Lambda+Y|)$, where $\chi_{[0,c_1]}$ is the characteristic function of $[0,c_1]$. 
Since we can replace $\chi_{[0,c_1]}$ with a Lipschitz function with the same values on $[0,c_1]$ and $[c_2,b]$, 
there exists $L>0$ satisfying $\|Q(Y)-Q(0)\|\leq L\|Y\|$ for all $Y$ with $\|Y\|_2<\delta$. 
Since 
\[|\inpr{\frac{1}{it I-|\Lambda+Y|^2} e_k}{e_i}\inpr{\frac{1}{it I-|\Lambda+Y|^2}(I-Q(Y)) e_j}{e_l}\psi_1(it)|
\leq \frac{|\psi_1(it)|}{|t|\sqrt{t^2+c_2^2}}, \]
the Lebesgue convergence theorem implies  
\begin{align*}
	&\lim_{Y\to 0}
	\frac{1}{2\pi i}\int_{\partial R_{b^2,\varepsilon}} \inpr{\frac{1}{\zeta I-|\Lambda+Y|^2} e_k}{e_i}\inpr{\frac{1}{\zeta I-|\Lambda+Y|^2}(I-Q(Y)) e_j}{e_l}	\psi_1(\zeta)d\zeta\\ \nonumber
	&=\delta_{k,i}\delta_{l,j}\frac{1}{2\pi i}\int_{\partial R_{b^2,\varepsilon}}\frac{\psi_1(\zeta)}{(\zeta-|\lambda_i|^2)(\zeta-|\lambda_j|^2)}d\zeta. 
\end{align*}
Thus it suffices to show 
\[\lim_{Y\to 0}
	\frac{1}{2\pi i}\int_{\partial R_{b^2,\varepsilon}} \inpr{\frac{1}{\zeta I-|\Lambda+Y|^2} e_k}{e_i}\inpr{\frac{1}{\zeta I-|\Lambda+Y|^2}Q(Y)e_j}{e_l}	\psi_1(\zeta)d\zeta=0\]
	
We choose an orthonormal basis $\{v_p\}_{p=1}^n$ of $\C^n$ satisfying $|\Lambda+Y|v_p=s_p(\Lambda+Y)v_p$. 
Since 
\begin{align*}
	&\inpr{\frac{1}{\zeta I-|\Lambda+Y|^2} e_k}{e_i}\inpr{\frac{1}{\zeta I-|\Lambda+Y|^2}Q(Y)e_j}{e_l} \\
	&\sum_{p,q}\inpr{e_k}{v_p}\inpr{v_p}{e_i}
	\inpr{Q(Y)e_j}{v_q}\inpr{v_q}{e_l}\frac{1}{\zeta-s_p(\Lambda+Y)^2}\frac{1}{\zeta-s_q(\Lambda+Y)^2},
\end{align*}
where $p$ runs from 1 to $n-m^g_1$ and $q$ runs from $n-m^a_1+1$ to $n-m^g_1$, we have 
\begin{align*}
	 &\frac{1}{2\pi i}\int_{\partial R_{b^2,\varepsilon}} \inpr{\frac{1}{\zeta I-|\Lambda+Y|^2} e_k}{e_i}\inpr{\frac{1}{\zeta I-|\Lambda+Y|^2}Q(Y)e_j}{e_l}	\psi_1(\zeta)d\zeta  \\
	&= \sum_{p,q}\inpr{e_k}{v_p}\inpr{v_p}{e_i}
	\inpr{Q(Y)e_j}{v_q}\inpr{v_q}{e_l}\psi_1^{[1]}(s_p(\Lambda+Y),s_q(\Lambda+Y)).
\end{align*} 
Since 
\[\|Q(Y)e_j\|_2=\|(Q(Y)-Q(0))e_j\|_2\leq L\|Y\|,\] 
and $|\psi_1^{[1]}(s_p(\Lambda+Y),s_q(\Lambda+Y))|\leq \|\psi_1'\|_\infty<\infty$, we get 
\begin{align*}
	&|\frac{1}{2\pi i}\int_{\partial R_{b^2,\varepsilon}} \inpr{\frac{1}{\zeta I-|\Lambda+Y|^2} e_k}{e_i}\inpr{\frac{1}{\zeta I-|\Lambda+Y|^2}Q(Y)e_j}{e_l}	\psi_1(\zeta)d\zeta|  \\
	&\leq  n^2L \|\psi_1'\|_\infty \|Y\|_2,
\end{align*} 
which finishes the proof. 
\end{proof}

\begin{example} Let $\varphi_1(x)=x^2\log(2b/x)^{-\alpha}$ with $\alpha>0$ and $\varphi_2=0$. 
Then $\varphi=(\varphi_1,\varphi_2)$ satisfies the condition (2), while it satisfies the condition (1) 
only if $\alpha>1$. 
\end{example}

The above theorem is still not applicable to $\varphi(X)=|X|$. 
We give a criterion applicable to it though we need an extra condition $m^a_1\leq m^g_1+1$. 

\begin{theorem}\label{noninvertible} Let the notation be as above and assume that $\varphi$ satisfies the condition (C1) 
with $a=0$. 
We further assume one of the following:
\begin{itemize}
	\item [$(1)$] Assume $m^a_1=m^g_1$ for $T\in \M_n[0,b]$ and $\varphi_k\in C^{n-1-m^g_1}[0,b]$ for $k=1,2$. 
	\item [$(2)$] Assume $m^a_1=m^g_1+1$ for $T\in \M_n[0,b]$, $\varphi_2=0$, and $\varphi_1\in C^{n-1-m^g_1}[0,b]$. 
\end{itemize}
Then the flow $\{F^\varphi_t(T)\}_{t\geq 0}$ converges to a normal matrix 
as $t$ tends to $\infty$.  
\end{theorem}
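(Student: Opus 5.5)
We verify the criterion (\ref{criterion}) of Lemma \ref{convergence}; the convergence then follows from that lemma. As in Remark \ref{diagonalizable}, (2), we have $y_{pq}(t)=0$ for $1\leq p<q\leq m^g_1$, so we only need (\ref{criterion}) on the set of strictly upper triangular $Y$ with this property. Write $m=m^g_1$ and $X=\Lambda+Y$. Then the first $m$ columns of $X$ vanish, so $\ker|X|\supseteq \mathrm{span}\{e_p\}_{p\leq m}$. Let $R$ be the projection onto $\mathrm{span}\{e_p\}_{p>m}$. It does not depend on $Y$ and it commutes with $\Lambda$. Hence
\[\varphi_1(|X|)=\varphi_1(0)(I-R)+\varphi_1(|X|)R,\]
and the first term commutes with $\Lambda$. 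All of the analysis therefore concerns the compression of $|X|^2$ to $R\C^n$, which equals $B^*B$, where $B$ is the $n\times(n-m)$ matrix formed by the last $n-m$ columns of $X$.

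In case (1), the lower-right $(n-m)\times(n-m)$ block of $B$ is upper triangular with nonzero diagonal, because $m^a_1=m^g_1$. So for $\|Y\|_2<\delta$ the singular values of $B$ lie in some $[c,b]$ with $c>0$, and $\psi_1(x)=\varphi_1(\sqrt x)$ is $C^{n-1-m}$ on $[c^2,b^2]$. As in the proof of Theorem \ref{Cn-1}, we choose a Hermite interpolation polynomial $p_Y$ of degree at most $n-m-1$ whose coefficients depend continuously on $Y$ and which satisfies $\varphi_1(|X|)R=p_Y(|X|^2)R$. Expanding $[\Lambda,p_Y(|X|^2)]$ as in that proof, every term contains a factor $[\Lambda,Y]$ or $[\Lambda,Y^*]$. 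This gives the $\varphi_1$ part of (\ref{criterion}) with error $o(\|[\Lambda,Y]\|_2)$, and the divided difference comes out as $\psi_1^{[1]}(|\lambda_i|^2,|\lambda_j|^2)=\varphi_1^{[1]}(|\lambda_i|,|\lambda_j|)/(|\lambda_i|+|\lambda_j|)$. For pairs with $i\leq m$ the entry is trivially handled, since the $(i,j)$ entry of $\varphi_1(|X|)-\varphi_1(|\Lambda|)$ vanishes there.

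The $\varphi_2$ term in case (1) is the step I expect to be the main obstacle, because $\ker X^*=(\mathrm{ran}\,X)^\perp$ moves with $Y$. I would write $\varphi_2(|X^*|)=\varphi_2(0)Q(Y)+\varphi_2(|X^*|)(I-Q(Y))$, where $Q(Y)$ is the projection onto $\ker X^*$, a rank-$m$ spectral projection separated from the rest of the spectrum by a gap. The key observation is that $\mathrm{ran}\,X=\mathrm{ran}\,B$, and $B$ depends on $Y$ only through its last $n-m$ columns. The entries $y_{pq}$ with $p\leq m<q$ satisfy $\lambda_p=0\neq\lambda_q$, so they are controlled by $\|[\Lambda,Y]\|_2$. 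The remaining entries lie in the invertible block and do not change $\mathrm{ran}\,B$ to first order on the relevant components. From this I expect $Q(Y)-Q(0)=O(\|[\Lambda,Y]\|_2)$ together with an explicit first-order term. I would compute the contribution of $\varphi_2(0)[\Lambda,Q(Y)]$ and check that it combines with the interpolation part, applied on $(I-Q(Y))$ with a polynomial of degree at most $n-m-1$, to give exactly $-\frac{\varphi_2^{[1]}(|\lambda_i|,|\lambda_j|)}{|\lambda_i|+|\lambda_j|}\overline{\lambda_j}(\lambda_i-\lambda_j)y_{ij}$ up to $o(\|[\Lambda,Y]\|_2)$. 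If the bookkeeping is awkward, I would instead use that $\varphi_2(|X^*|)=U\varphi_2(|X|)U^*$ on $\mathrm{ran}\,X$ and transport the case already handled for $|X|$.

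In case (2), $\varphi_2=0$, so only $|X|$ appears and $R$ still reduces the problem to $B^*B$. Now, however, $B$ has exactly one singular value tending to $0$, coming from the single Jordan block at $0$ beyond the kernel. So instead of working with $\psi_1$ near $0$, I would interpolate $\varphi_1$ itself as a function of $|X|R$. Concretely, I would take a polynomial $q_Y$ of degree at most $n-m-1$ in $x$ agreeing with $\varphi_1$ (and with its derivatives at repeated nodes) at the $n-m$ singular values of $B$; this is possible since $\varphi_1\in C^{n-1-m}[0,b]$ and its coefficients are continuous in $Y$ with limits given by the data at $0$. Then I would expand $[\Lambda,q_Y(|X|)]$. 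The commutators $[\Lambda,|X|^k]$ are not directly linear in $[\Lambda,Y]$, so I would express $|X|$ on $R\C^n$ through $|X|^2$ by means of the divided-difference (Daleckii--Krein) formula relative to the nonzero spectrum. The one small singular value contributes only through entries $y_{pq}$ with $\lambda_p=0\neq\lambda_q$, or through $y_{pq}$ with $p,q$ inside the zero block, and the latter do not enter the $(i,j)$ entries with $\lambda_i\neq\lambda_j$ at first order. This yields (\ref{criterion}) and completes the verification.
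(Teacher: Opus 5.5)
Your case (1) follows the paper. $R$ plays the role of the paper's convention $A(Y)^0=e(A(Y))$ with $[\Lambda,e(A(Y))]=0$, and your $Q(Y)$ is the paper's $I-e(B(Y))$. The step you only announce is true, and it is what the paper computes with a contour integral. Indeed $\ker X^*$ is the graph of $-(B_1C^{-1})^*$, where $B_1$ is the block of entries $y_{pq}$ with $p\le m<q$ and $C$ is the invertible lower block. Hence $[\Lambda,Q(Y)]_{ij}=y_{ij}+o(\|[\Lambda,Y]\|_2)$ for $i\le m<j$, and $[\Lambda,Q(Y)]_{ij}=o(\|[\Lambda,Y]\|_2)$ for $i>m$. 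Let $q$ be the interpolation polynomial for $\varphi_2(\sqrt{x})$ used on $I-Q(Y)$. Since its constant term acts through $[\Lambda,I-Q(Y)]$, the coefficient of $[\Lambda,Q(Y)]$ is $\varphi_2(0)-q(0)$, not $\varphi_2(0)$. With that correction, $[\Lambda,\varphi_2(|X^*|)]_{ij}=-(\varphi_2(|\lambda_j|)-\varphi_2(0))y_{ij}+o(\|[\Lambda,Y]\|_2)$ for $i\le m<j$, which is what (\ref{criterion}) requires.

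The genuine gap is in case (2). Interpolating $\varphi_1$ in the variable $|X|$ does give coefficients continuous up to $Y=0$. However, it moves the whole difficulty into the claim
\[\|[\Lambda,|X|]\|=O(\|[\Lambda,Y]\|_2),\qquad
[\Lambda,|X|]_{ij}=\frac{\overline{\lambda_i}(\lambda_i-\lambda_j)}{|\lambda_i|+|\lambda_j|}\,y_{ij}+o(\|[\Lambda,Y]\|_2).\]
This is the criterion (\ref{criterion}) for $\varphi=(x,0)$, which is exactly the case the theorem is designed for, and your last two sentences do not prove it. The function $\sqrt{x}$ is not $C^1$ at $0$, its divided difference $(\sqrt{x}+\sqrt{y})^{-1}$ is unbounded near $(0,0)$, and $B^*B$ has an eigenvalue tending to $0$. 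So neither continuity of the Daleckii--Krein formula nor the argument of Theorem \ref{Cn-1} applies as it stands.

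Moreover, ``does not enter at first order'' is the wrong yardstick, because the error must be $o(\|[\Lambda,Y]\|_2)$. The entries $y_{p,m+1}$ with $p\le m$ drive the small singular value, yet they have $\lambda_p=\lambda_{m+1}=0$ and are not controlled by $\|[\Lambda,Y]\|_2$ at all. A term quadratic in them is of higher order in $Y$ but need not be $o(\|[\Lambda,Y]\|_2)$. What you need is that every contribution carries a factor $[\Lambda,|X|^2]$ whose coefficient converges as $Y\to0$. The paper gets this by interpolating $\psi_1(x)=\varphi_1(\sqrt{x})-\varphi_1(0)$ in $|X|^2$ and using Lemma \ref{extension}. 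Because $\psi_1(0)=0$, the divided differences of $\psi_1$ extend continuously when one node tends to $0$.

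Within your route, you can close the gap as follows.
Let $s$ be the small singular value of $B$ and $P_0$ its spectral projection, with $c$ below all nonzero $|\lambda_p|$. Pick $h\in C^1[0,b^2]$ with $h(0)=0$ and $h(x)=\sqrt{x}$ on $[c^2,b^2]$, so that $|X|=h(|X|^2)+(s-h(s^2))P_0$.
The commutator $[\Lambda,h(|X|^2)]$ is the Fr\'echet derivative of $h$ at $|X|^2$ applied to $[\Lambda,|X|^2]$. It depends continuously on $Y$ because $h\in C^1$.
The Riesz projection of $|X|^2$ for its spectrum in $[0,c^2/2]$ is $P_0+I-R$. Since $[\Lambda,R]=0$, this gives $\|[\Lambda,P_0]\|=O(\|[\Lambda,|X|^2]\|)$, while $s-h(s^2)\to0$.
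Evaluating the derivative at $|\Lambda|^2$ then yields the displayed first-order term.
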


\begin{proof} We verify Eq.(\ref{criterion}). 
Let $1\leq i<j\leq n$ with $\lambda_i\neq \lambda_j$. 
	
We set $\psi_k(x)=\varphi_k(\sqrt{x})-\varphi_k(0)$ for $k=1,2$.   
Let $A(Y)=(\Lambda+Y)^*(\Lambda+Y)$ and $B(Y)=(\Lambda+Y)(\Lambda+Y)^*$. 
Then $A(Y)$ and $B(Y)$ are positive matrices of rank $n-m^g_1$ with eigenvalues $\{s_p(\Lambda+Y)^2\}_{p=1}^n$. 
(Recall that we assume $y_{st}=0$ for $1\leq s<t\leq m_g$.) 
Let $e(A(Y))$ (respectively $e(B(Y))$) be the support projections of $A(Y)$ (respectively $B(Y)$); 
that is, the projection onto the orthogonal complement of $\ker(\Lambda+Y)$ (respectively $\ker(\Lambda+Y)^*$). 

We try to apply the same argument as in the proof of Theorem \ref{Cn-1} by using the interpolation polynomial of degree 
at most $n-1-m^g_1$. 
Let 
\[q_{k,Y}(t)=\sum_{p=0}^{n-1-m^g_1}b_{k,p}(Y)t^p,\]
be the Hermite interpolation polynomial for $\psi_k$ at $\{s_p(\Lambda+Y)^2\}_{p=1}^{n-m^g_1}$ 
(see Section \ref{interpolation}).  
There are two extra points we need to take care of:   
To ensure $\psi_1(A(Y))=q_{1,Y}(A(Y))$ and $\psi_2(B(Y))=q_{2,Y}(B(Y))$, 
we should interpret $A(Y)^0$ and $B(Y)^0$ as $e(A(Y))$ and $e(B(Y))$, respectively. 
We use this convention in the following argument. 
Another point is that the equality $q_{k,Y}(0)=\psi_k(0)$ does not hold.  
The first point matters in that we have $[\Lambda,e(B(Y))]\neq 0$ in general  
(note that $[\Lambda,e(A(Y))]=0$ holds since $\ker (\Lambda+Y)=\mathrm{span}\{e_i\}_{i=1}^{m^g_1}$). 
The second point matters only when we estimate $[\Lambda,\varphi(\Lambda+Y)]_{ij}$ for $1\leq i\leq m^g_1$. 

(1) Since $\lim_{Y\to 0}s_p(\Lambda+Y)=|\lambda_p|>0$ for $1\leq p\leq n-m^g_1$, there exist $\delta>0$ and $0<c<b$ such that 
$\|Y\|_2<\delta$ assures $s_p(\Lambda+Y)>c$ for $1\leq p\leq n-m^g_1$. 
Note that we have $\psi_k\in C^{n-m^g_1}([c^2,b^2])$. 
Thus the same argument as in the proof of Theorem \ref{Cn-1} works provided that we can take care of the above two points. 
When $1\leq i\leq m^g_2$, we have $\psi_1(A(Y))_{ij}=0$ and there is no problem with the estimate of 
$[\Lambda,\psi_1(A(Y))]_{ij}$. 

Taking an appropriate contour $C$ surrounding $[c^2,b^2]$ but not 0, we can express $e(B(Y))$ as 
\[e(B(Y))=\frac{1}{2\pi i}\int_C\frac{d\zeta}{\zeta I-B(Y)},\]
and 
\[[\Lambda,e(B(Y))]=\frac{1}{2\pi i}\int_C\frac{1}{\zeta I-B(Y)}[\Lambda,B(Y)]\frac{1}{\zeta I-B(Y)}d\zeta.\]
Since we have 
\begin{align*}
 & \lim_{Y\to 0}\frac{1}{2\pi i}\int_C\inpr{\frac{1}{\zeta I-B(Y)}E_{kl}\frac{1}{\zeta I-B(Y)}e_j}{e_i}d\zeta\\
 &=\delta_{k,i}\delta_{l,j}\frac{1}{2\pi i}\int_C\frac{1}{(\zeta-|\lambda_i|^2)(\zeta-|\lambda_j|^2)}d\zeta \\
 &=\left\{
\begin{array}{ll}
\frac{1}{|\lambda_j|^2} , &\quad 1\leq i\leq m^g_1,  \\
0 , &\quad m^g_1 < i,
\end{array}
\right.
\end{align*}
we get 
\[[\Lambda,e(B(Y))]_{ij}=\left\{
\begin{array}{ll}
	-y_{ij}+o(\|[\Lambda,Y]\|_2) , &\quad 1\leq i\leq m^g_1,  \\
	o(\|[\Lambda,Y]\|_2) , &\quad m^g_1 < i.
\end{array}
\right.\]
Thus for $m^g_1<i$, the same argument as in the proof of Theorem \ref{Cn-1} works. 
Assume $1\leq i\leq m^g_1$. 
Then 
\begin{align*}
	&[\Lambda,\psi_2(B(Y))]_{ij}  \\
	&=b_{2,0}(Y)[\Lambda,e(B(Y))]_{ij}+\sum_{p=1}^{n-1-m^g_1}b_{2,p}(Y)[\Lambda,B(Y)^p]_{ij}\\
	&=b_{2,0}(Y)[\Lambda,e(B(Y))]_{ij}+\sum_{p=1}^{n-1-m^g_1}\sum_{k=0}^{p-1}b_{2,p}(Y)(B(Y)^k[\Lambda,B(Y)]B(Y)^{p-1-k})_{ij}\\
	&=-b_{2,0}(0)y_{ij}+\sum_{p=1}^{n-1-m^g_1}\sum_{k=0}^{p-1}b_{2,p}(0)(|\Lambda|^{2k}[\Lambda,Y]\Lambda^* |\Lambda|^{2(p-1-k)})_{ij}+	o(\|[\Lambda,Y]\|_2).
\end{align*} 
Since $\lambda_i=0$, the only terms with $k=0$ survive, and we get 
\begin{align*}
[\Lambda,\psi_2(B(Y))]_{ij} 
&=-b_{2,0}(0)y_{ij}-\sum_{p=1}^{n-1-m^g_1}b_{2,p}(0)y_{ij}|\lambda_j|^{2p}+o(\|[\Lambda,Y]\|_2)\\
&=-\psi_2(|\lambda_j|^2)y_{ij}+o(\|[\Lambda,Y]\|_2)\\
&=-\psi_2^{[1]}(|\lambda_i|^2,|\lambda_j|^2)(\lambda_i-\lambda_j)\overline{\lambda_j}y_{ij}+o(\|[\Lambda,Y]\|_2). 
\end{align*} 
Therefore Eq.(\ref{criterion}) is verified. 

(2) We have $\lim_{Y\to 0}s_p(\Lambda+Y)=|\lambda_p|>0$ for $1\leq p\leq n-m^g_1-1$ and  $\lim_{Y\to 0}s_{n-m^g_1}(\Lambda+Y)=0$ now. 
Since $\psi_1 \in C[0,b^2]\cap C^{n-1-m^g_1}(0,b^2]$ and $\psi_1(0)=0$,  
we can apply Lemma \ref{extension} to $\psi_1$, and the statement follows from the same argument as above.  
\end{proof}

\begin{remark} The reason we assume $\varphi_2=0$ in (2) is that we cannot estimate $[\Lambda, e(B(Y))]$ 
by using the holomorphic functional calculus because 
\[\lim_{Y\to 0}s_{n-m^g_1}(\Lambda+Y)=0.\]
As we see below, we can directly estimate $[\Lambda, e(B(Y))]$ when $n$ is small. 
\end{remark}

We summarize the situation in the 3-by-3 matrix case. 

\begin{proposition} Let the notation be as above with $n=3$ and assume that $\varphi$ satisfies the condition (C1). 
\begin{itemize}
	\item [$(1)$] Assume $a=0$ and $\ker T\neq \{0\}$.   
	Then the flow $\{F^\varphi_t(T)\}_{t\geq 0}$ converges to a normal matrix as $t$ tends to $\infty$.  	
	\item [$(2)$] Assume that $a>0$ and $\varphi_i\in C^2[a,b]$ for $i=1,2$.  
	Then the flow $\{F^\varphi_t(T)\}_{t\geq 0}$ converges to a normal matrix as $t$ tends to $\infty$.  
\end{itemize}
\end{proposition}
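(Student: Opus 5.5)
The plan is a case analysis on the Jordan structure of the upper triangular $T$. In each case I either reduce to a result already proved in Section 3, or verify the criterion (\ref{criterion}) of Lemma \ref{convergence} by hand. Part (2) is immediate: when $a>0$ the condition $\varphi_i\in C^2[a,b]$ is equivalent to $\psi_i(x)=\varphi_i(\sqrt{x})\in C^{2}[a^2,b^2]=C^{n-1}[a^2,b^2]$, so Theorem \ref{Cn-1} applies directly. All the work is in part (1).

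For part (1) we have $\mu_1=0$ and $m^g_1\geq 1$. If $T$ is diagonalizable, Theorem \ref{exponential} gives the result. If $\sigma(T)=\{0\}$, then $T$ is nilpotent and Corollary \ref{sca+nil} applies. If $m^a_1=m^g_1$, then Theorem \ref{noninvertible}(1) applies. That theorem only requires $\varphi_k\in C^{n-1-m^g_1}[0,b]$, which is $C^1$ or $C^0$ here, so (C1) suffices. It does not matter whether the remaining eigenvalues are semisimple; in particular this case covers $0$ simple with a nonzero double non-diagonalizable eigenvalue.

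The only remaining configuration is $m^a_1=2$, $m^g_1=1$, with a simple nonzero eigenvalue $\mu=\mu_2$. Here $\Lambda=\diag(0,0,\mu)$, and the pairs $i<j$ with $\lambda_i\neq\lambda_j$ are $(1,3)$ and $(2,3)$. If $\varphi_2=0$, Theorem \ref{noninvertible}(2) finishes. In general, I follow the proof of Theorem \ref{noninvertible} with interpolation polynomials of degree $n-1-m^g_1=1$:
\[
\psi_1(A(Y))=b_{1,0}(Y)e(A(Y))+b_{1,1}(Y)A(Y),\qquad \psi_2(B(Y))=b_{2,0}(Y)e(B(Y))+b_{2,1}(Y)B(Y).
\]
The $\psi_1$ part is handled exactly as in Theorem \ref{noninvertible}(2) via Lemma \ref{extension}, using $[\Lambda,e(A(Y))]=0$. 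For the $\psi_2$ part, the coefficient $b_{2,1}(Y)=\psi_2^{[1]}(s_1^2,s_2^2)$ tends to $\psi_2^{[1]}(|\mu|^2,0)$. This limit is finite because $\varphi_2\in C^1[0,b]$. The term $b_{2,1}(Y)[\Lambda,B(Y)]$ then has the correct linear part plus $O(\|Y\|_2\|[\Lambda,Y]\|_2)$, as in Theorem \ref{Cn-1}. The remaining issue is the term $b_{2,0}(Y)[\Lambda,e(B(Y))]$. The contour argument is unavailable here, because $s_2(\Lambda+Y)\to 0$ (this is the Remark after Theorem \ref{noninvertible}).

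The key step, which I expect to be the main obstacle, is to compute $e(B(Y))$ explicitly in this $3\times 3$ situation. Since $F^\varphi_t(T)\in\cO(T)$, the rank of $\Lambda+Y(t)$ stays equal to $2$. This forces $y_{12}(t)\neq 0$, because otherwise the columns $(0,0,0)^T$, $(y_{12},0,0)^T$, $(y_{13},y_{23},\mu)^T$ would span a space of dimension at most $1$. Hence
\[
\mathrm{ran}(\Lambda+Y)=\mathrm{span}\{e_1,\,(0,y_{23},\mu)^T\},
\]
so $I-e(B(Y))=ww^*/\|w\|^2$ with $w=(0,\overline{\mu},-\overline{y_{23}})^T$. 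Since $[\Lambda,\cdot]_{i3}=(\lambda_i-\mu)(\cdot)_{i3}$, this gives $[\Lambda,e(B(Y))]_{13}=0$ and $[\Lambda,e(B(Y))]_{23}=O(|y_{23}|)=O(\|[\Lambda,Y]\|_2)$. Moreover $b_{2,0}(Y)=\psi_2(s_2^2)-b_{2,1}(Y)s_2^2\to\psi_2(0)=0$, so the whole $e(B(Y))$ contribution is $o(\|[\Lambda,Y]\|_2)$.

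Collecting the linear terms reproduces the right-hand side of (\ref{criterion}) for $(i,j)=(1,3),(2,3)$, with the divided differences $\psi_k^{[1]}(|\lambda_i|^2,|\lambda_j|^2)=\varphi_k^{[1]}(|\lambda_i|,|\lambda_j|)/(|\lambda_i|+|\lambda_j|)$. Lemma \ref{convergence} then gives convergence to a normal matrix.
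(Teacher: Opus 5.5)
Your proposal is correct and follows essentially the same route as the paper. Part (2) is Theorem \ref{Cn-1}. The only configuration not covered by earlier results ($m^a_1=2$, $m^g_1=1$, $\varphi_2\neq 0$) is handled exactly as you do: you write $\psi_2(B(Y))=b_{2,0}(Y)e(B(Y))+b_{2,1}(Y)B(Y)$, use the explicit kernel vector $(0,\overline{\lambda_3},-\overline{y_{23}})^T$ of $(\Lambda+Y)^*$ to get $[\Lambda,e(B(Y))]=O(y_{23})$, and note $b_{2,0}(Y)\to 0$. Your justification that the rank stays $2$ via $F^\varphi_t(T)\in\cO(T)$, which forces $y_{12}\neq 0$, fills in a step the paper leaves implicit.
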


\begin{proof}
The only case not covered by our results so far is $m^g_1=1$ and $m^a_1=2$ with $\varphi_2\neq 0$. 
We show the convergence in this case. 
Assume that $\lambda_1=\lambda_2=0$, $\lambda_3\neq 0$, and the rank of $\Lambda+Y$ is 2. 
We use the notation in the proof of the previous theorem. 
Then there is no problem in estimating $[\Lambda,\psi_1(A(Y))]$, and 
we need to estimate $[\Lambda,\psi_2(B(Y))]_{13}$ and $[\Lambda,\psi_2(B(Y))]_{23}$. 

We denote $s_i(\Lambda+Y)^2=t_i(Y)$ for simplicity. 
Using Proposition \ref{symmetric formula}, we get 
\begin{align*}
	&\psi_2(B(Y))  \\
	&= \frac{\psi_2(t_1(Y))-\psi_2(t_2(Y))}{t_1(Y)-t_2(Y)}B(Y)+\frac{t_1(Y)\psi_2(t_2(Y))-t_2(Y)\psi_2(t_1(Y))}{t_1(Y)-t_2(Y)}e(B(Y)).
\end{align*}
Since $\ker(\Lambda+Y)^*$ is spanned by $(0,\overline{\lambda_3},-\overline{y_{23}})^T$, we have
\[e(B(Y))=I-\frac{1}{|\lambda_3|^2+|y_{23}|^2}\begin{pmatrix}
	0 & 0 & 0 \\
	0 & |\lambda_3|^2 & -\overline{\lambda_3}y_{23} \\
	0 & -\lambda_3\overline{y_{23}} & |y_{23}|^2
\end{pmatrix}.\]
and 
\[[\Lambda,e(B(Y))]=
\frac{1}{|\lambda_3|^2+|y_{23}|^2}\begin{pmatrix}
	0 & 0 & 0 \\
	0 & 0 & -|\lambda_3|^2y_{23} \\
	0 & \lambda_3^2\overline{y_{23}} & 0
\end{pmatrix},\]
which shows $[\Lambda,e(B(Y))]=O(y_{23})$.
Since 
\[\lim_{Y\to 0}\frac{t_1(Y)\psi_2(t_2(Y))-t_2(Y)\psi_2(t_1(Y))}{t_1(Y)-t_2(Y)}=0,\]
we get 
\begin{align*}
	[\Lambda,\psi_2(B(\Lambda))]_{ij}&=\psi_2^{[1]}(t_1(Y),t_2(Y))[\Lambda,B(Y)]_{ij}+o(\|[\Lambda,Y]\|_2) \\
	&=\frac{\varphi_2^{[1]}(|\lambda_3|^2,0)}{|\lambda_3|}(\lambda_i-\lambda_j)\overline{\lambda_j}y_{ij}+o(\|[\Lambda,Y]\|_2).  
\end{align*}
\end{proof}

We still do not know if the convergence holds in general for $\varphi(X)=|X|$.   
The first test case is $n=4$, $m^a_1=3$, and $m^g_1=1$. 
In this case, we get the following formula from Proposition \ref{symmetric formula}:
\begin{align*}
\lefteqn{[\Lambda,|\Lambda+Y|]} \\
 &=\frac{1}{(s_1+s_2)(s_2+s_3)(s_3+s_1)}[\Lambda,A(Y)^2-\Tr(A(Y))A(Y)]\\
 &+\frac{s_1s_2+s_2s_3+s_3s_1}{(s_1+s_2)(s_2+s_3)(s_3+s_1)}[\Lambda,A(Y)],
\end{align*}
where $s_i=s_i(\Lambda+Y)$ and $A(Y)=(\Lambda+Y)^*(\Lambda+Y)$ with $\Lambda=\diag(0,0,0,\lambda_4)$. 
Since 
\[\lim_{Y\to 0}s_2(\Lambda+Y)=\lim_{Y\to 0}s_3(\Lambda+Y)=0,\]
we encounter difficulties in estimating the first term. 
This example suggests that for further analysis, we need a lower estimate of the singular numbers 
$s_e(\Lambda+Y)$ that approach 0 as $Y$ tends to 0.

\section{The case of Hilbert space operators}
Throughout this section, we assume that $H$ is a separable infinite dimensional Hilbert space. 
We denote by $\B(H)_{\mathrm{sa}}$ the set of self-adjoint operators in $\B(H)$. 
For $0\leq a<b$, we let 
\[\B(H)_{\mathrm{sa}}[a,b]=\{A\in \B(H)_{\mathrm{sa}};\; \sigma(A)\subset [a,b]\},\]
\[L\B(H)[a,b]=\{T\in \B(H);\; \sigma(|T|)\subset [a,b]\},\]
\[R\B(H)[a,b]=\{T\in \B(H);\; \sigma(|T^*|)\subset [a,b]\}.\]

We first generalize Lemma \ref{decreasing} by using the Dini derivatives. 
For $X\in \B(H)$, we set 
\[m(X)=\inf_{\|x\|=1}\|Xx\|=\min\sigma(|X|).\]
Note that $X$ is left invertible if and only if $m(X)>0$.  
 
\begin{lemma}\label{decreasing2} Let $\varphi=(\varphi_1,\varphi_2)$ be a pair of functions satisfying the condition (C0), and 
let $T\in L\B(H)[a,b]\cap R\B(H)[a,b]$. 
Let $X(t)$ be a solution of Eq.(\ref{ODE}) in $\B(H)$ defined on $t\in [0,\tau]$, where the derivative 
is taken with respect to the operator norm. 
Then $\|X(t)\|$ is decreasing with respect to $t$ on $[0,\tau]$. 
If moreover  $a>0$, then $m(X(t))$ is increasing.     
\end{lemma}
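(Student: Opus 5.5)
The plan is to work with the upper right Dini derivative of the continuous function $f(t)=\|X(t)\|^2$, show $D^+f(t)\le 0$ on $[0,\tau)$, and conclude that $f$ is decreasing. This conclusion is the standard fact that a continuous function with nonpositive upper right Dini derivative everywhere is nonincreasing. For $m(X(t))$ the plan is symmetric: show the lower right Dini derivative of $m(X(t))^2$ is $\ge 0$.

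\textbf{Step 1 (Danskin-type bound).} Fix $t$ and write $X=X(t)$, $c=\|X\|$, $\varphi=\varphi(X)$. We have
\[\|X(t+h)x\|^2=\|Xx\|^2+2h\Re\inpr{[\varphi,X]x}{Xx}+o(h)\]
uniformly in unit vectors $x$, since $X$ is norm-differentiable. Taking suprema, we get
\[D^+f(t)\le \lim_{\delta\to 0}\sup\{2\Re\inpr{[\varphi,X]x}{Xx};\ \|x\|=1,\ \|Xx\|^2\ge c^2-\delta\}.\]
So it suffices to show that this quantity is $\le o(1)$ as $\delta\to0$ for all $\delta$-almost maximizing unit vectors $x$.

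\textbf{Step 2 (approximate eigenvectors).} For such $x$ we have
\[\|(c^2-|X|^2)x\|^2\le c^2\inpr{(c^2-|X|^2)x}{x}\le c^2\delta,\]
so $x$ is an approximate eigenvector of $|X|^2$ for $c^2$. Similarly $y=Xx/c$ is almost a unit vector, and
\[|X^*|^2y=X|X|^2x/c\approx c^2y.\]
Approximating $\varphi_1(\sqrt{s})$ and $\varphi_2(\sqrt{s})$ uniformly on $[0,b^2]$ by polynomials gives $\varphi_1(|X|)x\approx\varphi_1(c)x$ and $\varphi_2(|X^*|)y\approx\varphi_2(c)y$, with errors tending to $0$ as $\delta\to0$. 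Then expand:
\[\Re\inpr{[\varphi,X]x}{Xx}=c^2\inpr{\varphi y}{y}-\Re\inpr{\varphi x}{|X|^2x}.\]
Up to $o(1)$ this equals
\[c^2\bigl(\inpr{\varphi_1(|X|)y}{y}-\varphi_1(c)\bigr)+c^2\bigl(\inpr{\varphi_2(|X^*|)x}{x}-\varphi_2(c)\bigr).\]
Both brackets are $\le 0$, because $\sigma(|X|),\sigma(|X^*|)\subset[0,c]$ and $\varphi_1,\varphi_2$ are increasing (condition (C0)). This is the infinite-dimensional substitute for Haagerup's trace argument in Lemma \ref{decreasing}, and the rest of the proof is bookkeeping.

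\textbf{Step 3 (the quantity $m(X)$).} Repeat the argument with $m=m(X)^2=\inf\inpr{|X|^2x}{x}$, using almost minimizing vectors and the lower right Dini derivative. Now $x$ and $y=Xx/m(X)$ are approximate eigenvectors of $|X|^2$ and $|X^*|^2$ for the value $m$. The same expansion gives brackets of the form $\inpr{\varphi_1(|X|)y}{y}-\varphi_1(m(X))\ge0$, which uses $\sigma(|X|)\subset[m(X),\infty)$. It also gives $\inpr{\varphi_2(|X^*|)x}{x}-\varphi_2(m(X))\ge0$, which needs $\sigma(|X^*|)\ge m(X)$ as well.

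I expect this last inequality to be the main obstacle: it requires $X(t)$ to stay invertible, not merely left invertible. Initially $T$ is invertible, since $T\in L\B(H)[a,b]\cap R\B(H)[a,b]$ with $a>0$. Along the flow one has $X(t)=V(t)^{-1}TV(t)$ with $V$ solving the linear equation as in Proposition \ref{orbit}, so invertibility persists, and then $\min\sigma(|X^*|)=\min\sigma(|X|)=m(X)$. With that in hand, the sign argument goes through and $m(X(t))$ is increasing.
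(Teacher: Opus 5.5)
Your proposal is correct and is essentially the paper's proof: a Dini-derivative argument with approximate eigenvectors of $|X(t)|$ at $\|X(t)\|$ (resp.\ $m(X(t))$), and the same splitting of $\Re\inpr{[\varphi(X),X]x}{Xx}$ into a $\varphi_1$-bracket and a $\varphi_2$-bracket whose signs come from monotonicity; the only difference is that you bound the right Dini derivative through near-maximizers at the base time, while the paper writes a two-point inequality with approximate eigenvectors taken at the later time. The invertibility you flag as the main obstacle is automatic, because $\varphi(X(t))$ must be defined, so $X(t)\in L\B(H)[a,b]\cap R\B(H)[a,b]$ with $a>0$ (this is what the paper uses), and your detour through $X(t)=V(t)^{-1}TV(t)$ is unnecessary and in infinite dimensions would need the extra argument from the proof of Theorem \ref{exi&uni2}, since $V(t)W(t)=I$ alone does not make $V(t)$ invertible.
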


\begin{proof} Note that we implicitly assume that $X(t)\in L\B(H)[a,b]\cap R\B(H)[a,b]$ 
for all $t\in [0,\tau]$.  
Let $0\leq s< t\leq\tau$, and let $x\in H$ be a unit vector. 
Then since 
\begin{align*}
	\|X(s)x\|^2 &= \|X(t)x-(X(t)x-X(s)x)\|^2\\
	&= \|X(t)x\|^2-2\Re \inpr{(X(t)-X(s))x}{X(t)x}+\|(X(t)-X(s))x\|^2, 
\end{align*}
we get 
\begin{align*}
\|X(t)x\|^2	 &\leq \|X(s)\|^2+2\Re\int_s^t\inpr{[\varphi(X(r)),X(r)]x}{X(t)x}dr \\
	 &\leq \|X(s)\|^2+2(t-s)\Re \inpr{[\varphi(X(t)),X(t)]x}{X(t)x}\\
	 &+2\|X(t)\| \int_s^t\|[\varphi(X(r)),X(r)]-[\varphi(X(t)),X(t)]\|dr.
\end{align*}
Since $\|X(t)\|\in \sigma(|X(t)|)$, there exists a sequence of unit vectors $x_n\in H$ such that  
$\{|X(t)|x_n-\|X(t)\|x_n\}_n$ converges to 0. 
Then $\{\|X(t)x_n\|\}_n$ converges to $\|X(t)\|$. 
Let $X(t)=V(t)|X(t)|$ be the polar decomposition. 
Since 
\begin{align*}
\lefteqn{\inpr{[\varphi(X(t)),X(t)]x_n}{X(t)x_n}}	 \\
&=\inpr{\varphi_1(|X(t)|)V(t)|X(t)|x_n}{V(t)|X(t)|x_n}-\inpr{|X(t)|^2\varphi_1(|X(t)|)x_n}{x_n}\\
&+\inpr{|X(t)|^2\varphi_2(|X(t)^*|)x_n}{x_n}-\inpr{|X(t)|\varphi_2(|X(t)|)|X(t)|x_n}{x_n}, 
\end{align*}
we get 
\begin{align*}
	\limsup_{n\to\infty}\Re\lefteqn{\inpr{[\varphi(X(t)),X(t)]x_n}{X(t)x_n}}\\
	&\leq\|X(t)\|^2\limsup_{n\to\infty}(\inpr{\varphi_1(|X(t)|)V(t)x_n}{V(t)x_n}-\varphi_1(\|X(t)\|))\\
	&+\|X(t)\|^2\limsup_{n\to\infty}(\inpr{\varphi_2(|X(t)^*|)x_n}{x_n}-\varphi_2(\|X(t)\|))\\
	&\leq 0.
\end{align*}
Thus 
\[\|X(t)\|^2\leq \|X(s)\|^2+2\|X(t)\| \int_s^t\|[\varphi(X(r)),X(r)]-[\varphi(X(t)),X(t)]\|dr\]
and 
\[\frac{\|X(t)\|^2-\|X(s)\|^2}{t-s}\leq \frac{2\|X(t)\|}{t-s}\int_s^t\|[\varphi(X(r)),X(r)]-[\varphi(X(t)),X(t)]\|dr.\]
This shows 
\[\limsup_{s\uparrow t}\frac{\|X(t)\|^2-\|X(s)\|^2}{t-s}\leq 0,\]
\[\limsup_{t\downarrow s}\frac{\|X(t)\|^2-\|X(s)\|^2}{t-s}\leq 0,\]
and \cite[Chapter 4, Theorem 1.2]{Br1994} implies that $\|X(t)\|^2$ is a decreasing function on $[0,\tau]$. 

Now assume $a>0$. 
Then $X(t)$ is invertible for all $t\in [0,\tau]$. 
For $s,t$ and $x$ as above, we have 
 \begin{align*}
 	\|X(t)x\|^2	 &= \|X(s)x\|^2+2\Re\int_s^t\inpr{[\varphi(X(r)),X(r)]x}{X(t)x}dr-\|(X(t)-X(s))x\|^2 \\
 	&\geq m(X(s))^2+2(t-s)\Re \inpr{[\varphi(X(t)),X(t)]x}{X(t)x}\\
 	&-2\|X(t)\| \int_s^t\|[\varphi(X(r)),X(r)]-[\varphi(X(t)),X(t)]\|dr-\|X(t)-X(s)\|^2.
 \end{align*}
We take a sequence $\{y_n\}_n$ of unit vectors in $H$ such that $\{|X(t)|y_n-m(X(t))y_n\}_n$ converges to 0. 
Then since $m(X(t))=\min\sigma(|X(t)|)=\min\sigma(|X(t)^*|)$, we get 
\begin{align*}
	\liminf_{n\to\infty}\Re\lefteqn{\inpr{[\varphi(X(t)),X(t)]y_n}{X(t)y_n}}\\
	&\geq m(X(t))^2\liminf_{n\to\infty}(\inpr{\varphi_1(|X(t)|)V(t)y_n}{V(t)y_n}-\varphi_1(m(X(t))))\\
	&+m(X(t))^2\liminf_{n\to\infty}(\inpr{\varphi_2(|X(t)^*|)y_n}{y_n}-\varphi_2(m(X(t))))\\
	&\geq 0.
\end{align*}
and  
\begin{align*}
m(X(t))&\geq m(X(s))^2-2\|X(t)\| \int_s^t\|[\varphi(X(r)),X(r)]-[\varphi(X(t)),X(t)]\|dr\\
&-\|X(t)-X(s)\|^2.
\end{align*}
Thus 
\[\liminf_{s\uparrow t}\frac{m(X(t))^2-m(X(s))^2}{t-s}\geq 0,\]
\[\liminf_{t\downarrow s}\frac{m(X(t))^2-m(X(s))^2}{t-s}\geq 0,\]
and $m(X(t))^2$ is increasing on $[0,\tau]$. 
\end{proof}

\begin{remark} The above argument for $m(X(t))$ breaks down if $a=0$ and $\sigma(|X(t)|)\neq \sigma(|X(t)^*|)$.   
\end{remark}

Since $\dim H=\infty$, we have only $\sigma(|T|)\setminus \{0\}=\sigma(|T^*|)\setminus \{0\}$ in general. 
Note that $\sigma(|T|)\neq \sigma(|T^*|)$ occurs only if either $T$ is left invertible but not right invertible or $T$ is right invertible but not left invertible. 
As we are interested in the Aluthge flow of a left invertible $T$, 
we should not miss such cases. 
Since the second case can be reduced to the first case by taking adjoint, 
we treat only the first case. 

We can prove the following lemma in the same way as in the proof of the previous lemma. 

\begin{lemma}\label{decreasing3}
Let $0<a<b$ and let $\varphi_1\in C[a,b]$ be an increasing function. 
Let $T\in L\B(H)[a,b]$ and let $X(t)$ be a solution of 
\begin{equation}\label{LODE}
\frac{dX(t)}{dt}=[\varphi_1(|X(t)|),X(t)],\quad X(0)=T,
\end{equation} 
in $\B(H)$ defined on $[0,\tau]$, where the derivative is taken with respect to the operator norm. 
Then $\|X(t)\|$ is decreasing and $m(X(t))$ is increasing with respect to $t$ on $[0,\tau]$. 
\end{lemma}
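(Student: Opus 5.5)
We will follow the proof of Lemma \ref{decreasing2}, keeping track of the fact that $\varphi_2=0$, so that only $|X(t)|$ (and not $|X(t)^*|$) enters. As there, we implicitly assume $X(t)\in L\B(H)[a,b]$ for all $t\in[0,\tau]$, so that $\varphi_1(|X(t)|)$ is defined. Write $G(r)=[\varphi_1(|X(r)|),X(r)]$; this is norm continuous in $r$.

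\emph{Norm.} For $0\le s<t\le\tau$ and a unit vector $x$, expanding $\|X(s)x\|^2=\|X(t)x-(X(t)-X(s))x\|^2$ gives, exactly as before,
\[\|X(t)x\|^2\le \|X(s)\|^2+2(t-s)\Re\inpr{G(t)x}{X(t)x}+2\|X(t)\|\int_s^t\|G(r)-G(t)\|dr.\]
Let $X(t)=V(t)|X(t)|$ be the polar decomposition, so that $V(t)$ is an isometry, since $T$, and hence $X(t)$, is left invertible ($a>0$). Then
\[\inpr{G(t)x}{X(t)x}=\inpr{\varphi_1(|X(t)|)V(t)|X(t)|x}{V(t)|X(t)|x}-\inpr{|X(t)|^2\varphi_1(|X(t)|)x}{x}.\]
Choose unit vectors $x_n$ with $|X(t)|x_n-\|X(t)\|x_n\to0$. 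The second term then tends to $\|X(t)\|^2\varphi_1(\|X(t)\|)$. The first term is asymptotically $\|X(t)\|^2\inpr{\varphi_1(|X(t)|)V(t)x_n}{V(t)x_n}$, which is at most $\|X(t)\|^2\varphi_1(\|X(t)\|)$, because $V(t)x_n$ is a unit vector, $\varphi_1$ is increasing, and $\sigma(|X(t)|)\subset[a,\|X(t)\|]$. Hence the $\limsup$ of the real part is $\le0$. The one-sided Dini derivative bounds then follow verbatim, and \cite[Chapter 4, Theorem 1.2]{Br1994} shows that $\|X(t)\|^2$ is decreasing.

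\emph{Lower bound $m(X(t))$.} This is the step where Lemma \ref{decreasing2} used $\sigma(|X|)=\sigma(|X^*|)$, which may now fail. Use instead
\[\|X(t)x\|^2\ge m(X(s))^2+2(t-s)\Re\inpr{G(t)x}{X(t)x}-2\|X(t)\|\int_s^t\|G(r)-G(t)\|dr-\|X(t)-X(s)\|^2.\]
Pick unit vectors $y_n$ with $|X(t)|y_n-m(X(t))y_n\to0$. Then $\|X(t)y_n\|\to m(X(t))$, and by the formula above
\[\Re\inpr{G(t)y_n}{X(t)y_n}=m(X(t))^2\bigl(\inpr{\varphi_1(|X(t)|)V(t)y_n}{V(t)y_n}-\varphi_1(m(X(t)))\bigr)+o(1).\]
Since $V(t)y_n$ is a unit vector and $\varphi_1(|X(t)|)\ge\varphi_1(m(X(t)))$ as operators (because $\sigma(|X(t)|)\subset[m(X(t)),b]$ and $\varphi_1$ is increasing), the $\liminf$ is $\ge0$. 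The key point is that only $\varphi_1(|X|)$ appears: the lower spectral bound of $|X|$ controls it, and $|X^*|$ is never needed. The error terms are $o(t-s)$ because $\|X(t)-X(s)\|=O(t-s)$, so the lower Dini derivatives of $m(X(t))^2$ are $\ge0$ from both sides. The same Dini-derivative theorem, applied to $-m(X(t))^2$, gives that $m(X(t))$ is increasing. The main obstacle is exactly this last estimate, which is resolved by the absence of the $\varphi_2$ term.
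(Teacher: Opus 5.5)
Your proposal is correct and matches the paper's proof, which consists of repeating the Dini-derivative argument of Lemma \ref{decreasing2} with $\varphi_2=0$. You correctly identify the two points that make this work: only $\sigma(|X(t)|)$ enters the lower estimate for $m(X(t))$, so the identity $\sigma(|X|)=\sigma(|X^*|)$ is no longer needed, and left invertibility makes $V(t)$ an isometry, so $\|V(t)y_n\|=1$ in the comparison with $\varphi_1(m(X(t)))$.
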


We denote the set of compact operators by $\K(H)$, and the set of finite rank operators by $\F(H)$.  
Following \cite[Eq.(2.7)]{GK1969}, we define the singular values of $T\in \B(H)$ by 
\[s_n(T)=\inf\{\|T-S\|;\; S\in \F(H),\; \operatorname{rank} S\leq n-1\}.\]

For the theory of normed ideals, we refer the reader to \cite{GK1969} and \cite{S2005}. 
A two-sided ideal $\cJ$ of $\B(H)$ is said to be a normed ideal if $\cJ$ is equipped with a norm 
$\|\cdot\|_\cJ$ such that $(\cJ, \|\cdot \|_\cJ)$ is a Banach space and the following hold: 
\begin{itemize}
	\item [(1)] $\|X\|_\cJ=\|X\|$ for every rank one operator $X\in \F(H)$.
	\item [(2)] $\|YXZ\|_\cJ\leq \|Y\| \|X\|_\cJ \|Z\|$ for all $X\in \cJ$ and $Y,Z\in \B(H)$. 
\end{itemize}
It is known that the second condition is equivalent to the following unitary invariance property: 
\begin{itemize}
	\item [(2)'] $\|UXV\|_\cJ= \|X\|_\cJ$ for all $X\in \cJ$ and unitaries $U,V\in \B(H)$. 
\end{itemize}
Although it is customary to exclude the case $\cJ=\B(H)$, $\|\cdot\|_\cJ=\|\cdot\|$, we include it in the following discussion. 
For a given normed ideal $\cJ$, we denote by $\cJ^{(0)}$ the closure of $\F(H)$ with respect to the norm $\|\cdot\|_\cJ$. 
For example, if $\cJ=\B(H)$, we have $\cJ^{(0)}=\K(H)$.

We denote by $B(H)^{-1}$ the group of bounded invertible operators. 
For a normed ideal $\cJ$, we set
\[G_\cJ=\{X\in \B(H)^{-1};\; X-I\in \cJ\},\]
which is a Banach Lie group (see \cite{de la Harpe1972}). 
For $T\in \B(H)$, its similarity orbit with respect to $G_\cJ$ is defined by 
\[\cO_\cJ(T)=\{gTg^{-1};\; g\in G_\cJ\}.\]
When $\cJ=\B(H)$, we denote it simply by $\cO(T)$.   

For $1\leq p<\infty$, we let $\|T \|_p=(\sum_{n=1}s_n(T)^p)^{1/p}$, and we denote by $\cJ_p$ the Schatten ideal 
\[\cJ_p(H)=\{T\in \K(H);\; \|T\|_p<\infty\}.\]

Let $\cJ$ be a normed ideal. 
We would like to discuss the solution of Eq.(\ref{ODE}) satisfying $X(t)-T\in \cJ$ under 
the condition $[T,\varphi(T)]\in \cJ$. 
For this purpose, we need to set up a relevant Lipschitz condition.  
 
\begin{definition} Let $\cJ$ be a normed ideal, let $\cX$ be a subset of $\B(H)$, and let $\psi :\cX\to \B(H)$ be a map. 
We say that $\psi$ is \textit{$\cJ$-Lipschitz} if there exists $C>0$ such that whenever $X_1,X_2\in \cX$ and $X_1-X_2\in \cJ$, 
we have $\psi(X_1)-\psi(X_2)\in \cJ$ and $\|\psi(X_1)-\psi(X_2)\|_\cJ\leq C\|X_1-X_2\|_\cJ$. 

We say that $f\in C[a,b]$ is a \textit{$\cJ$-Lipschitz function} on $[a,b]$ if the map $A\mapsto f(A)$ is $\cJ$-Lipschtz on $\B(H)_{\mathrm{sa}}[a,b]$. 
In moreover $\cJ=\B(H)$, we say that $f$ is an \textit{operator Lipschitz function} on $[a,b]$. 
\end{definition}

\begin{remark} A few remarks are in order. 
\begin{itemize}
\item [(1)] Let $1<p<\infty$. 
The Potapov--Sukochev theorem \cite[Theorem 1]{PS2011} says that $f\in C[a,b]$ is $\cJ_p$-Lipschitz on $[a,b]$ if and only if $f\in \Lip[a,b]$. 
\item [(2)] Kato \cite{K1973} showed that the absolute value function $|x|$ is not operator Lipschitz on $[-b,b]$  
(see  \cite{K1992} for related topics). 
\item [(3)] 
Let $f\in C^1[a,b]$. 
If $f'$ extends to a function that is the Fourier transform of a complex measure, 
then $f$ is $\cJ$-Lipschitz on $[a,b]$ for all $\cJ$ (see \cite[Theorem 1.1.1]{AP2016}). 
For example, if $f'$ extends to a periodic function with absolutely convergent Fourier series, then 
$f$ satisfies this criterion. 
For example, if $\alpha>1$, the function $|x|^\alpha$ is $\cJ$-Lipschitz on $[-b,b]$ for all $\cJ$.   
\end{itemize}
\end{remark}

If $0<a$ and $f\in C[a,b]$ is $\cJ$-Lipschitz on $[a,b]$, the map $T\mapsto f(|T|)$ on 
$L\B(H)[a,b]$ is $\cJ$-Lipschitz. 
This follows from the fact that $f(|T|)=f((T^*T)^{1/2})$ and $x^{1/2}$ is smooth on $[a^2,b^2]$. 
This is no longer the case if $a=0$ (think of $|x|$). 
For $f\in C[0,b]$, let $\tilde{f}$ be the even extension $\tilde{f}(x)=f(|x|)$ of $f$ to the interval $[-b,b]$.
For $T\in \B(H)$ with $\|T\|\leq b$, we have 
\[\sigma\left(\begin{pmatrix}
	0 & T^* \\
	T & 0
\end{pmatrix}\right)\subset [-b,b],\]
and 
\[\tilde{f}\left(\begin{pmatrix}
	0 & T^* \\
	T & 0
\end{pmatrix}\right)=
\begin{pmatrix}
f(|T|) & 0 \\
0 & f(|T^*|)
\end{pmatrix}. 
\]
Thus the $\cJ$-Lipschitz property of $\tilde{f}$ on $[-b,b]$ ensures that 
the maps $T\mapsto f(|T|)$ and $T\mapsto f(|T^*|)$ are $\cJ$-Lipschitz on the ball $\{T\in \B(H);\; \|T\|\leq b\}$. 
For example, if $\alpha>1$, the map $T\mapsto |T|^\alpha$ on a bounded subset of $\B(H)$ is $\cJ$-Lipschitz for every $\cJ$.

\begin{theorem}\label{exi&uni2}
Let $\varphi=(\varphi_1,\varphi_2)$ be a pair of functions satisfying the condition (C0), and let $T\in \B(H)$. 
We assume 
\begin{itemize} 
\item[$(1)$] if $0<a$, $\varphi_1$ and $\varphi_2$ are $\cJ$-Lipschitz functions on $[a,b]$, and 
$\sigma(|T|), \sigma(|T^*|)\subset (a,b)$.  
\item[$(2)$] if $a=0$, $\tilde{\varphi_1}$ and $\tilde{\varphi_2}$ are $\cJ$-Lipschitz functions on $[-b,b]$, 
and $\sigma(|T|), \sigma(|T^*|)\subset [0,b)$. 
\end{itemize}
Let $\cJ$ be an operator ideal, and assume $[\varphi(T),T]\in \cJ$. 
Then there exists a unique global solution $X(t)$ of Eq.(\ref{ODE}) on $[0,\infty)$ such that 
$X(t)-T$ belongs to  $C^1([0,\infty),\cJ)$, where $\cJ$ is equipped with the norm $\|\cdot\|_\cJ$.  
The solution stays in the similarity orbit $\cO(T)$ of $T$. 
If moreover $\varphi(T)\in \cJ$, it stays in $\cO_\cJ(T)$. 
\end{theorem}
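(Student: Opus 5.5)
We write $X(t)=T+Z(t)$ and solve for $Z$ in the Banach space $(\cJ,\|\cdot\|_\cJ)$. The equation becomes
\[\frac{dZ(t)}{dt}=G(Z(t)):=[\varphi(T+Z(t)),T+Z(t)],\quad Z(0)=0.\]
The point is to show that $G$ maps a suitable neighbourhood into $\cJ$ and is $\cJ$-Lipschitz there. We can then mimic the proof of Theorem \ref{exi&uni}, with Lemma \ref{decreasing2} replacing Lemma \ref{decreasing} and Corollary \ref{increasing}.

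\emph{Step 1 (Lipschitz property).} In case (1) we fix $\varepsilon>0$ with $\sigma(|T|),\sigma(|T^*|)\subset[a+\varepsilon,b-\varepsilon]$; this uses the openness assumption. If $\|Z\|<\varepsilon$, the perturbation bound $|s|$-type estimate (via $\min\sigma(|X|)=m(X)$ and the norm) keeps $T+Z$ in $L\B(H)[a,b]\cap R\B(H)[a,b]$. The remark preceding the theorem shows that $X\mapsto\varphi_1(|X|)$ and $X\mapsto\varphi_2(|X^*|)$ are $\cJ$-Lipschitz there. In case (2) we use the $2\times2$ block trick with $\tilde\varphi_i$ on the ball $\|X\|\le b$. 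Writing
\[G(Z_1)-G(Z_2)=[\varphi(T+Z_1)-\varphi(T+Z_2),T+Z_1]+[\varphi(T+Z_2),Z_1-Z_2],\]
the ideal property gives $\|G(Z_1)-G(Z_2)\|_\cJ\le L\|Z_1-Z_2\|_\cJ$, since $\|\cdot\|\le\|\cdot\|_\cJ$ and everything is bounded. Applying this with $Z_2=0$ and using $G(0)=[\varphi(T),T]\in\cJ$ gives $G(Z)\in\cJ$ and the bound $\|G(Z)\|_\cJ\le L\|Z\|_\cJ+\|G(0)\|_\cJ$.

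\emph{Step 2 (local solution and continuation).} The Picard--Lindel\"of theorem in the Banach space $\cJ$ yields a unique $C^1$ solution on $[0,\tau]$ with $\tau=\varepsilon/(L\varepsilon+M)$. Since $\|\cdot\|\le\|\cdot\|_\cJ$, $X$ is also norm-differentiable. Lemma \ref{decreasing2} then keeps $\|X(t)\|$ decreasing and $m(X(t))$ increasing; in case (2) only the norm is needed. Hence $X(t)$ stays in the same region, and the Lipschitz constant $L$, the supremum $M$ and $\tau$ can be taken uniform, provided we restart from $X(\tau)$ with base point $X(\tau)$ in place of $T$.

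The main obstacle is that $\sigma(|X(t)^*|)$ must also be controlled. Lemma \ref{decreasing2} gives $m(X(t))$ increasing, but this is the lower bound of $\sigma(|X(t)|)$ only. The argument there used $m(X)=\min\sigma(|X^*|)$, which holds because $T$ is invertible when $a>0$. Invertibility is preserved: $X(t)$ is similar to $T$ by Step 3 below, or one can argue by continuity of the spectrum. Then $\sigma(|X^*|)$ and $\sigma(|X|)$ share their minimum and maximum. One must also check that $[\varphi(X(\tau)),X(\tau)]\in\cJ$ at restart. This follows because $G$ maps into $\cJ$.

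\emph{Step 3 (orbit).} As in the second proof of Proposition \ref{orbit}, we solve the linear equations $V'=-V\varphi(X(t))$ and $W'=\varphi(X(t))W$ with $V(0)=W(0)=I$ in $\B(H)$. These have global solutions, and $VW=I$ and $VXW=T$, so $X(t)=V(t)^{-1}TV(t)\in\cO(T)$. It remains to treat the case $\varphi(T)\in\cJ$. Then $\varphi(X(t))=\varphi(T)+(\varphi(X(t))-\varphi(T))\in\cJ$ by the $\cJ$-Lipschitz property. So the equation for $V-I$ can be solved in $\cJ$, giving $V(t)\in G_\cJ$, and hence $X(t)\in\cO_\cJ(T)$.
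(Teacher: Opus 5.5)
Your Steps 1 and 2 follow the paper's route: you recast the equation as a $\cJ$-valued Lipschitz ODE for $Z=X-T$ and continue using Lemma \ref{decreasing2}. Step 3, however, skips the one point where the infinite-dimensional case differs from Proposition \ref{orbit}. From $V(t)W(t)=I$ you conclude $X(t)=V(t)^{-1}TV(t)$. In $\B(H)$ a right inverse does not make $V(t)$ invertible; take $V=S^*$, $W=S$ with $S$ the unilateral shift. So you still need $W(t)V(t)=I$.

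The paper proves this by a connectedness argument. The set of $t$ with $V(t)$ invertible is open and contains $0$. On its initial component $[0,t_1)$ one has $W(t)V(t)=I$, hence $W(t_1)V(t_1)=I$ by norm continuity, so $V(t_1)$ is invertible, a contradiction. An even shorter repair is to note that $\frac{d}{dt}(W(t)V(t))=[\varphi(X(t)),W(t)V(t)]$ with $W(0)V(0)=I$; uniqueness for this linear equation forces $W(t)V(t)=I$ for all $t$.

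The same omission affects your final sentence, since membership in $G_\cJ$ requires $V(t)$ to be invertible, not just $V(t)-I\in\cJ$. Your appeal to Step 3 for the invertibility of $X(t)$ in Step 2 is unnecessary anyway. Once $X(\tau)\in L\B(H)[a,b]\cap R\B(H)[a,b]$ with $a>0$, it is invertible, so $m(X(\tau)^*)=m(X(\tau))\ge m(T)=m(T^*)$.

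A smaller point in Step 2, which the paper also leaves implicit: unlike the matrix case, the supremum $M$ cannot simply be taken uniform. There is no compactness, $\|[\varphi(X),X]\|_\cJ$ need not be bounded over the region, and $\|[\varphi(X(\tau)),X(\tau)]\|_\cJ$ may exceed $\|[\varphi(T),T]\|_\cJ$. Your own bound $\|G(Z)\|_\cJ\le L\|Z\|_\cJ+\|G(0)\|_\cJ$, with $L$ uniform over the region, settles this. Combined with Gr\"onwall it gives $\|Z(t)\|_\cJ\le \|G(0)\|_\cJ(e^{Lt}-1)/L$. Hence the local existence time at each restart is bounded below on every finite interval, and the continuation to $[0,\infty)$ goes through.
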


\begin{proof} Let $\cZ=\{Z\in \cJ;\; T+Z\in L\B(H)[a,b]\cap R\B(H)[a,b] \}$, which is a neighborhood 
of $0$ in the Banach space $\cJ$. 
We define $\psi:\cZ\to \B(H)$ by $\psi(Z)=[\varphi(T+Z),T+Z]$. 
Then
\[\psi(Z)=[\varphi(T+Z)-\varphi(T),T+Z]+[\varphi(T),Z]+[\varphi(T),T]\in \cJ,\]
and $\psi$ is $\cJ$-Lipschitz. 
Now  Eq.(\ref{ODE}) is equivalent to the $\cJ$-valued ODE
\[\frac{dZ(t)}{dt}=\psi(Z(t)),\quad Z(0)=0,\] 
satisfying the Lipschitz condition, which has a unique local solution. 
Now we can show the existence of a unique global solution as in the proof of Theorem \ref{exi&uni} by using 
Lemma \ref{decreasing2} instead of Lemma \ref{decreasing}.

We can show $X(t)\in \cO(T)$, as in the proof of Proposition \ref{orbit}. 
Note that $t\mapsto X(t)$ is norm continuous. 
Solving the linear ODEs, 
\[\frac{dV(t)}{dt}=-V(t)\varphi(X(t)),\quad V(0)=I,\]
\[\frac{dW(t)}{dt}=\varphi(X(t))W(t),\quad W(0)=I,\]
we can get norm continuous families $\{V(t)\}_{t\geq 0}$ and $\{W(t)\}_{t\geq 0}$ 
in $\B(H)$ satisfying $V(0)=W(0)=I$, $V(t)W(t)=I$, and $V(t)X(t)W(t)=T$. 
To show $X(t)\in \cO(T)$, it suffices that $V(t)$ is (left) invertible for all $t\geq 0$. 
Let $\cI=\{t\in [0,\infty);\; \text{$V(t)$ is invertible}\}$. 
Then $\cI$ is an open subset of $[0,\infty)$ containing $0$. 
Suppose $\cI\neq [0,\infty)$. 
Then the connected component of $0$ in $\cI$ is of the form $[0,t_1)$ with $t_1\notin \cI$. 
Since for every $t\in [0,t_1)$, we have $W(t)V(t)=I$, we get $W(t_1)V(t_1)=I$ by taking the norm limit. 
This shows that $V(t_1)$ is invertible, which is a contradiction. 
Therefore $V(t)$ is invertible for all $t\geq 0$. 

Assume $\varphi(T)\in \cJ$ now. 
Since $X(t)-T\in \cJ$ and $\varphi$ is $\cJ$-Lipschitz, we get $\varphi(X(t))\in \cJ$. 
Thus $\varphi(X(t))$ is a $\cJ$-valued continuous function with respect to the norm $\|\cdot \|_\cJ$. 
Let $\tilde{V}(t)=V(t)-I$. 
Then $\tilde{V}(t)$ is a solution of the $\cJ$-valued linear ODE
\[\frac{d\tilde{V}(t)}{dt}=-\tilde{V}(t)\varphi(X(t))-\varphi(X(t)),\quad \tilde{V}(0)=0,\]
and so $\tilde{V}(t)\in \cJ$. 
Thus $V(t)\in G_\cJ$, and $X(t)\in \cO_\cJ(T)$. 
\end{proof}

In a similar way, we can show the following.

\begin{theorem}\label{exi&uni3}
Let $0<a<b$, let $\varphi_1\in C[a,b]$ be a strictly increasing function, and  
assume that $T\in \B(H)$ satisfies $\sigma(|T|)\subset (a,b)$. 
Let $\cJ$ be an operator ideal such that $[\varphi_1(|T|),T]\in \cJ$ and $\varphi_1$ is $\cJ$-Lipschitz on $[a,b]$. 
Then there exists a unique global solution $X(t)$ of Eq.(\ref{LODE}) on $[0,\infty)$ such that 
$X(t)-T$ belongs to  $C^1([0,\infty),\cJ)$, where $\cJ$ is equipped with the norm $\|\cdot\|_\cJ$.   	
The solution stays in the similarity orbit $\cO(T)$ of $T$. 
If moreover $[\varphi_1(|T|),T]\in \cJ$, it stays in $\cO_\cJ(T)$. 
\end{theorem}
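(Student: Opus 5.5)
Our approach mirrors the proof of Theorem \ref{exi&uni2}, with the map $X\mapsto[\varphi_1(|X|),X]$ replacing $[\varphi(X),X]$ and Lemma \ref{decreasing3} replacing Lemma \ref{decreasing2}. Choose $a<a'<b'<b$ with $\sigma(|T|)\subset(a',b')$ and put
\[\cZ=\{Z\in\cJ;\; \sigma(|T+Z|)\subset[a,b]\}.\]
This set contains the $\|\cdot\|_\cJ$-ball of radius $\min(a'-a,b-b')$ about $0$, because $\|Z\|\le\|Z\|_\cJ$ and singular values move by at most $\|Z\|$. Define $\psi(Z)=[\varphi_1(|T+Z|),T+Z]$ on $\cZ$ and write
\[\psi(Z)=[\varphi_1(|T+Z|)-\varphi_1(|T|),T+Z]+[\varphi_1(|T|),Z]+[\varphi_1(|T|),T].\]
Each term lies in $\cJ$, since $[\varphi_1(|T|),T]\in\cJ$ by hypothesis and $\cJ$ is an ideal. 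The map $\psi$ is $\cJ$-Lipschitz because $T\mapsto\varphi_1(|T|)$ is $\cJ$-Lipschitz on $L\B(H)[a,b]$; as remarked before Theorem \ref{exi&uni2}, this holds since $a>0$, $f(|T|)=f((T^*T)^{1/2})$, and $\sqrt{x}$ is smooth on $[a^2,b^2]$. The standard Banach-space Picard--Lindel\"of theorem then gives a unique local solution $Z(t)\in C^1([0,\tau],\cJ)$ of $Z'=\psi(Z)$, $Z(0)=0$. Its existence time $\tau$ depends only on the Lipschitz constant, on $\sup\|\psi\|_\cJ$ over a fixed ball, and on the distance to the boundary of $\cZ$.

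To extend globally we use Lemma \ref{decreasing3}: along the solution, $\|X(t)\|$ is decreasing and $m(X(t))$ is increasing, so $\sigma(|X(t)|)\subset[m(T),\|T\|]\subset(a',b')$ for all $t$. The \emph{main obstacle} is that the step size in the restart argument of Theorem \ref{exi&uni} needs a uniform bound on $\|\psi(Z)\|_\cJ$ near the current point. Here $\psi(Z(t))-\psi(0)$ is controlled by $C\|Z(t)\|_\cJ$, and $\|Z(t)\|_\cJ$ can grow. We therefore argue on each bounded interval $[0,S]$. The Lipschitz estimate gives $\|Z'(t)\|_\cJ\le\|\psi(0)\|_\cJ+C\|Z(t)\|_\cJ$, so Gronwall's inequality bounds $\|Z\|_\cJ$ on $[0,S]$. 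Hence on $[0,S]$ the solution stays in a bounded subset of the region where the spectrum lies in $(a',b')$, a uniform existence time is available there, and the solution extends past any finite time. Uniqueness follows from the Lipschitz property. Note that $C$ is a global constant on $\cZ$, since the spectral condition is preserved.

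For the orbit statement we repeat the argument of Theorem \ref{exi&uni2} verbatim with $\varphi_1(|X(t)|)$ in place of $\varphi(X(t))$. Solve $V'=-V\varphi_1(|X|)$ and $W'=\varphi_1(|X|)W$ with $V(0)=W(0)=I$. Then $VW=I$ and $VXW=T$. The connectedness argument shows that $V(t)$ is invertible for all $t$, so $X(t)=V(t)^{-1}TV(t)\in\cO(T)$. For the final assertion, the intended hypothesis should be $\varphi_1(|T|)\in\cJ$; the statement as written repeats $[\varphi_1(|T|),T]\in\cJ$, which looks like a typo. Under that hypothesis the $\cJ$-Lipschitz property gives $\varphi_1(|X(t)|)\in\cJ$ continuously in $\|\cdot\|_\cJ$. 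Then $\tilde V=V-I$ solves a $\cJ$-valued linear ODE, so $V(t)\in G_\cJ$ and $X(t)\in\cO_\cJ(T)$.
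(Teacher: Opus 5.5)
Your proposal is correct and follows the paper's route. The paper proves this theorem only by saying it goes ``in a similar way'' as Theorem \ref{exi&uni2}: a Picard--Lindel\"of argument for $Z=X-T$ in the Banach space $\cJ$, made global via Lemma \ref{decreasing3}, followed by the $V,W$ construction for the two orbit statements. Your Gronwall bound on $\|Z(t)\|_\cJ$ usefully fills a point the paper passes over, since $\|\psi\|_\cJ$ need not be bounded on $\cZ$ while the restart argument borrowed from Theorem \ref{exi&uni} tacitly assumes such a bound, and your reading of the final hypothesis as $\varphi_1(|T|)\in\cJ$ is the intended one.
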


Since it would be rather awkward to treat the two cases in Theorem \ref{exi&uni2} and \ref{exi&uni3} differently in notation, 
we treat the latter case as a special case of the former with $\varphi_2=0$ abusing the notation slightly.   
We define an operator flow $F^\varphi_t(T)$ by the unique solution of Eq.(\ref{ODE}) (or Eq.(\ref{LODE})) as in the matrix case. 
Strictly speaking, the flow may depend on the operator ideal $\cJ$, but we do not bother to put it in the notation. 
We define the Haagerup flow $\{F^H_t(T)\}_{t\geq 0}$ and the Aluthge flow $\{F^A_t(T)\}_{t\geq 0}$ 
as in the matrix case. 
Note that the latter is defined for every left invertible operator $T$. 

\begin{lemma}\label{necessary} Let $\varphi$, $T$, and $\cJ$ be either as in Theorem \ref{exi&uni2} or as in Theorem \ref{exi&uni3}. 
For the flow $\{F^\varphi_t(T)\}_t$ to converge with respect to the norm $\|\cdot \|_\cJ$, it is necessary that 
there exists $Z\in \cJ$ satisfying $[\varphi(T+Z),T+Z]=0$, that is, $T+Z$ is the fixed point of $F^\varphi_t$. 
\end{lemma}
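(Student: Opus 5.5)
Suppose $X(t)=F^\varphi_t(T)$ converges in $\|\cdot\|_\cJ$, i.e.\ $Z(t):=X(t)-T\in\cJ$ has a $\|\cdot\|_\cJ$-limit $Z\in\cJ$; the limit lies in $\cJ$ because $(\cJ,\|\cdot\|_\cJ)$ is a Banach space. The plan is the standard one: the limit of a convergent trajectory of an autonomous ODE whose vector field is continuous at that point must be an equilibrium. So I will show $[\varphi(T+Z),T+Z]=0$.

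First I place the limit in the domain of the vector field. Lemma \ref{decreasing2} (or Lemma \ref{decreasing3} in the setting of Theorem \ref{exi&uni3}) gives $\|X(t)\|\leq\|T\|<b$ for all $t$. When $a>0$ it also gives $m(X(t))\geq m(T)>a$. In the two-sided case of Theorem \ref{exi&uni2}, $T$ is invertible, so $\min\sigma(|X(t)^*|)=m(X(t))$ as well. Since $\|\cdot\|\leq\|\cdot\|_\cJ$, $X(t)\to T+Z$ in operator norm, and these bounds pass to the limit: $\|T+Z\|\le\|T\|$ and $m(T+Z)\ge m(T)$. Hence $T+Z$ lies in $L\B(H)[a,b]\cap R\B(H)[a,b]$ (or in $L\B(H)[a,b]$), i.e.\ $Z$ belongs to the set $\cZ$ from the proof of Theorem \ref{exi&uni2}. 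There the map $\psi(Z')=[\varphi(T+Z'),T+Z']$ is $\cJ$-valued and $\cJ$-Lipschitz, hence $\|\cdot\|_\cJ$-continuous.

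Second, I conclude from the integral equation. We have $Z(t)=\int_0^t\psi(Z(r))\,dr$ in $\cJ$, and $\psi(Z(r))\to\psi(Z)=:W$ in $\|\cdot\|_\cJ$. If $W\neq0$, pick $r_0$ with $\|\psi(Z(r))-W\|_\cJ<\|W\|_\cJ/2$ for $r\ge r_0$. Then for $t>s\ge r_0$,
\[\|Z(t)-Z(s)\|_\cJ\geq (t-s)\|W\|_\cJ-(t-s)\|W\|_\cJ/2=(t-s)\|W\|_\cJ/2.\]
The right-hand side tends to infinity as $t\to\infty$, contradicting convergence of $Z(t)$. Thus $W=[\varphi(T+Z),T+Z]=0$, which means $T+Z$ is a fixed point of $F^\varphi_t$ by uniqueness in Theorem \ref{exi&uni2} (or \ref{exi&uni3}).

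The main obstacle is the first step, namely making sure $T+Z$ stays in the region where $\psi$ is defined and Lipschitz. This matters most when $a>0$, where the functional calculus needs $\sigma(|T+Z|)\subset[a,b]$. It is handled by the monotonicity of $\|X(t)\|$ and $m(X(t))$ together with closedness of these conditions under norm limits. If the Lipschitz estimate is only available on a slightly enlarged interval, I will extend $\varphi_i$ as in the proof of Theorem \ref{exi&uni}.
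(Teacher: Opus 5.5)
Your proof is correct and follows the paper's argument: the limit lies in the domain, the $\cJ$-Lipschitz property gives $[\varphi(X(t)),X(t)]\to[\varphi(T+Z),T+Z]$ in $\|\cdot\|_\cJ$, and a nonzero limit contradicts convergence via the integral equation. The differences are cosmetic. The paper places the limit in the domain just by norm-closedness of $L\B(H)[a,b]\cap R\B(H)[a,b]$, and it reaches the contradiction by pairing with vectors $x,y$ rather than through your linear lower bound on $\|Z(t)-Z(s)\|_\cJ$.
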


\begin{proof}
Assume that $\{F^\varphi_t(T)\}_t$ converges to $X_\infty\in \B(H)$ as $t \to \infty$ in $\|\cdot\|_\cJ$, 
and hence also in the operator norm. 
Since the domain of $\varphi$ is norm-closed and $\varphi$ is $\cJ$-Lipschitz, the limit $X_\infty$ remains in the domain of 
$\varphi$ and $[\varphi(F^\varphi_t(T)),F^\varphi_t(T)]$ converges to $[\varphi(X_\infty),X_\infty]$ in $\|\cdot\|_\cJ$. 
Suppose $[\varphi(X_\infty),X_\infty]\neq 0$. 
Then there exist $x,y\in H$ satisfying $\inpr{[\varphi(X_\infty),X_\infty]x}{y}\neq 0$, and 
\[\inpr{F^\varphi_t(T)x}{y}=\inpr{Tx}{y}+\int_0^t \inpr{[\varphi(F^\varphi_r(T)),F^\varphi_r(T)]x}{y}dr,\]
holds. 
This implies that
\[\lim_{t\to\infty}\int_0^t \inpr{[\varphi(F^\varphi_r(T)),F^\varphi_r(T)]x}{y}dr\]
converges, which is a contradiction as $\inpr{[\varphi(F^\varphi_r(T)),F^\varphi_r(T)]x}{y}$ converges to a 
non-zero number as $r\to\infty$. 
Thus $[\varphi(X_\infty),X_\infty]=0$ holds. 
\end{proof}

This criterion immediately shows that there exists a non-convergent flow.   

\begin{example}\label{Haagerupshift}
The Haagerup flow $\{F^H_t(S)\}_{t\geq 0}$ starting from the unilateral shift $S$ never converges 
in the operator norm (and hence in any unitarily invariant norm). 
Indeed, since $[S^*,S]$ is a rank one projection, we can take $\K$ equipped with the operator norm as 
the operator ideal $\cJ$. 
Assume that there exists $Z\in \K$ satisfying 
\[ [[(S+Z)^*,S+Z],S+Z]=0.\] 
Then the Kleinecke--Shirokov theorem implies that $S+Z$ is normal (see \cite[Problem 232]{Hal1982}). 
However, this is a contradiction because the Fredholm index of $S+Z$ is -1. 
Therefore there is no such $Z\in \K$.  
\end{example}

Our favorite playground for the Aluthge flow is the class of weighted shifts,   
as was already observed for the Aluthge transform (see \cite{CJL2005}). 

\begin{example} Let $H=\ell^2$, and let $S$ be the unilateral shift on $H$. 
We regard $\ell^\infty$ as the diagonal subalgebra of $\B(H)$ by identifying $f\in \ell^\infty$ with the 
multiplication operator of $f$. 
Then every unilateral weighted shift is of the form $Sf$ for some $f\in \ell^\infty$. 
Moreover, up to unitary equivalence, we may and do assume $f\geq 0$.  
To define the Aluthge flow, we need to assume that there exists a constant $a>0$ satisfying $a\leq |T|$. 
Thus we assume $T=Sf$ with $a\leq f$ in what follows. 

We can explicitly describe the Aluthge flow $F^A_t(Sf)$ as follows. 
Let $B:\ell^\infty\to\ell^\infty$ be the backward shift. 
By the uniqueness of the solution to Eq.(\ref{ODE}), we can find the solution of the form $Sh(t)$, $h(t)\in \ell^\infty$, with 
\[\frac{dh}{dt}=(B-I_{\ell^\infty})(\log h)h,\quad h(0)=f,\]
or equivalently with 
\[\frac{d\log h}{dt}=(B-I_{\ell^\infty})(\log h),\quad h(0)=f.\]
Let $\{\cP_t\}_{t\geq 0}$ be
the semigroup given by $e^{t(B-I_{\ell^\infty})}$ acting on $\ell^\infty$. 
Then we can express the solution as  
\begin{equation}\label{Poisson}
F^A_t(Sf)=Se^{\cP_t(\log f)}.
\end{equation}

Assume that there exists a scalar $\lambda>0$ satisfying $f-\lambda\in c_0$. 
Then we can show that $\{F^A_t(Sf)\}_{t\geq 0}$ converges to $\lambda S$ in the operator norm. 
Indeed, let $g=\log (f/\lambda)$. 
Then we have $F^A_t(Sf)=\lambda Se^{\cP_t(g)}$.  
Note that $g\in c_0$, and $\{\|B^{n}g\|_\infty\}_n$ converges to 0. 
Since 
\[\cP_t(g)=\sum_{n=0}^\infty \frac{t^ne^{-t}}{n!}B^{n}g,\] 
which is the average of $\{B^ng\}$ with respect to the Poisson distribution with mean $t$, we get 
\[\|\cP_t(g)\|_\infty\leq \sum_{n=0}^\infty \frac{t^ne^{-t}}{n!}\|B^{n}g\|_\infty \to 0, \quad (t\to\infty).\]
Therefore 
\[\lim_{t\to \infty}\|1-e^{\cP_t(g)}\|_\infty=0.\]

Let $\cJ$ be an operator ideal satisfying $\cJ=\cJ^{(0)}$. 
In a similar way, we can show  the convergence of $\{F^A_t(Sf)\}_{t\geq 0}$ to $\lambda S$ in $\|\cdot\|_\cJ$ 
if $f-\lambda\in \cJ\cap \ell^\infty$.  
\end{example}

\begin{remark}
Let $0<\lambda<1$, and let $\Delta_\lambda$ be the $\lambda$-Aluthge transform. 
Then for positive invertible $f\in \ell^\infty$, we have $\Delta_\lambda^n(Sf)=Se^{\cP_{\lambda,n}(\log f)}$ with 
\[\cP_{\lambda,n}(g)=\sum_{k=0}^n \begin{pmatrix}
	n \\
	k
\end{pmatrix}
(1-\lambda)^{n-k}\lambda^k B^{k}g,\]
which is the average of $\{B^{k}g\}$ with respect to the binomial distribution 
(see \cite{CJL2005}, and also \cite{OY2025} for related topics.)   
In view of Proposition \ref{Aluthge}, we can see why the Poisson distribution appears in the case of the Aluthge flow   
via the law of small numbers. 
\end{remark}

\begin{proposition}[cf. {\cite[Corollary 3.3]{CJL2005}}] Let $g\in \ell^\infty$ be given by 
\[g_n=\begin{cases}
	2^{-2N}(n-2^{2N}),  &  2^{2N}\leq n < 2^{2N+1},\\
	2^{-2N-1}(2^{2N+2}-n),  &  2^{2N+1}\leq n< 2^{2N+2},
\end{cases} 
\]
where $N\in \{0\}\cup\N$, and let $T=Se^g$. 
Then 
\[[\log|T|,T] =S(Bg-g)e^g\in \cJ_p,\]
for all $p>1$, and $\{F^A_t(T)\}_{t\geq 0}$ does not converge in the weak operator topology as $t\to \infty$. 
\end{proposition}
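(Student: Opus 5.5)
The argument has three parts: compute the commutator directly, check its Schatten membership from the growth of $g$, and then use the explicit formula (\ref{Poisson}) to rule out weak convergence.

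\textbf{The commutator.} Since $T=Se^g$, we have $|T|=e^g$ as a diagonal operator, so $\log|T|=g$. Using the shift relation $fS=S(Bf)$ for diagonal $f\in\ell^\infty$, we get
\[[g,Se^g]=gSe^g-Sge^g=S(Bg)e^g-Sge^g=S(Bg-g)e^g.\]

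\textbf{Schatten membership.} Here $S$ is an isometry and $e^g$ is bounded and invertible. Hence $S(Bg-g)e^g\in\cJ_p$ if and only if the diagonal $Bg-g$ lies in $\ell^p$. The sequence $g$ is piecewise linear with slopes $\pm 2^{-2N}$ or $\pm2^{-2N-1}$ on dyadic blocks. On the block $[2^{2N},2^{2N+2})$, of length $3\cdot 2^{2N}$, we therefore have $|g_{n+1}-g_n|\le 2^{-2N}$. This gives
\[\sum_n|g_{n+1}-g_n|^p\le \sum_N 3\cdot 2^{2N}\,2^{-2Np}<\infty \quad\text{for } p>1.\]
We also need $g$ to oscillate: $g$ rises from $0$ at $n=2^{2N}$ to $1$ at $n=2^{2N+1}$, then falls back to $0$ at $n=2^{2N+2}$. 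So $0\le g\le1$, $T$ is invertible with $\sigma(|T|)\subset[1,e]$, and Theorem \ref{exi&uni3} applies with $\cJ=\cJ_p$.

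\textbf{Non-convergence.} By (\ref{Poisson}), $F^A_t(T)=Se^{\cP_t(g)}$. Its matrix coefficients are
\[\inpr{F^A_t(T)e_n}{e_{n+1}}=e^{(\cP_tg)_n},\]
so weak operator convergence would force $(\cP_tg)_n$ to converge as $t\to\infty$ for each fixed $n$. Take $n=0$ (or $n=1$, depending on the indexing):
\[(\cP_tg)_0=\sum_k \frac{t^ke^{-t}}{k!}\,g_k,\]
the expectation of $g$ at a Poisson$(t)$ random variable. The Poisson distribution concentrates in a window of width $O(\sqrt t)$ around $t$, and $|g_{k}-g_{m}|\le 2^{-2N}|k-m|$ there, so
\[\Big|\sum_k \tfrac{t^ke^{-t}}{k!}g_k-g_{\lfloor t\rfloor}\Big|\lesssim 2^{-2N}\sqrt t+o(1)\to0\]
for $t$ in the dyadic block $[2^{2N},2^{2N+2})$. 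Concretely, Chebyshev's inequality puts mass $1-o(1)$ on $|k-t|\le t^{3/4}$, and on that set the Lipschitz constant of $g$ is about $t^{-1}$; the mass outside contributes at most $o(1)$ because $0\le g\le 1$. The mild boundary effect where the window crosses into the next block is handled because adjacent blocks have comparable slopes. Consequently $(\cP_tg)_0$ approaches $1$ along $t=2^{2N+1}$ and $0$ along $t=2^{2N}$. Therefore $\inpr{F^A_t(T)e_0}{e_1}$ oscillates between values near $e$ and near $1$, and the flow does not converge in the weak operator topology.

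The only real care needed is this Poisson concentration estimate; the rest is routine.
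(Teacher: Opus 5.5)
Your proof is correct and follows the paper's argument: compute $[\log|T|,T]=S(Bg-g)e^g$, get $Bg-g\in\ell^p$ for $p>1$ from the blockwise slopes, reduce weak-operator convergence to convergence of the Poisson averages $\sum_k \frac{t^ke^{-t}}{k!}g_{n+k}$, and use concentration of the Poisson law in a window where $g$ is nearly constant. The only difference is technical. The paper invokes the central limit theorem and checks $g\le 1/4$ (resp.\ $g\ge 3/4$) on the window at $t=2^{2N}-n$ (resp.\ $t=2^{2N+1}-n$). Your Chebyshev bound on $|k-t|\le t^{3/4}$, together with the local slope bound of order $1/t$, gives the slightly stronger statement $(\cP_tg)_n-g_{n+\lfloor t\rfloor}\to 0$.
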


\begin{proof} By definition, $0\leq g\leq 1$ and  
\[(Bg)_n-g_n=\begin{cases}
	2^{-2N},  &  2^{2N}\leq n < 2^{2N+1}, \\
	-2^{-2N-1},  &  2^{2N+1}\leq n< 2^{2N+2}.
\end{cases}  
\]
Thus $Bg-g\in \ell^p$ for all $p>1$, and  $[\log|T|,T]\in \cJ_p$ for all $p>1$. 
Since 
\[\inpr{F^A_t(T)\delta_n}{\delta_{n+1}}=\inpr{e^{\cP_t(g)}\delta_n}{\delta_n},\]
to prove non-convergence, it suffices to show that 
\[\left\{\sum_{k=0}^\infty \frac{t^ke^{-t}}{k!}g_{n+k}\right\}_{t\geq 0}\]
does not converge for some (in fact all) $n\in \N$ as $t\to\infty$. 

We fix $0<\varepsilon <1/4$. 
Let $X_t$ be a random variable whose law is the Poisson distribution with mean $t$. 
Since $\{(X_t-t)/\sqrt{t}\}_{t\geq 0}$ converges in distribution to the standard normal distribution 
as $t\to \infty$, we can choose $a>0$ and $t_0>0$ such that 
$\operatorname{Pr}(|X_t-t|\geq a\sqrt{t})<\varepsilon$ holds for all $t\geq t_0$.

In the following argument, we fix $n\in \N$.   
If $t=2^{2N}-n>t_0$ and $a/2^N\leq 1/8$, we have
\[2^{2N}(1-\frac{1}{8})\leq n+k\leq 2^{2N}(1+\frac{1}{8}),\]
for $|k-t|<a\sqrt{t}$, and $g_{n+k}\leq 1/4$ holds. 
Thus   
\[\sum_{k=0}^\infty \frac{t^ke^{-t}}{k!}g_{n+k}\leq \varepsilon+\frac{1}{4}.\]
Similarly, we can show that  if $t=2^{2N+1}-n>t_0$ with $a/2^{N+1/2}<1/8$, 
we have $g_{n+k}\geq 3/4$ for $|k-t|<a\sqrt{t}$, and 
\[\sum_{k=0}^\infty \frac{t^ke^{-t}}{k!}g_{n+k}\geq\frac{3}{4}(1-\varepsilon).\]
Since 
\[\varepsilon+\frac{1}{4}<\frac{3}{4}(1-\varepsilon),\]
this proves the statement. 
\end{proof}

\begin{definition}Thanks to Lemma \ref{decreasing2} and Lemma \ref{decreasing3}, the following limit exists: 
\[r^\varphi(T):=\lim_{t\to\infty}\|F^\varphi_t(T)\|.\]
We denote it by $r^H(T)$ and $r^A(T)$ in the case of the Haagerup flow and the Aluthge flow respectively. 
Since $F^\varphi_t(T)$ is in the similarity orbit $\cO(T)$, we have $r(T)\leq r^\varphi(T)$. 
Corollary \ref{numerical} shows that $r(T)=r^\varphi(T)$ holds when $T$ is a matrix. 
\end{definition}

\begin{question} Does $r(T)=r^\varphi(T)$ hold when $\varphi$ is operator Lipschitz? 
An affirmative answer to this question would provide a counterpart of Yamazaki's theorem \cite[Theorem 1]{Y2002}.    
\end{question}

We give an example, other than matrices, in favor of an affirmative answer. 

\begin{proposition} Let $T=Sf$ be a left invertible weighted shift, where $S$ is the unilateral shift and $f\in \ell^\infty$ 
is positive and invertible. 
Then  $r(T)=r^A(T)$ holds.  
\end{proposition}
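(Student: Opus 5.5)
The plan is to reduce everything to a statement about averages of $g=\log f\in\ell^\infty$, using the explicit formula (\ref{Poisson}) $F^A_t(Sf)=Se^{\cP_t(g)}$. Since the norm of a weighted shift $Sh$ with $h\geq 0$ is $\|h\|_\infty$, we get
\[\|F^A_t(T)\|=\exp\Bigl(\sup_k(\cP_t g)_k\Bigr),\qquad (\cP_tg)_k=\sum_{j\geq 0}p_t(j)g_{k+j},\quad p_t(j)=\frac{t^je^{-t}}{j!}.\]
On the other side, the standard formula for the spectral radius of a weighted shift (via $\|T^n\|=\sup_k f_kf_{k+1}\cdots f_{k+n-1}$ and Gelfand's formula) gives
\[r(T)=\exp\Bigl(\lim_{n\to\infty}\alpha_n\Bigr),\qquad \alpha_n=\sup_k\frac1n\sum_{j=0}^{n-1}g_{k+j}.\]
Here $n\alpha_n$ is subadditive, so by Fekete's lemma $\alpha_n\to\alpha:=\inf_n\alpha_n$. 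Since $r(T)\leq r^A(T)$ is already known, it suffices to show
\[\limsup_{t\to\infty}\sup_k(\cP_tg)_k\leq \alpha+\varepsilon\quad\text{for every }\varepsilon>0.\]

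Fix $\varepsilon>0$ and choose $n$ with $\alpha_n<\alpha+\varepsilon$. Put $A_n=\frac1n\sum_{j=0}^{n-1}B^j$, so that $A_ng\leq\alpha+\varepsilon$ pointwise. Then decompose
\[\cP_tg=\cP_t(A_ng)+\frac1n\sum_{j=0}^{n-1}\cP_t(I-B^j)g .\]
Since $\cP_t$ is positive and unital on $\ell^\infty$, the first term is bounded above by $\alpha+\varepsilon$. For the second term, $\cP_t$ commutes with $B$, and we have
\[\cP_t(I-B^j)g=\sum_{k}\bigl(p_t(k)-p_t(k-j)\bigr)B^kg,\]
with the convention $p_t(-m)=0$. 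Hence
\[\|\cP_t(I-B^j)g\|_\infty\leq\|g\|_\infty\sum_k|p_t(k)-p_t(k-j)|\leq j\,\|g\|_\infty\sum_k|p_t(k)-p_t(k-1)|.\]

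The main point is to show that the one-step total variation $\sum_k|p_t(k)-p_t(k-1)|$ tends to $0$ as $t\to\infty$. The Poisson weights $k\mapsto p_t(k)$ are unimodal: the ratio $p_t(k)/p_t(k-1)=t/k$ exceeds $1$ exactly when $k<t$. Therefore the sum telescopes to $2\max_kp_t(k)$. This maximum is $O(1/\sqrt t)$ by Stirling's formula, or equivalently by the local central limit theorem.

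Consequently the second term has sup-norm at most $n\|g\|_\infty\cdot O(1/\sqrt t)\to0$ for our fixed $n$. This yields $\limsup_t\sup_k(\cP_tg)_k\leq\alpha+\varepsilon$, and hence $r^A(T)\leq r(T)$. Combined with the known inequality $r(T)\leq r^A(T)$, this gives equality.
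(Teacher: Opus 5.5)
Your proof is correct and follows essentially the same route as the paper. Both arguments use the Poisson formula for $F^A_t(Sf)$ and the averaging operator $\frac1n\sum_{j<n}B^j$, which commutes with $\cP_t$. Both then use the one-step total variation bound $\|\delta_1*p_t-p_t\|_1=O(t^{-1/2})$ to show that $\cP_t g$ and $\cP_t(\frac1n\sum_{j<n}B^j g)$ are asymptotically the same, and finish by positivity.

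The differences are cosmetic. You work with $\sup_k$ directly, which avoids the paper's rescaling to $f\geq 1$. You also justify the $O(t^{-1/2})$ bound explicitly: unimodality of the Poisson weights makes the variation equal to $2\max_k p_t(k)$.
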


\begin{proof} Since $r(sT)=sr(T)$ and $r^A(sT)=sr^A(T)$ for a scalar $s>0$, we may and do assume that $f\geq 1$ to show 
the statement.  
Let $g=\log f\geq 0$. 
For $n\in \N$, we set 
\[M_n=\frac{1}{n}\sum_{k=0}^{n-1}B^k.\]
Then
\begin{align*}
\log r(T) & =\lim_{n\to \infty}\frac{1}{n}\log \|T^n\|=\lim_{n\to \infty}\log \|S^ne^{nM_ng}\|\\
	 & =\lim_{n\to \infty}\frac{1}{n}\log \|e^{nM_ng}\|_\infty=\lim_{n\to\infty}\|M_ng\|_\infty. 
\end{align*}
On the other hand, Eq.(\ref{Poisson}) shows 
\[\log r^A(T)=\lim_{t\to\infty}\|\cP_tg\|_\infty.
\]
Our goal is to show 
\[\lim_{t\to\infty}\|\cP_tg\|_\infty\leq \lim_{n\to\infty}\|M_ng\|_\infty.\] 

Let $p_t$ be the Poisson distribution with mean $t$, which we regard as an element in $\ell^1(\{0\}\cup \N)$.  
Then the Stirling formula shows $\|\delta_1*p_t-p_t\|_1=O(\frac{1}{\sqrt{t}})$ as $t\to\infty$, and so 
\[\|M_n\cP_t-\cP_t\|\leq \|\frac{1}{n}\sum_{k=0}^{n-1}\delta_k*p_t-p_t\|_1\to 0,\quad (t\to\infty).\]
Thus 
\[\lim_{t\to\infty}\|\cP_t g\|_\infty=\lim_{t\to\infty}\|M_n\cP_t g\|_\infty=\lim_{t\to\infty}\|\cP_t M_ng\|_\infty\
\leq \|M_ng\|_\infty,\]
which shows the desired inequality. 
\end{proof}

\begin{question} It is an intriguing problem to investigate potential generalizations of Corollary \ref{sca+nil} by replacing 
the class of nilpotent matrices with suitable subclasses of quasi-nilpotent operators, such as 
the class of quasi-nilpotent weighted shifts or the class of quasi-nilpotent compact operators (Volterra operators).   
\end{question}

\begin{question}
It is an interesting question to determine whether the existence of $Z$ as in Lemma \ref{necessary} is a sufficient condition for the convergence of the flow $\{F^\varphi_t(T)\}_{t\geq 0}$, under the additional assumption $\cJ=\cJ^{(0)}$ and an appropriate regularity condition on $\varphi$. 
Indeed, the above examples suggest that the case $\cJ=\B(H)$ should be excluded; hence $\cJ=\cJ^{(0)}$ is a reasonable additional condition to impose. 
The matrix case further suggests that requiring an appropriate regularity condition on $\varphi$ would render the question more tractable. 
This question may be interesting even for the Aluthge transform under an appropriate formulation. 
\end{question}

\begin{question} Let $M$ be a finite von Neumann algebra and let $T\in M$. 
Haagerup \cite[Section 2]{HS2009} showed that the Haagerup flow $\{F^H_t(T)\}_{t\geq 0}$ converges to a normal element 
in the $*$-distribution sense. 
Does this result remain valid  for more general flows $\{F^\varphi_t(T)\}_{t\geq 0}$? 
\end{question}
\appendix
\section{Appendix}
\subsection{Aluthge flow}
\begin{proposition}\label{Aluthge}
Let $T\in \B(H)$ be a left invertible operator on a Hilbert space $H$. 
Then $\{\Delta_{t/n}^n(T)\}_n$ converges to the Aluthge flow $F^A_t(T)$ in the operator norm 
uniformly in $t$ on compact subsets of $[0,\infty)$. 
\end{proposition}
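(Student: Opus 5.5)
We will treat this as a Lie--Trotter/Euler-type approximation theorem: a one-step consistency estimate, uniform control of the iterates, and a discrete Gronwall argument.

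Let $T$ be left invertible, so $\sigma(|T|)\subset[a,b]$ with $a=m(T)>0$ and $b=\|T\|$. The Aluthge flow $X(t)=F^A_t(T)$ exists on $[0,\infty)$ by Theorem \ref{exi&uni3} with $\cJ=\B(H)$. Here $\log$ is operator Lipschitz on $[a/2,2b]$, since it extends smoothly with compactly supported derivative, and $[\log|T|,T]\in\B(H)$ trivially. By Lemma \ref{decreasing3}, $X(s)\in L\B(H)[a,b]$ for all $s\geq 0$.

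\textbf{Step 1: the iterates stay in a fixed band.} For a left invertible $X$ with polar decomposition $X=V|X|$, we have
\[
\Delta_\lambda(X)=|X|^\lambda V|X|^{1-\lambda}=e^{\lambda\log|X|}Xe^{-\lambda\log|X|}.
\]
We show that $\Delta_\lambda$ does not increase $\|\cdot\|$ and does not decrease $m(\cdot)$.
\begin{itemize}
\item $\|\Delta_\lambda(X)\|\leq\|X\|$ follows from $\||X|^\lambda V|X|^{1-\lambda}\|\leq \|X\|^\lambda\|X\|^{1-\lambda}$.
\item $m(\Delta_\lambda(X))\geq m(X)$ follows from $\|\Delta_\lambda(X)x\|\geq m(X)^\lambda\|V|X|^{1-\lambda}x\|$, since $V$ is an isometry when $X$ is left invertible, together with $\||X|^{1-\lambda}x\|\geq m(X)^{1-\lambda}\|x\|$.
\end{itemize}
Hence every iterate $\Delta_{t/n}^k(T)$ lies in $L\B(H)[a,b]$.

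\textbf{Step 2: local consistency.} For $X\in L\B(H)[a,b]$ and $0\leq h\leq 1$, write $\Phi_h(X)$ for the exact flow map of Eq.(\ref{LODE}) with $\varphi_1=\log$ over time $h$. We aim for
\[
\|\Delta_h(X)-\Phi_h(X)\|\leq Ch^2,
\]
with $C$ depending only on $a$ and $b$. To prove it, set $A=\log|X|$, so $\|A\|\leq\max(|\log a|,|\log b|)$. Then $\Delta_h(X)=e^{hA}Xe^{-hA}=X+h[A,X]+R_1$ with $\|R_1\|\leq C_1h^2$, using $\|X\|\leq b$. On the other side, $\Phi_h(X)=X+h[\log|X|,X]+R_2$ with
\[
\|R_2\|\leq \int_0^h\|[\log|X(s)|,X(s)]-[\log|X|,X]\|\,ds\leq L\int_0^h\|X(s)-X\|\,ds\leq LMh^2.
\]
Here $L$ is the Lipschitz constant of $Y\mapsto[\log|Y|,Y]$ on $L\B(H)[a,b]$, which is finite by operator Lipschitzness of $\log\circ\sqrt{\cdot}$ on $[a^2,b^2]$, and $M$ is the bound for the vector field there.

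\textbf{Step 3: stability and telescoping.} We need $\Phi_h$ to be Lipschitz on $L\B(H)[a,b]$ with constant $e^{Lh}$; this follows from Gronwall applied to two solutions, both of which remain in the band by Lemma \ref{decreasing3}. With $h=t/n$, we telescope:
\[
\Delta_h^n(T)-\Phi_h^n(T)=\sum_{k=0}^{n-1}\Bigl(\Phi_h^{n-1-k}\bigl(\Delta_h(\Delta_h^k T)\bigr)-\Phi_h^{n-1-k}\bigl(\Phi_h(\Delta_h^k T)\bigr)\Bigr).
\]
Step 1 puts every intermediate point in the band, so each term is bounded by $e^{L(n-1-k)h}Ch^2$. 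Summing gives
\[
\|\Delta_{t/n}^n(T)-F^A_t(T)\|\leq nCe^{Lt}\,\frac{t^2}{n^2}=\frac{Ct^2e^{Lt}}{n}.
\]
This bound is uniform for $t$ in compact sets.

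\textbf{Main obstacle.} The delicate point is the Lipschitz estimate for $Y\mapsto\log|Y|$ in operator norm, which is used in Steps 2 and 3. We would handle it by writing
\[
\log|Y|=\tfrac12\log(Y^*Y),\qquad \sigma(Y^*Y)\subset[a^2,b^2].
\]
Since $\tfrac12\log$ extends to a function on $\R$ whose derivative is the Fourier transform of a finite measure, it is operator Lipschitz on $[a^2,b^2]$ by the criterion of \cite{AP2016}. Combined with the inequality $\|Y_1^*Y_1-Y_2^*Y_2\|\leq 2b\|Y_1-Y_2\|$, this gives the required constant $L$.
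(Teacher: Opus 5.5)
Your proof is correct and follows essentially the same route as the paper. Both arguments rest on three ingredients: invariance of $L\B(H)[m(T),\|T\|]$ under $\Delta_\lambda$ and under the flow, an $O(h^2)$ one-step consistency estimate, and the Lipschitz constant $L$ of $X\mapsto[\log|X|,X]$ on that band, which together give an $O(1/n)$ error uniformly for $t\in[0,\tau]$. The only difference is cosmetic: the paper compares both $\Delta_{t/n}^k(T)$ and $F^A_{kt/n}(T)$ to an explicit Euler step and runs a discrete Gronwall recursion with factor $1+Lt/n$, whereas you compare $\Delta_h$ directly with the exact flow map $\Phi_h$ and telescope using its $e^{Lh}$-Lipschitz bound.
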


\begin{proof} Let $M=\|T\|=\max\sigma(|T|)$ and let $m=m(T)=\min\sigma(|T|)>0$. 
Then it is well known that $\sigma(|\Delta_\lambda(T)|)\subset [m,M]$ holds. 
Indeed, let $T=U|T|$ be the polar decomposition and let $x\in H$ be a unit vector. 
Then we have 
\begin{align*}
\|\Delta_\lambda(T)x\|^2 &=\inpr{|T|^{2\lambda}U|T|^{1-\lambda}x}{U|T|^{1-\lambda}x}\leq
 M^{2\lambda}\inpr{U|T|^{1-\lambda}x}{U|T|^{1-\lambda}x} \\
 &\leq  M^{2\lambda}\inpr{|T|^{1-\lambda}x}{|T|^{1-\lambda}x} \leq  M^2. 
\end{align*}
Similarly, we can show $\|\Delta_\lambda(T)x\|\geq m$, and so $\sigma(|\Delta_\lambda(T)|)\subset [m,M]$. 

We fix $\tau>0$ and let $0\leq t\leq \tau$. 
Let $L$ be the Lipschitz constant of $[\log |X|,X]$ on $L\B(H)[m,M]$. 
We take a sufficiently large $n\in \N$, and set $Y_k=\Delta_{t/n}^k(T)$, $Z_k=F^A_{kt/n}(T)$ for $k=0,1,2,\cdots,n$. 
Then 
\[Y_{k+1}=Y_k+\frac{t}{n}[\log |Y_k|,Y_k]+\varepsilon_k \] 
with 
\[\|\varepsilon_k\|=\left\|e^{\frac{t}{n}\log |Y_k|}Y_ke^{-\frac{t}{n}\log |Y_k|}-Y_k-\frac{t}{n}[\log |Y_k|,Y_k]\right\|\leq \frac{C_1t^2}{n^2},\]
where $C_1>0$ depends only on $m$,$M$, and $\tau$. 
We have 
\[Z_{k+1}=Z_k+\frac{t}{n}[\log|Z_k|,Z_k]+\delta_k\]
with 
\[\|\delta_k\|=\left\|\int_{kt/n}^{(k+1)t/n}([\log|F^A_s(T)|,F^A_s(T)]-[\log|Z_k|,Z_k])ds\right\|\leq\frac{C_2t^2}{n^2},\]
where $C_2>0$ depends only on $m$, $M$, $\tau$, and $L$. 
Thus 
\[\|Y_{k+1}-Z_{k+1}\|\leq \left(1+\frac{Lt}{n}\right)\|Y_k-Z_k\|+(C_1+C_2)\frac{t^2}{n^2},\]
and 
\[\|\Delta_{t/n}^n(T)-F^A_t(T)\|=\|Y_n-Z_n\|\leq \frac{(C_1+C_2)t(e^{Lt}-1)}{Ln}\leq \frac{(C_1+C_2)\tau (e^{L\tau}-1)}{Ln}.\]
\end{proof}

\subsection{Haagerup flow}
\begin{proposition}\label{gradient} The Haagerup flow on $M_n(\C)$ is the gradient flow with respect to 
	the energy function $E(X)=\|[X^*,X]\|_2^2/4=\Tr([X^*,X]^2)/4$. 
\end{proposition}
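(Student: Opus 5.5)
We need to show that the gradient of $E$, with respect to the real inner product $\Re\Tr(A^*B)$ on $\M_n$, is $-[[X^*,X],X]$, up to the sign convention for gradient flows. Equivalently, the Haagerup equation reads $dX/dt=-\nabla E(X)$. The plan is to compute the directional derivative of $E$ at $X$ in an arbitrary direction $H\in\M_n$ and then read off the Riesz representative.

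\textbf{Step 1: directional derivative.} Put $C=[X^*,X]$, which is self-adjoint. The first-order variation of $C$ is
\[\delta C=[H^*,X]+[X^*,H].\]
Hence
\[\frac{d}{ds}E(X+sH)\Big|_{s=0}=\frac{1}{2}\Tr\bigl(C\,([H^*,X]+[X^*,H])\bigr).\]

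\textbf{Step 2: cyclicity.} Using the identity $\Tr(C[A,B])=\Tr([B,C]A)$, we rewrite the two terms:
\[\Tr(C[X^*,H])=\Tr([C,X^*]H),\qquad \Tr(C[H^*,X])=\Tr(H^*[X,C]).\]
Since $C=C^*$, the two terms are complex conjugates of each other. Indeed, $[C,X^*]^*=[X,C]$, so
\[\Tr([C,X^*]H)=\Tr\bigl(([X,C])^*H\bigr)=\overline{\Tr(H^*[X,C])}.\]
Their sum is therefore $2\Re\Tr\bigl([X,C]^*H\bigr)$, and the directional derivative equals
\[\Re\Tr\bigl([X,C]^*H\bigr)=\inpr{[X,C]}{H}_{\R}.\]

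\textbf{Step 3: conclusion.} Step 2 gives $\nabla E(X)=[X,[X^*,X]]=-[[X^*,X],X]$. Consequently the Haagerup equation (\ref{HaagerupODE}) is exactly $dX/dt=-\nabla E(X)$, which is the gradient (descent) flow of $E$. The conjugation bookkeeping in Step 2 is the only delicate point. As a consistency check, $E$ decreases along the flow, with
\[\frac{dE}{dt}=-\|[[X^*,X],X]\|_2^2.\]
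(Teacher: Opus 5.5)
Your proposal is correct and follows the same route as the paper. You differentiate $E(X+sH)$ to get $\frac{1}{2}\Tr([X^*,X]([H^*,X]+[X^*,H]))$, use cyclicity of the trace to rewrite this as $\Re\Tr([X,[X^*,X]]^*H)$, and read off $\nabla E(X)=-[[X^*,X],X]$ for the real inner product $\Re\Tr(B^*A)$; your explicit conjugation bookkeeping just spells out the step the paper compresses into one line.
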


\begin{proof}
	Recall that the standard real inner product of $M_n(\C)$ is given by 
	$\inpr{X}{Y}=\Re\Tr(Y^*X)$.  	
	We have 
	\begin{align*}
		\frac{dE(X+tY)}{dt}{\bigg|}_{t=0} &=\frac{1}{2}\Tr([X^*,X]([Y^*,X]+[X^*,Y]))\\
		&=\Re \Tr([X,[X^*,X]]Y^*)\\
		&=-\inpr{[[X^*,X],X]}{Y},
	\end{align*}
	which shows that Eq.(\ref{HaagerupODE}) is the gradient equation for $E(X)$. 
\end{proof}

\subsection{Divided differences}
Let $f\in C^n[a,b]$. 
For $k=0,1,\cdots, n$, the $k$-th divided difference $f^{[k]}\in C[a,b]^{k+1}$ is inductively defined as follows. 
We set $f^{[0]}(x)=f(x)$. 
Assume that $f^{[k-1]}$ is defined for $k\leq n$. 
If there exist $1\leq i<j\leq k+1$ with $x_i\neq x_j$, we set $f^{[k]}$ to be
\begin{align*}
	 &f^{[k]}(x_1,x_2,\cdots,x_{k+1})  \\
	&= \frac{f^{[k-1]}(x_1,\cdots,x_{i-1},x_{i+1},\cdots, x_{k+1})-f^{[k-1]}(x_1,\cdots,x_{j-1},x_{j+1},\cdots,x_{k+1})}{x_j-x_i},
\end{align*}
and if $x_1=x_2=\cdots=x_{k+1}$, we set 
\[f^{[k]}(x_1,x_2,\cdots,x_{k+1}) =\frac{f^{(k)}(x_1)}{k!}.\]
It turns out that $f^{[k]}$ is well-defined, symmetric, and continuous on $[a,b]^{k+1}$. 
When $\{x_i\}_{i=1}^{n+1}$ are distinct, we have 
\[f^{[n]}(x_1,x_2,\cdots,x_{n+1})=\sum_{i=1}^{n+1}f(x_i)\prod_{1\leq j\leq n+1,\;j\neq i}\frac{1}{x_i-x_j}.\] 
The reader is referred to \cite[Lemma 2.2.4]{H2010} and \cite[Chapter 6.1.14]{HJ1991} for the proof of these properties.

\begin{lemma}\label{extension} Let $f\in C[0,r]$ with $f(0)=0$ and assume that the restriction of $f$ to $(0,r]$ is in $C^n((0,r])$.  
Then $f^{[n]}\in C((0,r]^{n+1})$ extends to a continuous function $F\in C((0,r]^n\times [0,r])$, and 
$F(x_1,x_2,\cdots,x_n,0)=(x^{-1}f)^{[n-1]}(x_1,x_2,\cdots,x_n)$ holds. 
\end{lemma}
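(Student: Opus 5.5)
The natural route is a Hermite--Genocchi style integral representation of divided differences, applied to $g(x)=f(x)/x$. The relevant identity is
\[f(x)=x\,g(x),\qquad g(x)=\frac{f(x)}{x}\quad (x\in(0,r]).\]
Here $g\in C^n((0,r])$ but is not assumed continuous at $0$. The key algebraic fact is the Leibniz rule for divided differences of a product with the linear function $x$. For distinct points with $x_{n+1}=0$, the explicit formula in the appendix gives
\[f^{[n]}(x_1,\dots,x_n,0)=\sum_{i=1}^n\frac{f(x_i)}{x_i}\prod_{j\neq i, j\le n}\frac{1}{x_i-x_j}+f(0)\prod_{j}\frac{1}{0-x_j}.\]
Since $f(0)=0$, the last term vanishes, and what remains is exactly $g^{[n-1]}(x_1,\dots,x_n)$. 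So the identity $F(x_1,\dots,x_n,0)=(x^{-1}f)^{[n-1]}(x_1,\dots,x_n)$ holds on the dense set of distinct points, and the task reduces to continuity.

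Define $F$ on $(0,r]^n\times[0,r]$ in two pieces. For $x_{n+1}>0$ set $F=f^{[n]}$. For $x_{n+1}=0$ set $F=g^{[n-1]}(x_1,\dots,x_n)$, which is continuous on $(0,r]^n$ by the appendix properties applied to $g\in C^{n-1}$ on compact subintervals $[\delta,r]$. To show $F$ is continuous, I would establish the identity
\[f^{[n]}(x_1,\dots,x_n,y)=g^{[n-1]}(x_1,\dots,x_n)+y\,g^{[n]}(x_1,\dots,x_n,y),\qquad y>0,\]
which is the Leibniz rule $(xg)^{[n]}=x_{n+1}g^{[n]}+g^{[n-1]}(x_1,\dots,x_n)$. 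I would verify it for distinct points from the explicit formula, or by induction using the recursive definition, and then extend it to all of $(0,r]^{n+1}$ by continuity. This reduces continuity of $F$ at points $(x^0,0)$, with $x^0\in(0,r]^n$, to showing $y\,g^{[n]}(x_1,\dots,x_n,y)\to 0$ as $(x,y)\to(x^0,0^+)$.

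The obstacle is this last limit, because $g^{[n]}$ may blow up as $y\to0$. I would handle it by applying the recursion once more, choosing the pair $(x_1,y)$, where $x_1\ge\delta>0$ stays away from $y$:
\[g^{[n]}(x_1,\dots,x_n,y)=\frac{g^{[n-1]}(x_1,\dots,x_n)-g^{[n-1]}(x_2,\dots,x_n,y)}{x_1-y}.\]
The first term in the numerator is bounded near $x^0$. For the second term it suffices to show that $y\,g^{[n-1]}(x_2,\dots,x_n,y)\to0$, which is the same statement with $n$ replaced by $n-1$, applied to points bounded away from $0$. Repeating this down to $n=0$ leaves the claim $y\,g(y)=f(y)\to f(0)=0$, which holds by continuity of $f$ at $0$ with $f(0)=0$. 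Throughout the induction the denominators $x_i-y$ stay bounded away from $0$, so $y$ times each term tends to $0$. This completes the continuity argument, and the formula for $F(x_1,\dots,x_n,0)$ follows from the density argument in the first paragraph.
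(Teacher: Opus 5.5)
Your proof is correct, and it uses the same mechanism as the paper: the recursion that removes the pair $(x_1,x_{n+1})$, whose denominator stays bounded away from $0$ because $x_1$ stays away from the variable tending to $0$, followed by induction down to the continuity of $f$ at $0$.

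The packaging is different. The paper applies this recursion directly to $f^{[n]}$ and gets the continuous extension by induction on $n$; that step never uses $f(0)=0$. Only afterwards does it identify $F(x_1,\dots,x_n,0)$, by letting $x_{n+1}\to 0$ in the explicit formula at distinct points. You instead pass to $g=f/x$ and split off $g^{[n-1]}(x_1,\dots,x_n)$ with the Leibniz rule $f^{[n]}(x_1,\dots,x_n,y)=g^{[n-1]}(x_1,\dots,x_n)+y\,g^{[n]}(x_1,\dots,x_n,y)$. You then run the induction on the auxiliary statement $y\,g^{[k]}(z_1,\dots,z_k,y)\to 0$. In your version, $f(0)=0$ enters exactly in the base case $y\,g(y)=f(y)\to 0$, and the boundary value is built into your definition of $F$.

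What each buys: your version makes it clear why the boundary value is $(x^{-1}f)^{[n-1]}$ and exactly where $f(0)=0$ is needed. The paper's version shows slightly more: the extension exists for every $f\in C[0,r]$ of class $C^n$ on $(0,r]$. In that generality the boundary value is $(x^{-1}f)^{[n-1]}(x_1,\dots,x_n)+(-1)^n f(0)/(x_1\cdots x_n)$.

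Two small remarks. The Hermite--Genocchi representation you announce at the start is never used. Once continuity of your $F$ is proved, the density argument in your first paragraph is only a consistency check, since the continuous extension from the dense set $(0,r]^{n+1}$ is unique.
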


\begin{proof} The existence of the continuous extension $F$ follows from induction on $n$ by using the recursion   
\[f^{[n]}(x_1,x_2,\cdots,x_{n+1})=\frac{f^{[n-1]}(x_1,\cdots, x_n)-f^{[n-1]}(x_2,\cdots,x_{n+1})}{x_1-x_{n+1}} \]
because we may assume that $x_{n+1}$ is sufficiently close to 0 and $x_{n+1}\neq x_1$ to demonstrate the desired continuous extension. 

Once we know the existence of a continuous extension $F$, we may assume that $\{x_i\}_{i=1}^n$ are distinct to evaluate 
$F(x_1,\cdots, x_n,0)$. 
Thus 
\begin{align*}
\lefteqn{F(x_1,\cdots, x_n,0)=\lim_{x_{n+1}\to 0}f^{[n]}(x_1,x_2,\cdots, x_{n+1})} \\
 &=\lim_{x_{n+1}\to 0}\sum_{i=1}^{n+1}f(x_i)\prod_{1\leq j\leq n+1,\;j\neq i}\frac{1}{x_i-x_j} 
 =\sum_{i=1}^n x_i^{-1}f(x_i)\prod_{1\leq j\leq n,\;j\neq i}\frac{1}{x_i-x_j}\\
& =(x^{-1}f)^{[n-1]}(x_1,x_2,\cdots,x_n).
\end{align*}
\end{proof}

\subsection{Interpolation polynomials}\label{interpolation}
Let $A\in \M_n$ be a self-adjoint matrix with eigenvalues $b\geq \lambda_1\geq \lambda_2\geq \cdots \geq \lambda_n\geq a$. 
Then the Newton form of the Hermite interpolation polynomial for $f\in C^{n-1}[a,b]$ at the eigenvalues of $A$ is defined by 
\[p(t)=f(\lambda_1)+\sum_{k=1}^{n-1}f^{[k]}(\lambda_1,\cdots,\lambda_{k+1})\prod_{j=1}^k(t-\lambda_j).\]
It is known that $f(\lambda_i)=p(\lambda_i)$ holds for every $1\leq i\leq n$, and $f(A)=p(A)$ holds. 
Moreover, if $\lambda_i$ has a multiplicity $m$, we have $p^{(j)}(\lambda_i)=f^{(j)}(\lambda_i)$ for $0\leq j\leq m-1$  
(see \cite[Remark 1.6]{Hig2008}, \cite[Chapter 6.1.14]{HJ1991}). 
Since the eigenvalues are continuous functions of $A$, so are the coefficients of $p(x)$. 
The existence of a polynomial with such properties is used in a crucial way in the proof of Theorem \ref{Cn-1}.     
For concrete computation, we show that the coefficients can be expressed as manifestly symmetric functions of the eigenvalues, as follows:

\begin{proposition}\label{symmetric formula}
Let $A\in \M_n$ be a self-adjoint matrix with eigenvalues $b\geq \lambda_1\geq \lambda_2\geq \cdots \geq \lambda_n\geq a$,  
and let $f\in C^{n-1}[a,b]$. 
We set 
\[p_f(t)=\sum_{k=0}^{n-1}\sum_{j=0}^{n-1-k}(-1)^{n-1-j-k}e_{n-1-j-k}(\lambda_1,\cdots,\lambda_n)
(x^jf)^{[n-1]}(\lambda_1,\cdots,\lambda_n)t^k,\]
where $e_k$ is the $k$-th elementary symmetric polynomial with $n$ variables.  
Then $p_f$ coincides with the Hermite interpolation polynomial for $f\in C^{n-1}[a,b]$ at the eigenvalues of $A$.  
\end{proposition}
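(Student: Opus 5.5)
The plan is to compare $p_f$ with the Newton form $p$ of the Hermite interpolation polynomial. Both have degree at most $n-1$ and depend continuously on $(\lambda_1,\dots,\lambda_n)$, since divided differences are continuous on $[a,b]^n$ when $f\in C^{n-1}[a,b]$. So it suffices to prove $p_f=p$ when the $\lambda_i$ are distinct. The general case then follows by perturbing $A$ to matrices with simple spectrum and taking limits of coefficients. For distinct nodes the Hermite polynomial is the Lagrange polynomial. The task therefore reduces to showing $p_f(\lambda_i)=f(\lambda_i)$ for all $i$, because a polynomial of degree at most $n-1$ is determined by its values at $n$ distinct points.

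To evaluate $p_f$, I will use the distinct-node formula for divided differences:
\[(x^jf)^{[n-1]}(\lambda_1,\dots,\lambda_n)=\sum_{i=1}^n \lambda_i^j f(\lambda_i)\prod_{l\neq i}\frac{1}{\lambda_i-\lambda_l}.\]
Substituting this gives $p_f(t)=\sum_i f(\lambda_i)\,\omega_i^{-1}\,Q_i(t)$, where $\omega_i=\prod_{l\neq i}(\lambda_i-\lambda_l)$ and
\[Q_i(t)=\sum_{k=0}^{n-1}\sum_{j=0}^{n-1-k}(-1)^{n-1-j-k}e_{n-1-j-k}(\lambda)\lambda_i^j t^k.\]
The key step is to recognise $Q_i$ as the Lagrange numerator $\prod_{l\neq i}(t-\lambda_l)$. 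Write $\pi(t)=\prod_l(t-\lambda_l)=\sum_{m=0}^n(-1)^{n-m}e_{n-m}t^m$. Then synthetic division gives
\[\frac{\pi(t)-\pi(\lambda_i)}{t-\lambda_i}=\sum_{m=1}^{n}(-1)^{n-m}e_{n-m}\sum_{k+j=m-1}t^k\lambda_i^j.\]
Setting $m=j+k+1$, the coefficient is $(-1)^{n-1-j-k}e_{n-1-j-k}$, which is exactly $Q_i(t)$. Since $\pi(\lambda_i)=0$, we get $Q_i(t)=\pi(t)/(t-\lambda_i)=\prod_{l\neq i}(t-\lambda_l)$. Hence $p_f(t)=\sum_i f(\lambda_i)\prod_{l\neq i}\frac{t-\lambda_l}{\lambda_i-\lambda_l}$, which is the Lagrange interpolation polynomial and therefore equals $p$.

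The main obstacle is the passage to repeated eigenvalues. Two points need care. First, $p_f$ is continuous in $\lambda$, which holds because each $(x^jf)^{[n-1]}$ is continuous on $[a,b]^n$ (note $x^jf\in C^{n-1}$). Second, the Newton-form $p$ is continuous as well, since its coefficients are also divided differences of $f$ evaluated at the ordered eigenvalues. Two polynomials of degree at most $n-1$ whose coefficients are continuous functions of $\lambda$ and agree on the dense set of distinct $\lambda$ agree everywhere. This yields $p_f=p$ in general, and with it the interpolation properties $f(A)=p_f(A)$ and the matching of derivatives at multiple eigenvalues quoted from \cite{Hig2008}, \cite{HJ1991}.
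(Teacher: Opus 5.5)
Your proof is correct, but it follows a different route from the paper's.

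\emph{The paper's route.} The paper never evaluates $p_f$ at the nodes. It first proves the matrix identity $f(A)=p_f(A)$:
for monomials, by induction using the Cayley--Hamilton theorem (it shows $A\,p_f(A)=p_{xf}(A)$);
for general $f\in C^{n-1}[a,b]$, by approximating $f$ with polynomials whose first $n-1$ derivatives converge uniformly.
Only then, for simple spectrum, does it identify $p_f$ with the Newton polynomial. It does this through uniqueness of a polynomial $q$ of degree at most $n-1$ with $q(A)=f(A)$. It finishes with the same continuity-in-$\lambda$ argument you use.

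\emph{Your route.} You replace the first two steps with a purely scalar computation. The distinct-node formula for divided differences, combined with the synthetic-division identity $\pi(t)/(t-\lambda_i)=\sum_{j+k\le n-1}(-1)^{n-1-j-k}e_{n-1-j-k}\lambda_i^j t^k$, exhibits $p_f$ directly as the Lagrange polynomial.

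\emph{Comparison.} Your argument is shorter and explains where the formula comes from. It also avoids the $C^{n-1}$ approximation step, together with the continuity of $(x^jf)^{[n-1]}$ in $f$ with respect to the $C^{n-1}$ topology that this step implicitly needs. In exchange, the paper's route yields $f(A)=p_f(A)$ directly for every self-adjoint $A$, repeated eigenvalues included. In your approach, that identity and the derivative matching at multiple eigenvalues are instead inherited from the cited properties of the Hermite interpolation polynomial once $p_f=p$ is established.
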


\begin{proof} We first show that $f(A)=p_f(A)$ holds for every polynomial $f$. 
Indeed, for $f=1$, the term $(x^jf)^{[n-1]}(\lambda_1,\cdots,\lambda_n)$ survives only if $j=n-1$, which forces $k=0$. 
Thus we get $p_1=1$, and $f(A)=p_1(A)$ holds. 
Assume that the equality holds for a polynomial $f$.  
Then thanks to the Cayley-Hamilton theorem,  
\begin{align*}
Af(A) &=\sum_{k=0}^{n-1}\sum_{j=0}^{n-1-k}(-1)^{n-1-j-k}e_{n-1-j-k}(\lambda_1,\cdots,\lambda_n)(x^jf)^{[n-1]}(\lambda_1,\cdots,\lambda_n)A^{k+1} \\
 &=\sum_{k=0}^{n-1}\sum_{j=0}^{n-k}(-1)^{n-j-k}e_{n-j-k}(\lambda_1,\cdots,\lambda_n)(x^jf)^{[n-1]}(\lambda_1,\cdots,\lambda_n)A^{k} \\
 &+f^{[n-1]}(\lambda_1,\cdots,\lambda_n)A^{n}- \sum_{j=0}^{n}(-1)^{n-j}e_{n-j}(\lambda_1,\cdots,\lambda_n)(x^jf)^{[n-1]}(\lambda_1,\cdots,\lambda_n)\\
 &=\sum_{k=0}^{n-1}\sum_{j=0}^{n-1-k}(-1)^{n-1-j-k}e_{n-1-j-k}(\lambda_1,\cdots,\lambda_n)(x^{j+1}f)^{[n-1]}(\lambda_1,\cdots,\lambda_n)A^{k} \\
 &+f^{[n-1]}(\lambda_1,\cdots,\lambda_n)\sum_{k=0}^n(-1)^{n-k}e_{n-k}(\lambda_1,\cdots,\lambda_n)A^{k}-(\chi_Af)^{[n-1]}(\lambda_1,\cdots,\lambda_n)\\
 &=p_{xf}(A),
\end{align*}
where $\chi_A$ is the characteristic polynomial of $A$. 
Thus the equality holds for any monomial by induction, and for any polynomial by linearity. 

For general $f\in C^{n-1}[a,b]$, we can choose a sequence of polynomials $\{q_l\}_l$ such that $q_l^{(k)}$ converges to $f^{(k)}$ 
uniformly on $[a,b]$ for all $0\leq k\leq n-1$. 
Then we get 
\[f(A)=\lim_{l\to\infty}q_l(A)=\lim_{l\to\infty}p_{q_l}(A)=p_f(A).\]

When $\{\lambda_i\}_{i=1}^n$ are distinct, a polynomial $q$ with $f(A)=q(A)$ whose degree is at most $n-1$ is unique, 
and $p_f$ coincides with the Hermite (in fact, Newton) interpolation polynomial $p$. 
Since the coefficients of both polynomials are continuous functions of $\{\lambda_i\}_{i=1}^n$, 
the two polynomials coincide in the general case. 
\end{proof}



\begin{thebibliography}{99}



\bibitem{AP2016}
Aleksandrov, A. B.; Peller, V. V., 
\textit{Operator Lipschitz functions.} 
Uspekhi Mat. Nauk \textbf{71} (2016), no. 4(430), 3--106; 
translation in Russian Math. Surveys \textbf{71} (2016), no. 4, 605--702. 


\bibitem{A1990} Aluthge, Ariyadasa, 
On p-hyponormal operators for $0<p<1$. 
\textit{Integral Equations Operator Theory}, 
\textbf{13} (1990), no. 3, 307--315.




\bibitem{AH1994} Ando, Tsuyoshi; Hiai, Fumio, 
Log majorization and complementary Golden-Thompson type inequalities. 
\textit{Second Conference of the International Linear Algebra Society (ILAS) (Lisbon, 1992)}, 
Linear Algebra Appl. 197/198 (1994), 113--131.


\bibitem{APS2007} Antezana, Jorge; Pujals, Enrique R.; Stojanoff, Demetrio,  
Convergence of the iterated Aluthge transform sequence for diagonalizable matrices. 
\textit{Adv. Math.} \textbf{216} (2007), no. 1, 255--278.

\bibitem{APS2011} Antezana, Jorge; Pujals, Enrique R.; Stojanoff, Demetrio,  
The iterated Aluthge transforms of a matrix converge. 
\textit{Adv. Math.} \textbf{226} (2011), no. 2, 1591--1620.


\bibitem{B1997} Bhatia, Rajendra. 
\textit{Matrix analysis.}
Grad. Texts in Math., 169, Springer-Verlag, New York, 1997.



\bibitem{Br1994}
Bruckner, Andrew, 
\textit{Differentiation of real functions.
Second edition}.  
CRM Monogr. Ser., 5
American Mathematical Society, Providence, RI, 1994. 



\bibitem{CJL2005} Cho, Muneo; Jung, Il Bong; Lee, Woo Young, 
On Aluthge transforms of $p$-hyponormal operators. 
\textit{Integral Equations Operator Theory}, \textbf{53} (2005), no. 3, 321--329.

\bibitem{DK1974}
Dalec'kii, Ju. L.; Krein, M. G.. 
\textit{Stability of solutions of differential equations in Banach space.} 
Translated from the Russian by S. Smith
Transl. Math. Monogr., Vol. \textbf{43}, 
American Mathematical Society, Providence, RI, 1974.






\bibitem{DS2009} Dykema, Ken; Schultz, Hanne, 
Brown measure and iterates of the Aluthge transform for some operators arising from measurable actions. 
\textit{Trans. Amer. Math. Soc.} \textbf{361} (2009), no. 12, 6583--6593.



\bibitem{GK1969} Gohberg, I. C.; Krein, M. G. 
\textit{Introduction to the theory of linear nonselfadjoint operators.} 
Translated from the Russian by A. Feinstein, 
Transl. Math. Monogr., Vol. 18, 
American Mathematical Society, Providence, RI, 1969. 

\bibitem{HS2009} Haagerup, Uffe; Schultz, Hanne, 
Invariant subspaces for operators in a general II$_1$-factor. 
\textit{Publ. Math. Inst. Hautes \'Etudes Sci.} No. \textbf{109} (2009), 19--111.

\bibitem{Hal1982}
Halmos, Paul Richard, 
\textit{A Hilbert space problem book,} 
Second edition, Encyclopedia Math. Appl., 17
Grad. Texts in Math., 19
Springer-Verlag, New York-Berlin, 1982. 


\bibitem{de la Harpe1972} de la Harpe, Pierre, 
\textit{Classical Banach-Lie algebras and Banach-Lie groups of operators in Hilbert space.} 
Lecture Notes in Math., Vol. 285 Springer-Verlag, Berlin-New York, 1972.

\bibitem{H2010} Hiai, Fumio, 
Matrix analysis: matrix monotone functions, matrix means, and majorization. 
\textit{Interdiscip. Inform. Sci.} \textbf{16} (2010), no. 2, 139--248.

\bibitem{HP2014} Hiai, Fumio; Petz, D\'enes, 
\textit{Introduction to matrix analysis and applications.} 
Universitext Springer, Cham; Hindustan Book Agency, New Delhi, 2014. 

\bibitem{Hig2008}
Higham, Nicholas J., 
\textit{Functions of matrices.  
Theory and computation.}
Society for Industrial and Applied Mathematics (SIAM), Philadelphia, PA, 2008. 

\bibitem{HJ1991} 
Horn, Roger A.; Johnson, Charles R.
\textit{Topics in matrix analysis}.
Cambridge University Press, Cambridge, 1991. 

\bibitem{K1973} 
Kato, Tosio, 
Continuity of the map $S\mapsto |S|$ for linear operators.
\textit{Proc. Japan Acad.} \textbf{49} (1973), 157--160.



\bibitem{K1992} Kosaki, Hideki, 
Unitarily invariant norms under which the map $A\to |A|$ is Lipschitz continuous. 
\textit{Publ. Res. Inst. Math. Sci.} \textbf{28} (1992), no. 2, 299--313.

\bibitem{OY2025}
Osaka, Hiroyuki; Yamazaki, Takeaki, 
Limit of iteration of the induced Aluthge transformations of centered operators. 
\textit{Trans. Amer. Math. Soc.} \textbf{378} (2025), no. 10, 6857–-6884.

\bibitem{PS2011} Potapov, Denis; Sukochev, Fedor, 
Operator-Lipschitz functions in Schatten-von Neumann classes. 
\textit{Acta Math.} \textbf{207} (2011), no. 2, 375--389. 


\bibitem{S2005} Simon, Barry. 
\textit{Trace ideals and their applications.} 
Second edition, Math. Surveys Monogr., 120, 
American Mathematical Society, Providence, RI, 2005.

\bibitem{Ta2008}
Tam, Tin-Yau, 
$\lambda$-Aluthge iteration and spectral radius. 
\textit{Integral Equations Operator Theory}, \textbf{60} (2008), no. 4, 591--596.

\bibitem{Te2012} Teschl, Gerald, 
\textit{Ordinary differential equations and dynamical systems.} 
Grad. Stud. Math., 140
American Mathematical Society, Providence, RI, 2012. 


\bibitem{Y2002} Yamazaki, Takeaki, 
\textit{An expression of spectral radius via Aluthge transformation.}
\textit{Proc. Amer. Math. Soc.} \textbf{130} (2002), 1131--1137. 
\end{thebibliography}
\end{document}